 \numberwithin{equation}{section}
\theoremstyle{plain}
\newtheorem{thm}{Theorem}[section]
\newtheorem{cor}[thm]{Corollary}
\newtheorem{lem}[thm]{Lemma}
\newtheorem{prop}[thm]{Proposition}
\theoremstyle{definition}
\newtheorem{defn}[thm]{Definition}
\theoremstyle{remark}
\newtheorem{rem}[thm]{Remark}
\newcommand{\N}{\mathbb{N}}
\newcommand{\R}{\mathbb{R}}
\newcommand{\E}{\mathbb{E}}
\newcommand{\bp}{\begin{proof}[\ensuremath{\mathbf{Proof}}]}
\newcommand{\bs}{\begin{proof}[\ensuremath{\mathbf{Solution}}]}
\newcommand{\ep}{\end{proof}}
\newcommand{\id}{\text{id}_{\R}}
\newcommand{\be}{\begin{equation}}
\newcommand{\ee}{\end{equation}}
\newcommand{\sgn}{\text{sgn}}
\begin{document}

\title{A trajectory map for the pressureless Euler equations}

\author{Ryan Hynd}

\maketitle 

\begin{abstract}
We consider the dynamics of a collection of particles that interact pairwise and are restricted to move along the real line.  Moreover, we focus on the situation in which particles undergo perfectly inelastic collisions when they collide.  The equations of motion are a pair of partial differential equations for the particles' mass distribution and local velocity.  We show that solutions of this system exist for given initial conditions by rephrasing these equations in Lagrangian coordinates and then by solving for the associated trajectory map. 
\end{abstract}


\section{Introduction}
In this paper, we will study the dynamics of a collection of particles which interact pairwise and which moves along the real line.  We will also suppose that when particles collide, they undergo perfectly inelastic collisions.   The equations of motion for this type of physical system are the {\it pressureless Euler equations}
\begin{equation}\label{PEE}
\begin{cases}
\hspace{.272in}\partial_t\rho +\partial_x(\rho v)=0\\
\partial_t(\rho v) +\partial_x(\rho v^2)=-\rho(W'*\rho),
\end{cases}
\end{equation}
which hold on $\R\times (0,\infty)$. The first equation expresses the conservation of mass, and the second expresses the conservation of momentum.  Here $\rho$ and $v$ are the respective mass distribution and velocity field of particles and $W$ is the interaction energy.

\par The central goal of this work is to describe how to find a pair $\rho$ and $v$ which solves \eqref{PEE} for given initial conditions 
\be\label{Init}
\rho|_{t=0}=\rho_0\quad \text{and}\quad v|_{t=0}=v_0.
\ee
We typically will assume $\rho_0$ belongs to the space ${\cal P}(\R)$ of Borel probability measures on $\R$ and $v_0: \R\rightarrow \R$ is continuous. 
To this end, we will first produce $X:[0,\infty)\rightarrow L^2(\rho_0)$ which satisfies the {\it pressureless Euler flow equation} 
\be\label{FlowMapEqn}
\dot X(t)=\E_{\rho_0}\left[v_0-\displaystyle\int^t_0(W'*\rho_s)(X(s))ds\bigg| X(t)\right], \quad a.e.\;t\ge 0
\ee
and the initial condition 
\be\label{xInit}
X(0)=\id
\ee
$\rho_0$ almost everywhere.  Here 
\be\label{PushForwardMeasure}
\rho_t:=X(t)_{\#}\rho_0, \quad t\ge 0
\ee
is the push forward of $\rho_0$ under $X(t)$, and $\E_{\rho_0}[ g |X(t)]$ is the conditional expectation of a Borel $g:\R\rightarrow \R$ given $X(t)$. 

\par To emphasize that $X(t)$ is a function on $\R$, we will sometimes write
$$
X(t): \R\rightarrow \R; y\mapsto X(y,t).
$$
The quantity $X(y,t)$ represents the time $t$ position of a particle which was initially at position $y$.  After showing a solution $X$ exists, we will argue that there is a Borel function $v: \R\times [0,\infty)\rightarrow \R$ such that 
\be\label{EulerLagrangeCoord}
\dot X(t)=v(X(t),t), \quad a.e.\;t\ge 0
\ee
$\rho_0$ almost everywhere.  In particular, we will see that 
$$
\rho: [0,\infty)\rightarrow {\cal P}(\R); t\mapsto \rho_t
$$
and $v$ together comprise an appropriately defined weak solution pair for the pressureless Euler system.

\subsection{Main theorem}
Throughout this paper, we will assume the $\rho_0\in {\cal P}(\R)$ has finite second moment
\be
\int_{\R}x^2d\rho_0(x)<\infty
\ee
and 
\be
v_0:\R\rightarrow\R\;\text{ is absolutely continuous}.
\ee
We will also suppose $W:\R\rightarrow \R$ is continuously differentiable, $W$ is even
\be\label{WEven}
W(x)=W(-x), \quad x\in \R
\ee
and $W'$ grows at most linearly 
\be\label{WprimeGrowth}
\sup_{x\in \R}\frac{|W'(x)|}{1+|x|}<\infty.
\ee
Moreover, we will suppose that $W$ is semiconvex. That is, 
\be\label{WSemiCon}
W(x)+\frac{c}{2}x^2\; \text{is convex}
\ee
for some $c>0$.  We recall that concave $W$ corresponds to repulsive interaction between particles. Assuming that $W$ is semiconvex forces $W''(x)\ge -c$ for Lebesgue almost every $x\in \R$, which in a sense limits repulsive interaction. 
 

\begin{thm}\label{mainThm}
 There is a locally Lipschitz continuous $X:[0,\infty)\rightarrow L^2(\rho_0)$ which satisfies the pressureless Euler flow equation  \eqref{FlowMapEqn} and the initial condition \eqref{xInit}. Moreover, $X$ has the following properties. 
\begin{enumerate}[(i)]

\item For Lebesgue almost every $t,s\in [0,\infty)$ with $s\le t$
$$
E(t)\le E(s),
$$
where 
\begin{align*}
E(\tau):=\int_{\R}\frac{1}{2}\dot X(\tau)^2d\rho_0+\int_{\R}\int_{\R}\frac{1}{2}W(X(y,\tau)-X(z,\tau))d\rho_0(y)d\rho_0(z).
\end{align*}

\item For $t\ge 0$ and $y,z\in\textup{supp}(\rho_0)$ with $y\ge z$,
\be
0\le X(y,t)-X(z,t)\le \cosh(\sqrt{c}t)(y-z)+\frac{1}{\sqrt{c}}\sinh(\sqrt{c}t)\int^y_z|v'_0(x)|dx.
\ee

\item For $0<s\le t$ and $y,z\in\textup{supp}(\rho_0)$,
\be
\frac{|X(y,t)-X(z,t)|}{\sinh(\sqrt{c}t)}\le \frac{|X(y,s)-X(z,s)|}{\sinh(\sqrt{c}s)}.
\ee

\end{enumerate}
\end{thm}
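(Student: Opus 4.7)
I would construct $X$ via a time-discretization in which a monotone rearrangement at each step enforces the sticky-particle collision law. Fix $\tau>0$, set $X^\tau_0=\id$, $V^\tau_0=v_0$, and iterate
\[
\tilde X_{n+1}=X^\tau_n+\tau V^\tau_n - \tau^2(W'*\rho^\tau_n)(X^\tau_n),\quad X^\tau_{n+1}=\Pi(\tilde X_{n+1}),\quad V^\tau_{n+1}=\tfrac{1}{\tau}(X^\tau_{n+1}-X^\tau_n),
\]
with $\rho^\tau_n:=(X^\tau_n)_\#\rho_0$ and $\Pi$ the $L^2(\rho_0)$-projection onto the closed convex cone of non-decreasing functions. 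The standard characterization of $\Pi$ (it averages its input over the level sets of its output) gives that $V^\tau_{n+1}$ is the conditional expectation of $V^\tau_n-\tau(W'*\rho^\tau_n)(X^\tau_n)$ given the $\sigma$-algebra generated by $X^\tau_{n+1}$, which is a discrete analog of \eqref{FlowMapEqn}.

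I would then establish that the discrete energy
\[
E^\tau_n:=\tfrac12\|V^\tau_n\|^2_{L^2(\rho_0)}+\tfrac12\iint W(X^\tau_n(y)-X^\tau_n(z))\,d\rho_0(y)\,d\rho_0(z)
\]
is non-increasing (the projection is a contraction in $L^2(\rho_0)$, and the force step cancels against the first-order change of interaction energy), which together with the linear-growth hypothesis on $W'$ and a Gronwall argument yields a $\tau$-uniform Lipschitz bound on the piecewise-linear interpolant of $\{X^\tau_n\}$. By Arzel\`a--Ascoli, I would extract a locally Lipschitz limit $X:[0,\infty)\to L^2(\rho_0)$ and pass the discrete conditional-expectation identity to the limit --- using continuity of $\rho\mapsto W'*\rho$ under narrow convergence --- to obtain \eqref{FlowMapEqn} and \eqref{xInit}. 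Property (i) then follows by lower semicontinuity applied to the limit of the discrete energy inequality.

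The heart of (ii) and (iii) is the assertion that, for $y\ge z$ in $\mathrm{supp}(\rho_0)$, the separation $D(t):=X(y,t)-X(z,t)$ satisfies $\ddot D\le cD$ in a Sturm-type sense away from sticky collisions. Indeed, the difference in force on two points $x_1>x_2$ is $-\int_{x_2}^{x_1}(W''*\rho_t)\le c(x_1-x_2)$ by semiconvexity of $W$, and inelastic collisions only push $D$ toward $0$ and keep it there. Given this, (ii) follows by ODE comparison with $\ddot f=cf$, $f(0)=y-z$, $\dot f(0)=\int_z^y|v_0'|$ (the absolute value handles the uncontrolled sign of $v_0(y)-v_0(z)$). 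For (iii), I would compute
\[
\frac{d}{dt}\bigl[\sinh(\sqrt{c}\,t)\dot D(t)-\sqrt{c}\cosh(\sqrt{c}\,t)D(t)\bigr]=\sinh(\sqrt{c}\,t)\bigl(\ddot D-cD\bigr)\le 0,
\]
observe the initial value $-\sqrt{c}(y-z)\le 0$, and recognize the bracket divided by $\sinh^2(\sqrt{c}\,t)$ as the derivative of $D(t)/\sinh(\sqrt{c}\,t)$.

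The main obstacle will be making $\ddot D\le cD$ rigorous, since the rearrangement introduces positive jumps in $\dot D$ at collision times and \eqref{FlowMapEqn} only specifies $\dot X$ almost everywhere. I expect the cleanest route is to stay at the discrete level: establish the second-order difference inequality $D^\tau_{n+1}-2D^\tau_n+D^\tau_{n-1}\le c\tau^2 D^\tau_n$ --- semiconvexity controls the force step, and the $1$-Lipschitz, order-preserving nature of $\Pi$ controls the projection step --- and then convert this directly into the $\cosh/\sinh$ envelope via a discrete-to-continuous Sturm comparison, sidestepping any need to define $\ddot D$ in the limit.
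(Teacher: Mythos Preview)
Your strategy is genuinely different from the paper's. The paper approximates $\rho_0$ by finitely supported measures, constructs explicit sticky-particle trajectories solving the ODE \eqref{gammaODEt} with perfectly inelastic collisions (Proposition~\ref{StickyParticlesExist}), verifies (i)--(iii) at the finite-particle level via direct ODE arguments (Propositions~\ref{PropQSPP} and \ref{StabilityProp}, Lemma~\ref{EnergyEst}), and then passes to the limit in the approximating measure (Proposition~\ref{StrongCompactXkayjay}). You instead keep $\rho_0$ fixed and discretize in \emph{time}, enforcing stickiness through the $L^2(\rho_0)$-projection $\Pi$ onto nondecreasing functions --- the Natile--Savar\'e / Brenier--Gangbo--Savar\'e--Westdickenberg viewpoint. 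The paper's route makes collisions explicit and permits pointwise arguments on adjacent trajectories $\gamma_{i+1}-\gamma_i$; yours is intrinsic to $L^2(\rho_0)$ and avoids approximating the measure, but hides the collision law inside $\Pi$.

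There is, however, a real gap in your plan for (ii) and (iii). You correctly flag that the projection step is the obstacle, but your proposed fix --- that ``the $1$-Lipschitz, order-preserving nature of $\Pi$ controls the projection step'' in the second-difference inequality --- does not hold. Neither property addresses two-point gaps, and $\Pi$ can strictly \emph{increase} $X(y)-X(z)$: with three equal masses at $z<w<y$ and $\tilde X=(0,2,1)$ one has $\Pi(\tilde X)=(0,\tfrac32,\tfrac32)$, so the $y$--$z$ gap grows from $1$ to $\tfrac32$. Hence $D^\tau_{n+1}-2D^\tau_n+D^\tau_{n-1}\le c\tau^2 D^\tau_n$ can fail at a projection step, and the discrete Sturm comparison you envisage does not go through as stated. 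The paper sidesteps this entirely: in continuous-time sticky particles it reduces to \emph{adjacent} indices (proof of Proposition~\ref{StabilityProp}), and there a collision of $\gamma_{i+1}$ with higher-indexed particles can only lower $\dot\gamma_{i+1}$ while a collision of $\gamma_i$ with lower-indexed ones can only raise $\dot\gamma_i$, so $\dot D$ jumps \emph{down}, and the comparison Lemma~\ref{ODEineqLemma2} (whose hypothesis is precisely $\dot y(t+)\le\dot y(t-)$) applies. Your time-discrete scheme loses this sign because a single step of size $\tau$ allows overshooting and non-adjacent merging; to salvage your route you would need either to track a different quantity than the raw two-point gap, or to exploit the level-set-averaging structure of $\Pi$ far more carefully than ``$1$-Lipschitz and order-preserving'' allows.
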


\par A few remarks about the statement of this theorem are in order. Locally Lipschitz means that $X:[0,T]\rightarrow L^2(\rho_0)$ is Lipschitz continuous for each $T\ge 0$, and consequently,
\be
\dot X(t)=\lim_{\tau\rightarrow 0}\frac{X(t+\tau)-X(t)}{\tau}
\ee
exists in $L^2(\rho_0)$ for almost almost every $t> 0$. 
The function $E$ in condition $(i)$ represents the total energy of the physical system being modeled by the pressureless Euler flow equation. Condition $(ii)$ asserts that 
$X(t)$ is nondecreasing and absolutely continuous on the support of $\rho_0$
$$
\text{supp}(\rho_0):=\{y\in \R: \rho_0((y-\delta,y+\delta))>0\;\text{for all $\delta>0$}\}.
$$
Property $(iii)$ asserts that $X$ is quantitatively ``sticky."  That is, it quantifies the fact that if $X(y,s)=X(z,s)$, then $X(y,t)=X(z,t)$ for all $t\ge s$.

\par We will show that the existence of a weak solution of \eqref{PEE} for given initial conditions is a corollary of Theorem \ref{mainThm}. In particular, we will verify that $\rho$ defined in \eqref{PushForwardMeasure} and any Borel $v$ which satisfies \eqref{EulerLagrangeCoord} is a weak solution of the pressureless Euler system whose
energy 
\be
\int_{\R}\frac{1}{2}v(x,t)^2d\rho_t(x)+\int_{\R}\int_{\R}\frac{1}{2}W(x-y)d\rho_t(x)d\rho_t(y)
\ee
is essentially nonincreasing in $t$ and which satisfies the one sided Lipschitz condition 
\be
(v(x,t)-v(y,t))(x-y)\le \frac{\sqrt{c}}{\tanh(\sqrt{c}t)}(x-y)^2
\ee
for $\rho_t$ almost every $x,y\in \R$.

\subsection{Prior work}
We have already established the existence of a weak solution pair to the pressureless Euler system with an even, continuously differentiable, semiconvex potential.  In  \cite{Hynd2}, we generated this solution via a Borel probability measure $\eta$ on the space of continuous paths $\Gamma:=C([0,\infty))$ endowed with the topology of local uniform convergence. Specifically, we constructed an $\eta$ which satisfies:  $(i)$ for each bounded, continuous $h:\R\rightarrow \R$ and almost every $t\ge 0$
\be
\int_{\Gamma}\dot\gamma(t) h(\gamma(t))d\eta(\gamma)=\int_{\Gamma}\left[v_0(\gamma(0))-\int^t_0(W'*\rho_s)(\gamma(s))ds \right]h(\gamma(t))d\eta(\gamma),
\ee
where $\rho_s\in {\cal P}(\R)$ is defined via
$$
\int_{\R}h(x)d\rho_s(x)=\int_{\Gamma}h(\gamma(s))d\eta(\gamma);
$$ 
$(ii)$ there is a Borel $v:\R\times[0,\infty)\rightarrow \R$, such that
$$
\dot\gamma(t)=v(\gamma(t),t),\quad\text{a.e.}\;t>0
$$
for $\eta$ almost every $\gamma\in \Gamma$.   Then we checked that $\rho:[0,\infty)\rightarrow {\cal P}(\R); t\mapsto \rho_t$ and $v$ is indeed a weak solution pair. 

\par Along the way, we derived some specific information on $\eta$ such as it is concentrated on absolutely continuous paths, it satisfies various energy estimates, and 
\be
\frac{|\gamma(t)-\xi(t)|}{\sinh(\sqrt{c}t)}\le \frac{|\gamma(s)-\xi(s)|}{\sinh(\sqrt{c}s)}.
\ee
for $0<s\le t$ and $\eta$ almost every $\gamma,\xi\in\Gamma$. We consider Theorem \ref{mainThm} to be a
refinement of the main result in \cite{Hynd2} as it tells us that we can choose $\eta$ as the push forward of $\rho_0$ under the map $\R\mapsto \Gamma; y\mapsto X(y,\cdot)$. Here $X(y,\cdot)$ is the path $t\mapsto X(y,t)$, which is continuous for $\rho_0$ almost every $y\in\R$.  That is, $\eta$ can be specified as
$$
\int_{\Gamma}F(\gamma)d\eta(\gamma)=\int_{\R}F(X(y,\cdot))d\rho_0(y)
$$
for each $F:\Gamma\rightarrow \R$ that is continuous and bounded.

\par There have been many other works on pressureless Euler type systems in one spatial dimension. 
Especially since they are special cases of the multidimensional systems of equations which arise in the study of galaxy formation \cite{Gurbatov, Zeldovich}. One of the early mathematical works on this topic was by E, Rykov and Sinai \cite{ERykovSinai}, where they studied the case $W(x)=|x|$ which corresponds to gravitational interaction between a collection of interacting particles constrained to move along the real line. We acknowledge that the existence of solutions for this particular case does not follow from Theorem \ref{mainThm} as $W(x)=|x|$ isn't continuously differentiable. Nevertheless, we will revisit this particular case below. 

\par Another very influential study on this topic was done by Brenier, Gangbo, Savar\'e and Westdickenberg \cite{BreGan}. In comparison to our work, they considered general interactions which could be attractive or repulsive. We also note that they recast the pressureless Euler equations in another coordinate system, and they were able to obtain precise information about solutions from the resulting differential inclusions.  Other work with related approaches were done by Gangbo, Nguyen, and Tudorascu \cite{GNT} and Nguyen and Tudorascu \cite{NguTud} on the Euler-Poisson system and by Brenier and Grenier \cite{BreGre},  Natile and Savar\'e \cite{NatSav}, and 
Cavalletti, Sedjro and Westdickenberg \cite{MR3296602} for the sticky particle system ($W\equiv 0$ in \eqref{PEE} or equation \eqref{SPS} below).  We also recommend the additional references \cite{Bezard,Liu,Guo,Jabin,Jin,LeFloch,Makino,Shen} for results on stationary solutions, local existence, uniqueness, and hydrodynamic limits related to pressureless Euler type systems.

\par The particular approach we take in this paper is motivated by the work of Dermoune \cite{Dermoune} on the sticky particle system 
\begin{equation}\label{SPS}
\begin{cases}
\hspace{.272in}\partial_t\rho +\partial_x(\rho v)=0\\
\partial_t(\rho v) +\partial_x(\rho v^2)=0.
\end{cases}
\end{equation}
In particular, Dermoune was the first to identify that 
\be\label{SPSFlowMapEqn}
\dot X(t)=\E_{\rho_0}\left[v_0| X(t)\right], \quad a.e.\;t\ge 0
\ee
is the natural equation for the sticky particle system in Lagrangian variables.   We performed a thorough analysis of \eqref{SPSFlowMapEqn} in \cite{MR4007615} and regard Theorem \ref{mainThm} as a significant generalization of the main results of \cite{MR4007615}.

\subsection{Euler-Poisson equations}
 As mentioned above, we will also consider the {\it Euler-Poisson equations} 
\be\label{EP1D}
\begin{cases}
\hspace{.272in}\partial_t\rho +\partial_x(\rho v)=0\\
\partial_t(\rho v) +\partial_x(\rho v^2)=-\rho(\sgn*\rho).
\end{cases}
\ee
Here 
\be
\sgn(x):=
\begin{cases}
1,\quad &x>0\\
0,\quad &x=0\\
-1,\quad &x<0,
\end{cases}
\ee
and the associated interaction potential is $W(x)=|x|$. This system governs the dynamics of a collection of particles in which the force on each particle is proportional to the total mass to the right of the particle minus the total mass to the left of the particle; when particles collide, they undergo perfectly inelastic collisions \cite{BreGan, ERykovSinai}. This is a simple model for gravitationally interacting particles which are constrained to move on the real line. 

\par As we did for the pressureless Euler system,  we will design a trajectory mapping $X$ which satisfies the {\it Euler-Poisson flow equation}
\be\label{FlowMapEqnEP}
\dot X(t)=\E_{\rho_0}\left[v_0-\displaystyle\int^t_0(\text{sgn}*\rho_s)(X(s))ds\bigg| X(t)\right], \quad a.e.\;t\ge 0
\ee
and the initial condition \eqref{xInit}. However, since $\sgn$ is not continuous, we will have to argue a bit differently than we did to prove Theorem \ref{mainThm} in order to obtain the following theorem. 

\begin{thm}\label{secondThm}
There is a Lipschitz continuous $X:[0,\infty)\rightarrow L^2(\rho_0)$ which satisfies the Euler-Poisson flow equation \eqref{FlowMapEqnEP} and the initial condition \eqref{xInit}. Moreover, $X$ has the following properties. 
\begin{enumerate}[(i)]

\item For Lebesgue almost every $t,s\in [0,\infty)$ with $s\le t$
$$
E(t)\le E(s),
$$
where 
\be
E(\tau):=\int_{\R}\frac{1}{2}\dot X(\tau)^2d\rho_0+\int_{\R}\int_{\R}\frac{1}{2}|X(y,\tau)-X(z,\tau)|d\rho_0(y)d\rho_0(z).
\ee

\item For $t\ge 0$ and $y,z\in\textup{supp}(\rho_0)$ with $y\ge z$,
\be
0\le X(y,t)-X(z,t)\le y-z+t\int^y_z|v'_0(x)|dx.
\ee

\item For $0<s\le t$ and $y,z\in\textup{supp}(\rho_0)$,
\be
\frac{1}{t}|X(y,t)-X(z,t)|\le \frac{1}{s}|X(y,s)-X(z,s)|.
\ee

\end{enumerate}
\end{thm}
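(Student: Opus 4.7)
My plan is to construct $X$ as a limit of trajectory maps obtained by replacing the singular potential $|x|$ with a sequence of smooth convex approximations. For each $n\ge 1$ set $W_n(x):=\sqrt{x^2+n^{-2}}-n^{-1}$, so $W_n$ is even, continuously differentiable, and strictly convex (hence semiconvex with any constant $c>0$), with $0\le W_n(x)\le |x|$, $|W_n'|\le 1$, and $W_n'(x)\to\sgn(x)$ for every $x\ne 0$. Theorem \ref{mainThm} then produces $X_n:[0,\infty)\to L^2(\rho_0)$ that satisfies the pressureless Euler flow equation for $W_n$ together with $X_n(0)=\id$.

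Next I derive $n$-uniform estimates. Since $W_n\ge 0$, Theorem \ref{mainThm}(i) gives
\[
\tfrac{1}{2}\|\dot X_n(t)\|_{L^2(\rho_0)}^2 \le E_n(0)\le \tfrac{1}{2}\int_\R v_0^2\,d\rho_0+\tfrac{1}{2}\int_\R\!\int_\R |y-z|\,d\rho_0(y)d\rho_0(z),
\]
a bound independent of $n$ and $t$, so each $X_n$ is globally (not merely locally) Lipschitz from $[0,\infty)$ to $L^2(\rho_0)$ with $n$-uniform constant. Because $W_n$ is convex, the semiconvexity constant $c$ in Theorem \ref{mainThm}(ii)-(iii) may be taken as small as desired; sending $c\to 0^+$ yields the $n$-uniform bounds
\[
X_n(y,t)-X_n(z,t)\le (y-z)+t\int_z^y|v_0'(x)|\,dx,\qquad \tfrac{1}{t}|X_n(y,t)-X_n(z,t)|\le \tfrac{1}{s}|X_n(y,s)-X_n(z,s)|,
\]
which are exactly the claims in parts (ii) and (iii) of Theorem \ref{secondThm}.

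An Arzel\`a-Ascoli argument (together with the common initial condition $X_n(0)=\id$) then yields a subsequence along which $X_n\to X$ locally uniformly in $C([0,\infty);L^2(\rho_0))$, with $\dot X_n\rightharpoonup\dot X$ weakly in $L^2_{\textup{loc}}([0,\infty);L^2(\rho_0))$. Properties (ii) and (iii) of Theorem \ref{secondThm} pass to the limit by continuity, while (i) follows from weak lower semicontinuity of the kinetic energy combined with strong convergence of the potential energy (from uniform convergence $W_n(x)\to|x|$ on compact sets and the uniform-in-$n$ bound on $|X_n(y,t)-X_n(z,t)|$).

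The main obstacle, and the point at which one must argue differently than for Theorem \ref{mainThm}, is passing to the limit in the convolution term of the flow equation, since $\sgn$ is discontinuous at $0$. Writing
\[
(W_n'*\rho_{n,s})(X_n(y,s))=\int_\R W_n'(X_n(y,s)-X_n(z,s))\,d\rho_0(z),
\]
the monotonicity of $z\mapsto X_n(z,s)$ forces the sign of the integrand to match $\sgn(y-z)$ whenever $X_n(y,s)\ne X_n(z,s)$. On the set $\{X(y,s)\ne X(z,s)\}$, the pointwise convergence $W_n'\to\sgn$ on $\R\setminus\{0\}$ handles the limit; on the sticky set $\{X(y,s)=X(z,s)\}$, the uniform bound $|W_n'|\le 1$ supports a dominated convergence argument consistent with the convention $\sgn(0)=0$. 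Combined with the strong $L^2(\rho_0)$-convergence $X_n(t)\to X(t)$ and the monotone structure of $X_n(\cdot,t)$ (which ensures the conditional expectations in \eqref{FlowMapEqnEP} behave well in the limit), this produces the Euler-Poisson flow equation for $X$.
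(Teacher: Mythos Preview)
Your overall strategy---approximate $|x|$ by the smooth convex potentials $W_n$ and invoke Theorem~\ref{mainThm}---is exactly the paper's, and your uniform estimates (a global Lipschitz constant in $t$ from $W_n\ge 0$, and the $c\to 0^+$ limits of Theorem~\ref{mainThm}(ii)--(iii)) are correct.

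The gap is precisely where you flag the difficulty: the convolution limit on the sticky set. Dominated convergence requires pointwise convergence of the integrand, but on $\{(y,z):X(y,s)=X(z,s)\}$ you only know $u_n:=X_n(y,s)-X_n(z,s)\to 0$, and $W_n'(u_n)=u_n/\sqrt{u_n^2+n^{-2}}$ need not tend to $0$ (take $u_n=n^{-1}$, giving limit $1/\sqrt 2$). So the integrand may freeze at any value in $[-1,1]$ on that set, and the bound $|W_n'|\le 1$ alone cannot force the limit you want. The paper circumvents this with a variational identification: Lemma~\ref{SubdiffLemma} shows that $\xi=\sgn*\mu$ in $L^2(\mu)$ if and only if the subdifferential inequality
\[
\iint_{\R^2}\tfrac12\bigl|x-y+g(x)-g(y)\bigr|\,d\mu\,d\mu\ \ge\ \iint_{\R^2}\tfrac12|x-y|\,d\mu\,d\mu+\int_\R \xi\,g\,d\mu
\]
holds for all admissible $g$. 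Since the analogous inequality for $W_{\epsilon}$ is stable under narrow convergence of the measures and $\epsilon\to 0$, every weak $L^2$ cluster point of $W'_{\epsilon_k}*\rho^{\epsilon_k}_\xi$ must equal $\sgn*\rho_\xi$. Combined with the semigroup relation $X^\epsilon(\tau)=f^\epsilon_{\tau,\xi}(X^\epsilon(\xi))$, which rewrites $h(X^\epsilon(\tau))$ as a function of $X^\epsilon(\xi)$, this identifies the limit of the nonlinear term without any pointwise control on the sticky set. An alternative repair in your spirit is to symmetrize first: by oddness of $W_n'$ one may replace $h(X_n(y,\tau))$ by $\tfrac12[h(X_n(y,\tau))-h(X_n(z,\tau))]$; on $\{X(y,\xi)=X(z,\xi)\}$ the stickiness of the \emph{limit} $X$ forces $X(y,\tau)=X(z,\tau)$, so the bracket tends to $0$ and dominated convergence then applies. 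Either way, the step must be made explicit---your appeal to ``monotone structure'' and the uniform bound does not supply it.
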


\par As with the pressureless Euler system, we will be able to generate a weak solution pair of the Euler-Poisson system a the solution $X$ obtained in Theorem \ref{secondThm}. Namely, $\rho$ defined in \eqref{PushForwardMeasure} and any Borel $v$ which satisfies \eqref{EulerLagrangeCoord} is a weak solution of the  Euler-Poisson system whose
total energy 
\be
\int_{\R}\frac{1}{2}v(x,t)^2d\rho_t(x)+\int_{\R}\int_{\R}\frac{1}{2}|x-y|d\rho_t(x)d\rho_t(y)
\ee
is nonincreasing in time and which fulfills the ``entropy" inequality
\be
(v(x,t)-v(y,t))(x-y)\le \frac{1}{t}(x-y)^2
\ee
for $\rho_t$ almost every $x,y\in \R$. 
\\
\par The organization of this paper is as follows. First, we will review a few preliminaries needed for our study in section \ref{PrelimSect}. Then we will show by a near explicit construction how to solve the pressureless Euler flow equation when the support of $\rho_0$ is finite in section \ref{StickyParticleSect}. In section \ref{CompactSect}, we will analyze these special solutions and show they are compact in a certain sense. This compactness will allow us to solve the pressureless Euler flow equation for a general $\rho_0$ and consequently to solve the pressureless Euler equations for given initial conditions.  Finally, in section \ref{EPsect}, we will show how to alter the arguments we used for the pressureless Euler flow equation to solve the flow equation associated with the Euler-Poisson equations.

\section{Preliminaries}\label{PrelimSect}
In this section, we will briefly recall the facts we will need regarding the convergence of probability measures and conditional expectation. 

\subsection{Convergence of probability measures}
As in the introduction, we denote ${\cal P}(\R^d)$ as the space of Borel probability measures on $\R^d$. We will also write $C_b(\R^d)$ for the space of bounded continuous functions on $\R^d$. We will say that a sequence $(\mu^k)_{k\in\N}\subset {\cal P}(\R^d)$ converges to $\mu$ in ${\cal P}(\R^d)$ {\it narrowly} provided 
\be\label{gConvUndermukay}
\lim_{k\rightarrow\infty}\int_{\R^d}gd\mu^k=\int_{\R^d}gd\mu
\ee
for every $g\in C_b(\R^d)$.  It turns out that $(\mu^k)_{k\in\N}$ converges to $\mu$ narrowly if and only if $\lim_{k\rightarrow\infty}\mathcal{d}(\mu,\mu^k)=0$, where $\mathcal{d}$ is a metric of the form
\be\label{NarrowMetric}
\mathcal{d}(\mu,\nu):=\sum^\infty_{j=1}\frac{1}{2^j}\left|\int_{\R^d}h_jd\mu-\int_{\R^d}h_jd\nu\right|,\quad \mu,\nu\in {\cal P}(\R^d).
\ee
 Here each $h_j:\R^d\rightarrow \R$ satisfies 
\be
|h_j(x)|\le 1\quad \text{and}\quad |h_j(x)-h_j(y)|\le |x-y|
\ee
for $x,y\in \R^d$ (Remark 5.1.1 of \cite{AGS}). Furthermore, $({\cal P}(\R^d), \mathcal{d})$ is a complete metric space.

\par We will need to be able to identify when a sequence of measures in ${\cal P}(\R^d)$ has a narrowly convergent subsequence.  Fortunately, Prokhorov's theorem provides a necessary and sufficient condition; it asserts that  $(\mu^k)_{k\in\N}\subset {\cal P}(\R^d)$ has a narrowly convergent subsequence if and only if there is $\varphi: \R^d\rightarrow [0,\infty]$ with compact sublevel sets such that
\be\label{prohorovCond}
\sup_{k\in \N}\int_{\R^d}\varphi d\mu^k<\infty
\ee
(Theorem 5.1.3 of \cite{AGS}). In addition, we will need to know when \eqref{gConvUndermukay} holds for unbounded $g$. It turns out that if $g: \R^d\rightarrow \R$ is continuous and
$$
\lim_{R\rightarrow\infty}\int_{|g|\ge R}|g|d\mu^k=0
$$
uniformly in $k\in \N$, then \eqref{gConvUndermukay} holds (Lemma 5.1.7 of \cite{AGS}).  In this case, we say that  $|g|$ is {\it uniformly integrable} with respect to the sequence $(\mu^k)_{k\in\N}$.

\par The following lemma will also prove to be useful.

\begin{lem}[Lemma 2.1 of \cite{MR4007615}]\label{LemmaVariantNarrowCon}
Suppose $(g^k)_{k\in\N}$ is a sequence of continuous functions on $\R^d$ which converges locally uniformly to $g$ and $(\mu^k)_{k\in\N}\subset {\cal P}(\R^d)$ converges narrowly to $\mu$. Further assume there is $h:\R^d\rightarrow [0,\infty)$ with compact sublevel sets, which is uniformly integrable with respect to  $(\mu^k)_{k\in\N}$ and  satisfies
\be\label{gkaydominatedbyH}
|g^k|\le h
\ee
for each $k\in \N$. Then
\be\label{UniformonvUndermukay}
\lim_{k\rightarrow\infty}\int_{\R^d}g^kd\mu^k=\int_{\R^d}gd\mu.
\ee
\end{lem}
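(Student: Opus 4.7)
The plan is to isolate the region where $h$ is bounded, apply narrow convergence together with the locally uniform convergence of the $g^k$ there, and control the complementary tails via the uniform integrability of $h$.

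First I would record two preliminary observations. Since $g^k\to g$ pointwise and $|g^k|\le h$, the limit satisfies $|g|\le h$. Because the sublevel sets of $h$ are compact, $h$ is lower semicontinuous, and combined with narrow convergence this yields the portmanteau inequality
\begin{equation*}
\int_{\R^d} h\, d\mu \;\le\; \liminf_{k\to\infty}\int_{\R^d} h\, d\mu^k.
\end{equation*}
Uniform integrability of $h$ along $(\mu^k)$ forces the right-hand side to be finite (choose $R_0$ so that $\sup_k \int_{\{h\ge R_0\}} h\, d\mu^k \le 1$; then $\sup_k \int h\, d\mu^k \le R_0+1$), so $\int h\, d\mu<\infty$ and consequently $\int_{\{h>R\}} h\, d\mu \to 0$ as $R\to\infty$.

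For each $R>0$ I would then pick a continuous cutoff $\psi_R:\R^d\to[0,1]$ with compact support and identically equal to $1$ on the compact set $K_R:=\{h\le R\}$ --- for instance by enclosing $K_R$ in a large ball and invoking Urysohn's lemma. Writing
\begin{align*}
\int g^k\, d\mu^k - \int g\, d\mu &= \int (1-\psi_R)g^k\, d\mu^k + \left(\int \psi_R g^k\, d\mu^k - \int \psi_R g\, d\mu\right) \\ &\quad - \int (1-\psi_R)g\, d\mu,
\end{align*}
and noting that $1-\psi_R$ vanishes on $\{h\le R\}$ while $|g^k|,|g|\le h$, the first and third terms are bounded in absolute value by $\int_{\{h>R\}} h\, d\mu^k$ and $\int_{\{h>R\}} h\, d\mu$, respectively. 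Given $\varepsilon>0$, the previous paragraph lets me fix $R$ so large that both are at most $\varepsilon/3$, uniformly in $k$ for the first.

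With $R$ fixed, the middle term vanishes as $k\to\infty$. Indeed, I would split it as $\int \psi_R(g^k-g)\, d\mu^k + \bigl(\int \psi_R g\, d\mu^k - \int \psi_R g\, d\mu\bigr)$: the first piece is bounded by $\sup_{\operatorname{supp}(\psi_R)}|g^k-g|$, which tends to $0$ by locally uniform convergence on the compact set $\operatorname{supp}(\psi_R)$, while the second piece tends to $0$ because $\psi_R g$ is continuous and bounded and $\mu^k\to\mu$ narrowly. Sending first $k\to\infty$ and then $\varepsilon\to 0$ completes the argument. The most delicate step is ensuring $\int h\, d\mu<\infty$; once the compact-sublevel-set hypothesis is turned into lower semicontinuity of $h$ to unlock the portmanteau inequality, the rest is routine tightness-style bookkeeping.
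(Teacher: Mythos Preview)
The paper does not supply its own proof of this lemma; it is quoted verbatim from \cite{MR4007615} and used as a black box. Your argument is correct and is the standard one: the compact-sublevel-set hypothesis gives lower semicontinuity of $h$, which via portmanteau yields $\int h\,d\mu<\infty$; a Urysohn cutoff localizes to the compact set $\{h\le R\}$, where locally uniform convergence of $g^k$ and narrow convergence of $\mu^k$ handle the main term; and uniform integrability of $h$ (together with $|g^k|,|g|\le h$) disposes of the tails. There is nothing to compare against in the present paper, and no gap in your proof.
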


\subsection{The push-forward}
Suppose $f:\R^d\rightarrow \R^n$ is Borel measurable and $\mu\in {\cal P}(\R^d)$. We define the {\it push-forward} of $\mu$ through $f$ as the probability measure $f_{\#}\mu\in {\cal P}(\R^n)$ which satisfies 
$$
\int_{\R^n}g(y)d(f_{\#}\mu)(y)=\int_{\R^d}g(f(x))d\mu(x)
$$
for every $g\in C_b(\R^n)$.  We note 
$$
f_{\#}\mu(A)=\mu(f^{-1}(A))
$$
for Borel $A\subset \R^n$.  Moreover, if $f$ is continuous and $\mu^k\rightarrow \mu$ narrowly in ${\cal P}(\R^d)$, then 
$$
f_{\#}\mu^k\rightarrow f_{\#}\mu
$$ 
in ${\cal P}(\R^n)$.

\subsection{Conditional expectation}
Suppose $\mu\in {\cal P}(\R)$, $g\in L^2(\mu)$ and $Y: \R\rightarrow \R$ is a Borel measurable function.
A {\it conditional expectation} of $g$ with respect to $\mu$ given $Y$ is an $L^2(\mu)$ function $\E_{\mu}[g|Y]$ which satisfies two conditions: $(i)$
\be\label{IntegralCondExpCond}
\displaystyle\int_{\R}\E_\mu[g|Y]\; h(Y)d\mu=\int_{\R}g\; h(Y)d\mu
\ee
for each Borel $h:\R\rightarrow \R$ with $h(Y):=h\circ Y\in L^2(\mu)$; and $(ii)$
$$
\displaystyle\E_{\mu}[g|Y]=f(Y)
$$
$\mu$ almost everywhere for a Borel $f:\R\rightarrow \R$ with $f(Y)\in L^2(\mu)$.  The existence and $\mu$ almost everywhere uniqueness of a conditional expectation can be proved using the Radon-Nikodym theorem.

\par We emphasize that $X:[0,\infty)\rightarrow L^2(\rho_0)$ satisfies the pressureless Euler flow equation \eqref{FlowMapEqn}, provided the following two conditions hold for almost every $t\ge 0$: $(i)$
\be
\int_{\R}g(X(t))\dot X(t)d\rho_0=\int_{\R}g(X(t))\left[v_0-\displaystyle\int^t_0(W'*\rho_\tau)(X(\tau))d\tau\right]d\rho_0
\ee
for each $g\in C_b(\R)$; and $(ii)$ there exists a Borel $u: \R\rightarrow \R$ for which 
$$
\dot X(t)=u(X(t))
$$
$\rho_0$ almost everywhere.

\section{Sticky particle trajectories}\label{StickyParticleSect}
In this section, we will assume that $\rho_0$ is a convex combination of Dirac measures
\be\label{discreterhozero}
\rho_0:=\sum^N_{i=1}m_i\delta_{x_i}\in {\cal P}(\R).
\ee
In particular, we suppose that $x_1,\dots, x_N\in \R$ are distinct and $m_1,\dots, m_N>0$ with $\sum^N_{i=1}m_i=1$.  We also define  
$$
v_i:=v_0(x_i)
$$
for $i=1,\dots, N$.  It turns out that there is a natural ODE system related to the pressureless 
Euler flow equation, which is
\be\label{gammaODEt}
\ddot \gamma_i(t)=-\sum^N_{j=1}m_jW'(\gamma_i(t)-\gamma_j(t)).
\ee
These are Newton's equations for $N$ interacting particles with masses $m_1,\dots, m_N$; the positions of these particles  
are described by the trajectories $\gamma_1,\dots, \gamma_N$.

\par It turns out that a solution of the pressureless Euler flow equation can be built from these particle trajectories by first setting 
$$
X(x_i,t)=\gamma_i(t), \quad t\ge 0.
$$ 
However, when trajectories intersect, we must 
modify the paths. Remarkably, the natural thing to do is to require that the corresponding particles undergo perfectly 
inelastic collisions when they collide. This amounts to requiring that the trajectories coincide and 
that their slopes average from the moment  they intersect. On any time interval when no collisions 
occur, the resulting trajectories will satisfy \eqref{gammaODEt}.  We will call these paths {\it sticky particle trajectories}
and we shall see that they are the building blocks for more general solutions. 

\begin{figure}[h]
\centering
 \includegraphics[width=.8\textwidth]{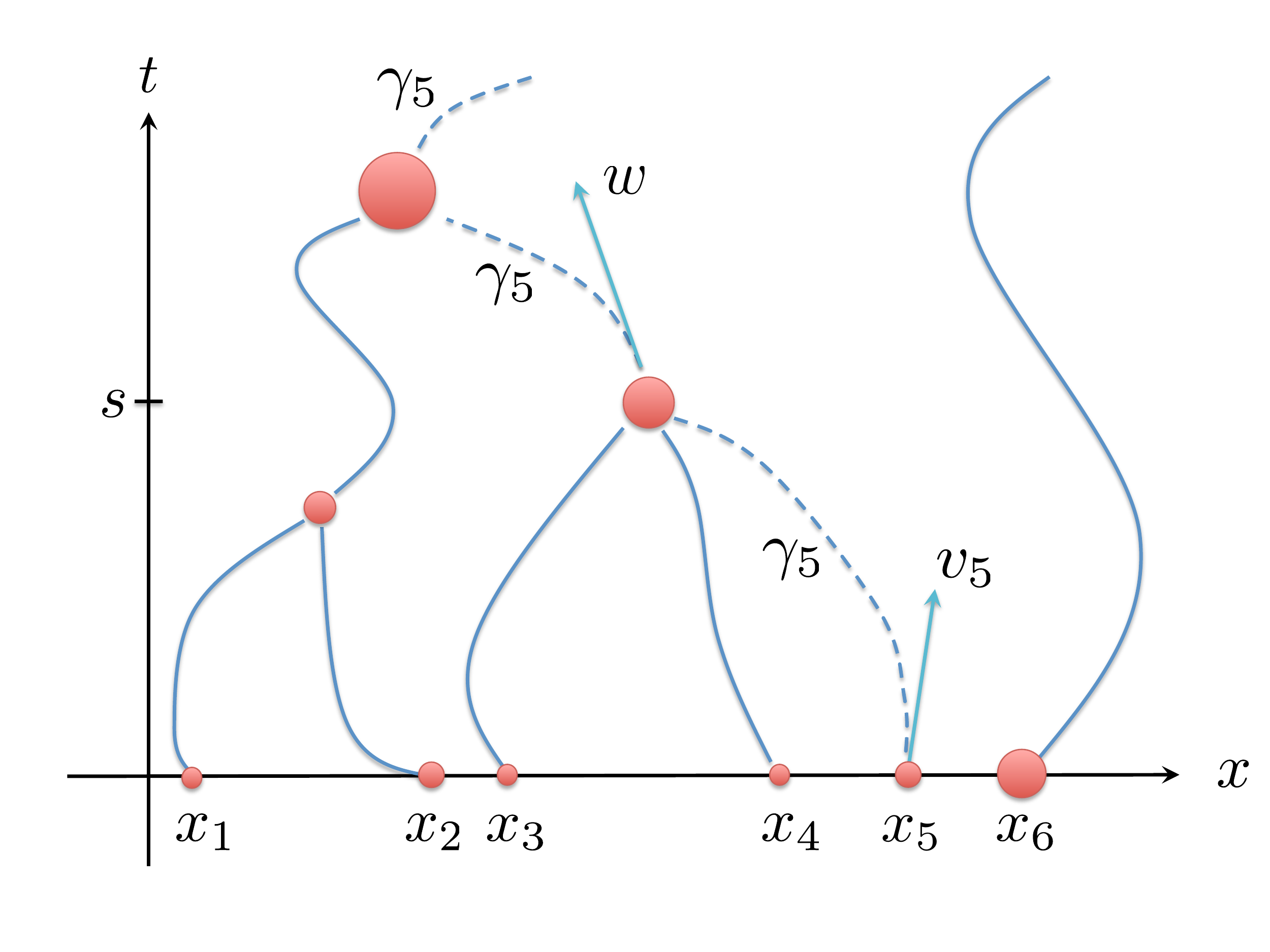}
 \caption{A schematic of sticky particle trajectories for $N=6$. We have indicated the starting positions $x_1, \dots, x_6$ on the real line and we sketched the corresponding point masses larger than points to emphasize that their masses may be distinct. The path $\gamma_5$ that tracks point mass $m_5$ is shown in dashed along with its initial velocity $v_5$. Observe that there is a collision at time $s$ between the point masses $m_3, m_4$ and $m_5$. As a result, the slope $w$ satisfies $(m_3+m_4+m_5)w=m_3\dot\gamma_3(s-)+m_4\dot\gamma_4(s-)+m_5\dot\gamma_5(s-)$.
 }\label{Fig1}
\end{figure}

\par The following proposition asserts that these trajectories exist and satisfy a few basic properties. 

\begin{prop}[Proposition 2.1 \cite{Hynd2}]\label{StickyParticlesExist}
There are continuous, piecewise $C^2$ paths 
$$
\gamma_1,\dots,\gamma_N : [0,\infty)\rightarrow \R
$$
with the following properties. \\
(i) For $i=1,\dots, N$ and all but finitely many $t\in (0,\infty)$, \eqref{gammaODEt} holds. \\
(ii) For $i=1,\dots, N$,
$$
\gamma_i(0)=x_i\quad \text{and}\quad \dot\gamma_i(0+)=v_i.
$$
(iii) For $i,j=1,\dots, N$, $0\le s\le t$ and $\gamma_i(s)=\gamma_j(s)$ imply 
$$
\gamma_i(t)=\gamma_j(t).
$$
(iv) If $t>0$, $\{i_1,\dots, i_k\}\subset\{1,\dots, N\}$, and
$$
\gamma_{i_1}(t)=\dots=\gamma_{i_k}(t)\neq \gamma_i(t)
$$
for $i\not\in\{i_1,\dots, i_k\}$, then
$$
\dot\gamma_{i_j}(t+)=\frac{m_{i_1}\dot\gamma_{i_1}(t-)+\dots+m_{i_k}\dot\gamma_{i_k}(t-)}{m_{i_1}+\dots+m_{i_k}}
$$
for $j=1,\dots, k$.
\end{prop}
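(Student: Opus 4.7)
The plan is to construct $\gamma_1,\ldots,\gamma_N$ by induction on the number of distinct initial positions $N$, following the natural procedure of solving Newton's ODE \eqref{gammaODEt} until the first collision and then restarting with merged clusters. The base case $N=1$ is trivial: since $W$ is even and $C^1$, we have $W'(0)=0$, so $\gamma_1(t)=x_1+v_1t$ solves \eqref{gammaODEt} and satisfies (i)--(iv) vacuously.

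For the inductive step with $N$ particles, I would first solve \eqref{gammaODEt} with initial data $\gamma_i(0)=x_i$, $\dot\gamma_i(0)=v_i$ by appealing to Peano's theorem, which only needs continuity of $W'$; global existence on $[0,\infty)$ follows from the linear growth assumption \eqref{WprimeGrowth} and a Gronwall estimate on $\sum_i|\gamma_i|+|\dot\gamma_i|$. I then set
$$
t_1:=\inf\{t>0:\gamma_i(t)=\gamma_j(t)\text{ for some }i\neq j\}.
$$
Because the $x_i$ are distinct and the $\gamma_i$ are continuous, $t_1>0$; if $t_1=\infty$ we are finished, so assume $t_1<\infty$. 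On $[0,t_1]$ the paths are $C^2$, and \eqref{gammaODEt} holds on $(0,t_1)$.

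At time $t_1$ I would partition $\{1,\ldots,N\}$ into the maximal clusters of indices sharing a common position $\gamma_i(t_1)$, and within each cluster assign the mass-averaged post-collision velocity prescribed in (iv). The crucial algebraic observation is that if particles with indices $i_1,\ldots,i_k$ occupy a common position $y_\ell$ with combined mass $M_\ell=m_{i_1}+\cdots+m_{i_k}$, then the right-hand side of \eqref{gammaODEt} for any member of the cluster collapses to $-\sum_\ell M_\ell W'(y-y_\ell)$, which is exactly \eqref{gammaODEt} for the reduced system in which each cluster is treated as a single particle of mass $M_\ell$. Since the reduced problem has strictly fewer than $N$ distinct starting positions, the inductive hypothesis supplies sticky particle trajectories for it on $[0,\infty)$; translating these to start at $t_1$ and pulling back, I define $\gamma_i(t)$ for $t\geq t_1$ to be the trajectory of the cluster containing $i$.

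Properties (i)--(iv) then follow with relatively little additional work: (i) holds on each inter-collision interval, and the total number of collision times is at most $N-1$ because each collision strictly reduces the number of distinct clusters; (ii) is built into the initial data; (iii) holds because once two indices are in a common cluster at some time they share a trajectory thereafter by construction; and (iv) is the assignment rule used at each collision. The main obstacle I foresee is the bookkeeping required to check that the concatenation across all finitely many collision times yields a globally continuous, piecewise $C^2$ path and that the reduced-system equivalence genuinely carries over through repeated mergers. A secondary subtlety is that $W'$ may fail to be locally Lipschitz (as $W$ is only $C^1$ and semiconvex), which is why I use Peano rather than Picard--Lindel\"of for existence between collisions; this suffices since the proposition only asserts existence.
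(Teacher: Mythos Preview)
The paper does not supply its own proof of this proposition; it is imported verbatim from the companion paper \cite{Hynd2} and stated here without argument. Your inductive construction---solve the smooth ODE \eqref{gammaODEt} until the first collision, merge colliding particles into clusters carrying the mass-averaged velocity, and recurse on the strictly smaller system---is the standard argument and is correct. The key algebraic point you identify, that the force on each member of a cluster collapses to the cluster-level force because all members share a common position, is precisely why the reduced dynamics remain consistent with \eqref{gammaODEt}, and your count of at most $N-1$ collision times (each collision strictly decreasing the number of distinct clusters) is the right finiteness mechanism. Your caveat about invoking Peano rather than Picard--Lindel\"of is well taken: under the standing hypotheses $W$ is only $C^1$ and semiconvex, so $W'$ need not be locally Lipschitz, and the proposition indeed only asserts existence.
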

\begin{rem}\label{interchangeRemark}
 Using property $(i)$, it is routine to check that $\dot\gamma_i(t\pm)$ both exist for each $t>0$ and $i=1,\dots, N$. Moreover, 
$$
\dot\gamma_i(t\pm)=\lim_{h\rightarrow 0^\pm}\frac{\gamma_i(t+h)-\gamma_i(t)}{h}.
$$
\end{rem}

\par A corollary of property $(iv)$ above is the what we call the {\it averaging property}. It is a general assertion about the conservation 
of momentum and is stated as follows.
\begin{cor}[Proposition 2.6 of \cite{Hynd2}]\label{AveragingProp}
Suppose $g:\R\rightarrow \R$ and $0\le s<t$. Then 
\begin{align}\label{AveragingProp}
\sum^N_{i=1}m_ig(\gamma_i(t))\dot\gamma_i(t+)= \sum^N_{i=1}m_ig(\gamma_i(t))\left[\dot\gamma_i(s+)-\int^t_s\left(\sum^N_{j=1}m_jW'(\gamma_i(\tau)-\gamma_j(\tau))\right)d\tau \right].  \;\;\;
\end{align}
\end{cor}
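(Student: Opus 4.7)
The plan is to fix $t>0$ and $s\in[0,t)$ and study the real-valued function
$$
H(\tau):=\sum_{i=1}^N m_ig(\gamma_i(t))\dot\gamma_i(\tau+),\qquad \tau\in[s,t].
$$
The weights $m_ig(\gamma_i(t))$ do not depend on $\tau$, so the only $\tau$-dependence comes from the right derivatives $\dot\gamma_i(\tau+)$. The strategy is then to apply the fundamental theorem of calculus to $H$ on $[s,t]$.

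First I would use Proposition \ref{StickyParticlesExist}(i) to pick a finite exceptional set $S\subset(s,t)$ outside of which every $\gamma_i$ is $C^2$. On each connected component of $[s,t]\setminus S$ the function $\dot\gamma_i$ is $C^1$ with $\ddot\gamma_i(\tau)=-\sum_{j=1}^N m_jW'(\gamma_i(\tau)-\gamma_j(\tau))$, so $H$ is $C^1$ there with
$$
H'(\tau)=-\sum_{i=1}^N m_ig(\gamma_i(t))\sum_{j=1}^N m_jW'(\gamma_i(\tau)-\gamma_j(\tau)).
$$

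Second, I would show $H$ is continuous across each $\tau_0\in S$. Partition $\{1,\dots,N\}$ into clusters $C_1,\dots,C_r$ by declaring $i\sim j$ iff $\gamma_i(\tau_0)=\gamma_j(\tau_0)$. Property (iii) of Proposition \ref{StickyParticlesExist} ensures that particles together at time $\tau_0$ remain together forever after, so $g(\gamma_i(t))$ takes a single value $g_\ell$ on each $C_\ell$. Applying property (iv) to each maximal cluster and summing over it yields
$$
\sum_{i\in C_\ell}m_i\dot\gamma_i(\tau_0+)=\sum_{i\in C_\ell}m_i\dot\gamma_i(\tau_0-),
$$
since the right derivative of every $i\in C_\ell$ equals the mass-weighted average of left derivatives over $C_\ell$. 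Multiplying by $g_\ell$ and summing over $\ell$ gives $H(\tau_0+)=H(\tau_0-)$; together with Remark \ref{interchangeRemark}, which identifies these one-sided values with $\lim_{\tau\downarrow\tau_0}H(\tau)$ and $\lim_{\tau\uparrow\tau_0}H(\tau)$, this gives continuity of $H$ at $\tau_0$, and hence on all of $[s,t]$.

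With $H$ continuous on $[s,t]$ and piecewise $C^1$ with bounded derivative, the fundamental theorem of calculus yields $H(t)-H(s)=\int_s^tH'(\tau)\,d\tau$, which upon substituting the formula for $H'$ is precisely the claimed identity. The one conceptual step is the second: the weights $g(\gamma_i(t))$ are read off the \emph{final} configuration, yet property (iii) forces them to be constant on every cluster already present at $\tau_0\le t$, and this is exactly what lets the averaging rule (iv) erase the jumps of $H$ across collisions. Once this is seen, the rest is bookkeeping and the fundamental theorem of calculus.
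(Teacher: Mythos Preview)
Your argument is correct. The paper does not give its own proof of this corollary; it merely cites Proposition~2.6 of \cite{Hynd2} and remarks that the identity is ``a corollary of property $(iv)$ above.'' Your write-up makes this remark precise in exactly the expected way: the stickiness property $(iii)$ forces the weights $g(\gamma_i(t))$ to be constant on every cluster present at any $\tau_0\le t$, and then property $(iv)$ (cluster-wise momentum conservation) shows that the piecewise $C^1$ function $H(\tau)=\sum_i m_i g(\gamma_i(t))\dot\gamma_i(\tau+)$ has no jumps across collision times, so the fundamental theorem of calculus applies on $[s,t]$.
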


\subsection{Quantitative stickiness }
Recall our standing assumption that there is a constant $c>0$ chosen so that $W(x)+(c/2)x^2$ is convex.  In terms of this constant,  
we can quantify $(iii)$ in Proposition \ref{StickyParticlesExist}. Namely, we can estimate the distance $|\gamma_i(t)-\gamma_j(t)|$ in terms of 
the distance $|\gamma_i(s)-\gamma_j(s)|$ for $s\le t$. This is why we call the following assertion the {\it quantitative sticky particle property}.
 \begin{prop}[Proposition 2.5 of \cite{Hynd2}]\label{PropQSPP}
For each $i,j=1,\dots,N$ and $0<s\le t$
\be
\frac{|\gamma_i(t)-\gamma_j(t)|}{\sinh(\sqrt{c}t)}
\le \frac{|\gamma_i(s)-\gamma_j(s)|}{\sinh(\sqrt{c}s)}.
\ee
\end{prop}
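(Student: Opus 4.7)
The plan is to reduce the claim to the nonnegativity of a Wronskian-like quantity and then propagate that nonnegativity across collisions by an averaging argument. Fix indices $i\ne j$ and set $\delta(\tau):=\gamma_i(\tau)-\gamma_j(\tau)$ and $\sigma(\tau):=\sinh(\sqrt{c}\,\tau)$. If $\delta(s')=0$ at any $s'\in[s,t]$, property $(iii)$ of Proposition~\ref{StickyParticlesExist} forces $\delta(t)=0$ and the inequality is trivial, so I may assume $\delta$ has constant sign on $[s,t]$, and by symmetry take $\delta>0$. Since
$$\frac{d}{d\tau}\!\left(\frac{\delta(\tau)}{\sigma(\tau)}\right)=-\frac{W(\tau)}{\sigma(\tau)^{2}},\qquad W(\tau):=\delta(\tau)\dot\sigma(\tau)-\dot\delta(\tau)\sigma(\tau),$$
and $\delta/\sigma$ is continuous on $(0,\infty)$, the proposition reduces to showing $W\ge 0$ at each $\tau\in[s,t]$ where $\dot\delta$ exists.

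Between collisions, Newton's equation \eqref{gammaODEt} gives
$$\ddot\delta=-\sum_{\ell=1}^{N}m_\ell\bigl[W'(\gamma_i-\gamma_\ell)-W'(\gamma_j-\gamma_\ell)\bigr].$$
Semiconvexity \eqref{WSemiCon}, equivalent to $x\mapsto W'(x)+cx$ being non-decreasing, yields $W'(\gamma_i-\gamma_\ell)-W'(\gamma_j-\gamma_\ell)\ge -c\delta$ for every $\ell$. Summing against the $m_\ell$ (which total $1$) produces $\ddot\delta\le c\delta$, and combined with $\ddot\sigma=c\sigma$ this gives
$$\dot W(\tau)=\sigma(\tau)\bigl(c\delta(\tau)-\ddot\delta(\tau)\bigr)\ge 0$$
on every inter-collision subinterval. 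Taking $\tau\to 0^+$ gives $W(0^+)=\sqrt{c}\,(x_i-x_j)>0$, so $W$ starts strictly positive.

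The main obstacle is that $W$ is not monotone across collisions: for instance, the rightmost particle in a merging cluster accelerates, making $\dot\delta$ jump upward and hence $W$ jump downward. My plan is to proceed by strong induction on the (finitely many) collision times $0<\tau_1<\tau_2<\cdots$, tracking all pair quantities $W_{k\ell}$ simultaneously. At a collision $\tau_0$, let $I$ be the set of indices whose pre-collision clusters merge at $\tau_0$ into the post-collision cluster containing $i$, and first assume $j\notin I$ and that $j$'s cluster is unaffected at $\tau_0$. Using $\gamma_k(\tau_0)=\gamma_i(\tau_0)$ for each $k\in I$ together with the averaging rule
$$\dot\gamma_i(\tau_0+)=\frac{1}{M_I}\sum_{k\in I}m_k\,\dot\gamma_k(\tau_0-),\qquad M_I:=\sum_{k\in I}m_k,$$
from Proposition~\ref{StickyParticlesExist}$(iv)$, a direct rearrangement produces the averaging identity
$$W_{ij}(\tau_0+)=\frac{1}{M_I}\sum_{k\in I}m_k\,W_{kj}(\tau_0-);$$
if $j$ also lies in a colliding cluster $J$, one averages a second time over $\ell\in J$. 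Since sticky particles never un-merge, each $k\in I$ and $j$ have been in distinct clusters throughout $[0,\tau_0)$, so $\delta_{kj}$ does not vanish there, and the inductive hypothesis gives $W_{kj}(\tau_0-)\ge 0$. Hence $W_{ij}(\tau_0+)\ge 0$, and combining this with the inter-collision monotonicity and the positive initial value $W(0^+)$ propagates $W_{ij}\ge 0$ to every $\tau>0$, which is the proposition.
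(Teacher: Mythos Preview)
The paper does not include a proof of this proposition; it is quoted from \cite{Hynd2}. Your argument is correct. The averaging identity $W_{ij}(\tau_0+)=M_I^{-1}\sum_{k\in I}m_k\,W_{kj}(\tau_0-)$ is the right observation for handling collisions of non-adjacent pairs, and the strong induction on collision times---tracking all separated pairs simultaneously---closes the loop. One small clarification: your reduction to ``$\delta$ has constant sign on $[s,t]$'' actually yields constant sign on all of $(0,t]$, since stickiness forces $\delta(s)=0$ whenever $\delta$ vanishes at any earlier time; this is what justifies anchoring the Wronskian at $W(0^+)=\sqrt c\,(x_i-x_j)>0$.

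A shorter route, parallel to the paper's proof of Proposition~\ref{StabilityProp}, is also available. For \emph{adjacent} trajectories $\gamma_{i+1}>\gamma_i$ the paper establishes (in that proof) the jump inequality $\dot\delta(s+)\le\dot\delta(s-)$ at every collision time before the two merge; combined with $\ddot\delta\le c\delta$ this gives $W(s+)\ge W(s-)$ directly, so $W$ is globally nondecreasing and no induction over other pairs is needed. The general case then follows by telescoping: after ordering $x_1<\cdots<x_N$ one has $\gamma_i-\gamma_j=\sum_{\ell=j}^{i-1}(\gamma_{\ell+1}-\gamma_\ell)$, and each summand divided by $\sinh(\sqrt c\,\cdot)$ is nonincreasing. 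Your averaging argument treats non-adjacent pairs in one stroke, at the price of the simultaneous induction; the adjacent-plus-telescoping route avoids that bookkeeping but relies on the order structure of the line.
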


\par An immediate corollary is as follows. 
\begin{prop}\label{measurabilityProp}
For each $0<s\le t$, there is a function $f_{t,s}:\R\rightarrow \R$ for which 
$$
\gamma_i(t)=f_{t,s}(\gamma_i(s))
$$
for $i=1,\dots, N$ and 
\be\label{fteessLip}
|f_{t,s}(x)-f_{t,s}(y)|\le \frac{\sinh(\sqrt{c}t)}{\sinh(\sqrt{c}s)}|x-y|
\ee
for $x,y\in \R.$ 
\end{prop}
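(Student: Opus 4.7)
The plan is to define $f_{t,s}$ first on the finite set $S_s := \{\gamma_1(s),\dots,\gamma_N(s)\}\subset \R$ by the formula $f_{t,s}(\gamma_i(s)) := \gamma_i(t)$, and then extend it to all of $\R$ by a Lipschitz extension theorem. The key observation is that Proposition \ref{PropQSPP} delivers, in one inequality, both the \emph{well-definedness} of $f_{t,s}$ on $S_s$ and the Lipschitz bound \eqref{fteessLip} on $S_s$.

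More concretely, I would first argue well-definedness. Suppose $\gamma_i(s)=\gamma_j(s)$ for some $i,j$. Then Proposition \ref{PropQSPP} gives
\[
\frac{|\gamma_i(t)-\gamma_j(t)|}{\sinh(\sqrt c\, t)} \le \frac{|\gamma_i(s)-\gamma_j(s)|}{\sinh(\sqrt c\, s)} = 0,
\]
so $\gamma_i(t)=\gamma_j(t)$. Hence the assignment $\gamma_i(s)\mapsto \gamma_i(t)$ depends only on $\gamma_i(s)$, and $f_{t,s}:S_s\to\R$ is a well-defined function. Applying Proposition \ref{PropQSPP} again to arbitrary indices $i,j$ yields
\[
|f_{t,s}(\gamma_i(s))-f_{t,s}(\gamma_j(s))| = |\gamma_i(t)-\gamma_j(t)| \le \frac{\sinh(\sqrt c\, t)}{\sinh(\sqrt c\, s)}|\gamma_i(s)-\gamma_j(s)|,
\]
so $f_{t,s}$ is Lipschitz on $S_s$ with the desired constant $L := \sinh(\sqrt c\, t)/\sinh(\sqrt c\, s)$.

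Next, I would extend $f_{t,s}$ from the finite set $S_s$ to all of $\R$ without increasing the Lipschitz constant, using McShane's extension formula: for $x\in\R$, set
\[
f_{t,s}(x) := \min_{1\le i\le N}\bigl(\gamma_i(t) + L\,|x-\gamma_i(s)|\bigr).
\]
A direct check shows this agrees with the earlier definition on $S_s$ (the minimum is attained by the index realizing $\gamma_i(s)=x$, using the Lipschitz bound on $S_s$) and is $L$-Lipschitz on all of $\R$, as a minimum of $L$-Lipschitz affine functions.

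The only potentially subtle step is verifying that the McShane formula recovers the prescribed values on $S_s$; this comes down to the observation that for any indices $i,j$ with $\gamma_i(s) = x$, the Lipschitz bound gives $\gamma_j(t) + L|x - \gamma_j(s)| \ge \gamma_j(t) + |\gamma_i(t) - \gamma_j(t)| \ge \gamma_i(t)$. Beyond that, the argument is essentially immediate from Proposition \ref{PropQSPP}, which is why this is labeled a corollary.
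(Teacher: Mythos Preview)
Your proof is correct and follows essentially the same approach as the paper: define $f_{t,s}$ on the finite set $\{\gamma_1(s),\dots,\gamma_N(s)\}$, use the quantitative sticky particle property (Proposition \ref{PropQSPP}) to get well-definedness and the Lipschitz bound, and then extend to all of $\R$. The only cosmetic differences are that the paper cites property $(iii)$ of Proposition \ref{StickyParticlesExist} rather than Proposition \ref{PropQSPP} for well-definedness, and it does not spell out the McShane formula explicitly.
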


\begin{proof}
By property $(iii)$ of Proposition \ref{StickyParticlesExist}, the cardinality of the set
$$
\{\gamma_1(t),\dots,\gamma_N(t)\}
$$
is nonincreasing in $t$. It follows that there is a surjective function 
$$
g_{t,s}:\{\gamma_1(s),\dots,\gamma_N(s)\}\rightarrow \{\gamma_1(t),\dots,\gamma_N(t)\}; \gamma_i(s)\mapsto \gamma_i(t)
$$
for $0< s\le t$. By the quantitative sticky particle property, $g_{t,s}$ satisfies the Lipschitz condition \eqref{fteessLip}. We can then extend $g_{t,s}$ to all of $\R$ in order to obtain the desired Lipschitz function $f_{t,s}$. 
\end{proof}

\subsection{Energy estimates}
Sticky particle trajectories have nonincreasing energy. That is, 
\be\label{ContNonincreaseEnergy}
\frac{1}{2}\sum^N_{i=1}m_i\dot\gamma_i(t+)^2+
\frac{1}{2}\sum^N_{i,j=1}m_im_jW(\gamma_i(t)-\gamma_j(t))\le \frac{1}{2}\sum^N_{i=1}m_i\dot\gamma_i(s+)^2+
\frac{1}{2}\sum^N_{i,j=1}m_im_jW(\gamma_i(s)-\gamma_j(s))
\ee
for $0\le s<t$ (Proposition 2.8 of \cite{Hynd2}).  Using the semiconvexity of $W$, we can derive the subsequent kinetic energy estimates.  We will express this result in terms of the increasing function 
$$
\vartheta(t):=e^{(c+1)t^2}\int^t_{0}e^{-(c+1)s^2}ds,\quad t\ge 0.
$$

\begin{lem}\label{EnergyEst}
For each $t\ge 0$,
\be\label{DiscreteEnergyEst1}
\int^t_{0}\sum^N_{i=1}m_i\dot\gamma_i(s)^2ds\le \left(\sum^N_{i=1}m_iv_i^2+\frac{1}{2}\sum^N_{i,j=1}m_im_jW'(x_i-x_j)^2 \right)\vartheta(t).
\ee
And for all but finitely many $t\ge 0$,
\be\label{DiscreteEnergyEst2}
\sum^N_{i=1}m_i\dot\gamma_i(t)^2\le \left(\sum^N_{i=1}m_iv_i^2+\frac{1}{2}\sum^N_{i,j=1}m_im_jW'(x_i-x_j)^2 \right)\vartheta'(t).
\ee
\end{lem}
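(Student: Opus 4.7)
The strategy is to combine the energy inequality \eqref{ContNonincreaseEnergy} with the semiconvexity of $W$ to produce a Gronwall-type integral inequality for $U(t) := \int_0^t K(s)\,ds$, where $K(\tau) := \sum_i m_i \dot\gamma_i(\tau+)^2$. Let $\Phi(\tau) := \sum_{i,j} m_i m_j W(\gamma_i(\tau)-\gamma_j(\tau))$ and $G(0) := \sum_{i,j} m_i m_j W'(x_i-x_j)^2$, so the constant in the lemma is $C := \sum_i m_i v_i^2 + \tfrac12 G(0) = K(0) + \tfrac12 G(0)$. From \eqref{ContNonincreaseEnergy} applied between $0$ and $t$ I have $K(t) \le K(0) + \Phi(0) - \Phi(t)$. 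Semiconvexity of $W$ gives the pointwise inequality $W(x) \ge W(y) + W'(y)(x-y) - \tfrac{c}{2}(x-y)^2$; pivoting at $y = x_i-x_j$ and plugging in $x = \gamma_i(t)-\gamma_j(t)$, then summing against $m_i m_j$ and writing $d_{ij}(t) := (\gamma_i(t)-\gamma_j(t)) - (x_i-x_j)$ and $R(t) := \sum_{i,j} m_i m_j d_{ij}(t)^2$, yields
\[
\Phi(t) \ge \Phi(0) + \sum_{i,j} m_i m_j W'(x_i-x_j)\, d_{ij}(t) - \tfrac{c}{2} R(t).
\]
The $\Phi(0)$'s cancel, and termwise Young $|ab|\le \tfrac12 a^2 + \tfrac12 b^2$ on the cross term gives
\[
K(t) \le K(0) + \tfrac12 G(0) + \tfrac{c+1}{2} R(t) = C + \tfrac{c+1}{2} R(t).
\]

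Since $\gamma_i$ is absolutely continuous, $d_{ij}(t) = \int_0^t (\dot\gamma_i-\dot\gamma_j)(s)\,ds$, so Cauchy--Schwarz in time gives $d_{ij}(t)^2 \le t \int_0^t (\dot\gamma_i-\dot\gamma_j)^2\,ds$. Summing and using $\sum_{i,j} m_i m_j (\dot\gamma_i-\dot\gamma_j)^2 = 2K(s) - 2(\sum_i m_i\dot\gamma_i)^2 \le 2 K(s)$ yields $R(t) \le 2t\, U(t)$. Inserted into the previous display, this reads
\[
K(t) \le C + (c+1)t\, U(t) \le C + 2(c+1)t\, U(t),
\]
i.e.\ $U'(t) \le C + 2(c+1)t\, U(t)$ almost everywhere. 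Multiplying by the integrating factor $e^{-(c+1)t^2}$ and integrating from $0$ to $t$, noting $U(0)=0$, gives $U(t) \le C e^{(c+1)t^2}\int_0^t e^{-(c+1)s^2}\,ds = C\vartheta(t)$, which is \eqref{DiscreteEnergyEst1}. Feeding this back into $K(t)\le C + 2(c+1)t\, U(t)$ yields $K(t) \le C[1 + 2(c+1)t\vartheta(t)] = C\vartheta'(t)$ at every $t$ where $K$ is continuous, hence at all but finitely many $t$, which is \eqref{DiscreteEnergyEst2}.

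The only real technical point is the choice of pivot $y = x_i - x_j$ when applying semiconvexity: this is what makes $\Phi(0)$ cancel cleanly and leaves only a quadratic excess in the displacements $d_{ij}(t)$, which is exactly what one can control by the time-integrated kinetic energy. Everything else is routine: the energy inequality already absorbs the velocity jumps at collisions (they only help), and the bound on $R(t)$ uses nothing beyond absolute continuity of $\gamma_i$ and Cauchy--Schwarz in time.
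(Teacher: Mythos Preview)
Your proof is correct and follows essentially the same strategy as the paper: semiconvexity of $W$ pivoted at $x_i-x_j$, Young's inequality on the cross term, Cauchy--Schwarz in time to bound displacements by integrated kinetic energy, and then the integrating factor $e^{-(c+1)t^2}$. The only cosmetic difference is that the paper splits $d_{ij}^2\le 2[(\gamma_i-x_i)^2+(\gamma_j-x_j)^2]$ before applying Cauchy--Schwarz, whereas you apply Cauchy--Schwarz to $d_{ij}$ directly and then use the variance identity $\sum_{i,j}m_im_j(\dot\gamma_i-\dot\gamma_j)^2\le 2K$; this actually gives you $K(t)\le C+(c+1)tU(t)$, which you then relax by a factor of two to match the stated $\vartheta$.
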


\begin{proof}
Due to the convexity of $x\mapsto W(x)+(c/2)x^2$,\begin{align*}
W(\gamma_i(t)-\gamma_j(t))&\ge W(x_i-x_j)+ W'(x_i-x_j)(\gamma_i(t)-x_i-(\gamma_j(t)-x_j))\\
&\hspace{2in} -\frac{c}{2}(\gamma_i(t)-x_i-(\gamma_j(t)-x_j))^2 \\
&\ge W(x_i-x_j)- \frac{1}{2}W'(x_i-x_j)^2  -\frac{c+1}{2}(\gamma_i(t)-x_i-(\gamma_j(t)-x_j))^2 \\
&\ge W(x_i-x_j)- \frac{1}{2}W'(x_i-x_j)^2  -(c+1)((\gamma_i(t)-x_i)^2+(\gamma_j(t)-x_j)^2) \\
&\ge W(x_i-x_j)- \frac{1}{2}W'(x_i-x_j)^2 -(c+1)t\left( \int^t_{0}\dot\gamma_i(s)^2ds+\int^t_{0}\dot\gamma_j(s)^2ds \right).
\end{align*}
Combining these lower bounds with \eqref{ContNonincreaseEnergy} at $s=0$ gives
\be\label{SteptoSecondGronwall}
\sum^N_{i=1}m_i\dot\gamma_i(t)^2\le \sum^N_{i=1}m_iv_i^2+\frac{1}{2}\sum^N_{i,j=1}m_im_jW'(x_i-x_j)^2+2(c+1)t\int^t_{0}\sum^N_{i=1}m_i\dot\gamma_i(s)^2ds
\ee
for all but finitely many $t\ge 0$.  As a result,
\begin{align*}
&\frac{d}{dt}e^{-(c+1)t^2}\int^t_{0}\sum^N_{i=1}m_i\dot\gamma_i(s)^2ds\\
&\quad =e^{-(c+1)t^2}\left(\sum^N_{i=1}m_i\dot\gamma_i(t)^2-2(c+1) t\int^t_{0}\sum^N_{i=1}m_i\dot\gamma_i(s)^2ds\right)\\
&\quad \le e^{-(c+1)t^2}\left(\sum^N_{i=1}m_iv_i^2 +\frac{1}{2}\sum^N_{i,j=1}m_im_jW'(x_i-x_j)^2\right)
\end{align*}
for all but finitely many $t\ge 0$. We can then integrate from $0$ to $t$ to derive  \eqref{DiscreteEnergyEst1}. Inequality \eqref{DiscreteEnergyEst2} follows from \eqref{DiscreteEnergyEst1} and \eqref{SteptoSecondGronwall}.
\end{proof}

\subsection{Stability estimate}
We need one more estimate that depends on the following elementary lemma. 

\begin{lem}\label{ODEineqLemma2}
Suppose $T>0$ and $y:[0,T)\rightarrow \R$ is continuous and piecewise $C^2$. Further assume 
\be\label{slopeDecreaseY}
\dot y(t+)\le \dot y(t-)
\ee
for each $t\in (0,T)$ and that there is $c>0$ for which
\be\label{YdoubleprimelessthanY}
\ddot y(t)\le c y(t)
\ee
for all but finitely many $t\in (0,T)$.  Then 
\be
y(t)\le \cosh(\sqrt{c}t)y(0)+\frac{1}{\sqrt{c}}\sinh(\sqrt{c}t)\dot y(0+).
\ee
for $t\in [0,T)$. 
\end{lem}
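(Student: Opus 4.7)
The plan is to compare $y$ directly with the function that appears on the right-hand side. Let
\[
z(t):=\cosh(\sqrt{c}\,t)\,y(0)+\frac{1}{\sqrt{c}}\sinh(\sqrt{c}\,t)\,\dot y(0+),
\]
which is smooth on $[0,T)$ and satisfies $\ddot z=cz$, $z(0)=y(0)$, $\dot z(0)=\dot y(0+)$. Setting $w:=y-z$, the hypotheses transfer to $w$: it is continuous and piecewise $C^2$ with $w(0)=0$, $\dot w(0+)=0$, with $\dot w(t+)\le \dot w(t-)$ at every corner, and with $\ddot w(t)\le c\,w(t)$ at all but finitely many $t$. The goal reduces to showing $w(t)\le 0$ on $[0,T)$.

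The key device is a Wronskian-like quantity tailored to the comparison equation $\ddot u=cu$. Define
\[
N(t):=\dot w(t)\cosh(\sqrt{c}\,t)-\sqrt{c}\,w(t)\sinh(\sqrt{c}\,t),\qquad t\in[0,T).
\]
At any point where $w$ is $C^2$, a direct differentiation gives
\[
\dot N(t)=\bigl(\ddot w(t)-c\,w(t)\bigr)\cosh(\sqrt{c}\,t)\le 0,
\]
so $N$ is non-increasing on each smooth piece. At a corner $t_0$, because $\cosh$ and $\sinh$ are continuous and $w$ is continuous, one has $N(t_0+)-N(t_0-)=\bigl(\dot w(t_0+)-\dot w(t_0-)\bigr)\cosh(\sqrt{c}\,t_0)\le 0$ by hypothesis \eqref{slopeDecreaseY}. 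Hence $N$ is non-increasing on $[0,T)$, and since $N(0)=\dot w(0+)=0$, we conclude $N(t)\le 0$ for every $t\in[0,T)$.

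To extract the bound on $w$, introduce $q(t):=w(t)/\cosh(\sqrt{c}\,t)$, which is well defined and continuous on $[0,T)$ since $\cosh(\sqrt{c}\,t)\ge 1$. On each $C^2$ piece,
\[
\dot q(t)=\frac{N(t)}{\cosh^{2}(\sqrt{c}\,t)}\le 0,
\]
so $q$ is non-increasing on every smooth sub-interval. Continuity of $q$ across the finitely many corners then yields that $q$ is non-increasing on the whole interval $[0,T)$. Since $q(0)=w(0)=0$, we obtain $w(t)\le 0$, i.e.\ $y(t)\le z(t)$, which is the claimed inequality.

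The only substantive subtlety is the bookkeeping at the corners: the one-sided derivatives must be handled so that the jump condition \eqref{slopeDecreaseY} translates into a downward jump of $N$, rather than merely preserving the bound within smooth pieces. Once that is checked, everything reduces to a standard Gronwall/Wronskian comparison, and the rest of the argument is purely algebraic.
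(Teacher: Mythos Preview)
Your proof is correct and is essentially the same approach as the paper's: both arguments reduce to showing that the Wronskian-type quantity $\dot y\cosh(\sqrt{c}t)-\sqrt{c}\,y\sinh(\sqrt{c}t)$ (equivalently, $\dot u\cosh^2$ with $u=y/\cosh$) is nonincreasing across smooth pieces and drops at corners, then integrate. The only cosmetic differences are that you first subtract off the comparison solution $z$ (reducing to $w\le 0$) and treat the corners in one stroke via the jump of $N$, whereas the paper works with $y$ directly and marches interval by interval; the underlying mechanism is identical.
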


\begin{proof} By a routine scaling argument, it suffices to verify this assertion for $c=1$.  To this end, we suppose in addition that there are times $0<t_1<\dots<t_n$ for which $y$ is $C^2$ on the intervals $(0,t_1),\dots, (t_n,T)$. 

\par Define
$$
u(t):=\frac{y(t)}{\cosh(t)}, \quad t\in (0,T)
$$
so that $y(t)=u(t)\cosh(t).$ Observe
$$
\dot u(t)=\frac{\dot y(t)}{\cosh(t)}-\frac{y(t)}{\cosh(t)^2}\sinh(t)
$$
and 
\begin{align*}
\ddot y(t) & = \ddot u(t)\cosh(t)+2\dot u(t)\sinh(t)+u(t)\cosh(t)\\
&=\ddot u(t)\cosh(t)+2\dot u(t)\sinh(t)+y(t)\\
&\le y(t)
\end{align*}
for $t\in (0,T)\setminus\{t_1,\dots, t_n\}$. Consequently, 
\be\label{UdotLessThan}
\frac{d}{dt}\left(\dot u(t)\cosh(t)^2\right)=\cosh(t)\left(\ddot u(t)\cosh(t)+2\dot u(t)\sinh(t)\right)\le 0
\ee
for $t\in (0,T)\setminus\{t_1,\dots, t_n\}$.

\par In view of \eqref{UdotLessThan}, 
$$
\dot u(t)\cosh(t)^2\le \dot u(0+)= \dot y(0+)
$$
for $t\in (0,t_1).$  Multiplying through by $\text{sech}(t)^2$ and integrating from $0$ to $t$ gives 
\be\label{initialEstimateonU}
u(t)\le y(0)+\dot y(0+)\tanh(t)\quad t\in[0,t_1].
\ee
That is, 
\be\label{initialEstimateonY}
y(t)=\cosh(t)u(t)\le 
 \cosh(t)y(0)+\sinh(t)\dot y(0+)
\ee
for  $t\in[0,t_1]$. 

\par By \eqref{slopeDecreaseY} and \eqref{UdotLessThan}, we likewise have
$$
\dot u(t)\cosh(t)^2\le \dot u(t_1+)\cosh(t_1)^2\le \dot u(t_1-)\cosh(t_1)^2\le u(0+)= \dot y(0+)
$$
for $t\in (t_1,t_2)$. Again we multiply through by $\text{sech}(t)^2$ and integrate from $t_1$ to $t\in (t_1,t_2)$ to get  
\begin{align*}
u(t)&\le u(t_1)+ \dot y(0+)(\tanh(t)-\tanh(t_1))\\
&\le y(0)+\dot y(0+)\tanh(t_1)+\dot y(0+)(\tanh(t)-\tanh(t_1))\\
&\le y(0)+\dot y(0+)\tanh(t).
\end{align*}
In particular, \eqref{initialEstimateonY} holds for $t\in [t_1,t_2]$.  We can argue similarly to show that 
\eqref{initialEstimateonY} also holds on the intervals $[t_2,t_3], \dots, [t_n,T)$ .
\end{proof}

This leads to a stability estimate. 
\begin{prop}\label{StabilityProp}
Suppose $i,j\in\{1,\dots, N\}$, $x_i\ge x_j$ and $t\ge 0$. Then 
\be\label{timeZeroEst}
\gamma_i(t)-\gamma_j(t)\le \cosh(\sqrt{c}t)(x_i-x_j)+\frac{1}{\sqrt{c}}\sinh(\sqrt{c}t)\int^{x_i}_{x_j}|v_0'(x)|dx.
\ee
\end{prop}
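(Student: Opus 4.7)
The plan is to apply Lemma \ref{ODEineqLemma2} to $y(t):=\gamma_i(t)-\gamma_j(t)$, after first reducing to the case that $i,j$ are adjacent in the initial ordering. For the reduction, relabel so that $x_1<x_2<\cdots<x_N$; since the $x_k$ are distinct and $x_i\ge x_j$, we may take $i>j$. Writing
\[
\gamma_i(t)-\gamma_j(t)=\sum_{k=j+1}^i\bigl(\gamma_k(t)-\gamma_{k-1}(t)\bigr)
\]
and summing the claimed inequality for adjacent pairs gives the general case via the telescoping identity $\sum_{k=j+1}^i\int_{x_{k-1}}^{x_k}|v_0'|dx=\int_{x_j}^{x_i}|v_0'|dx$. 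So I henceforth assume $j=i-1$.

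To apply Lemma \ref{ODEineqLemma2} on $[0,T)$ with $T:=\inf\{t>0:\gamma_i(t)=\gamma_j(t)\}\in(0,\infty]$, I verify its two hypotheses. Between collisions, semiconvexity of $W$ (equivalently, monotonicity of $x\mapsto W'(x)+cx$) combined with the preservation of order along sticky trajectories ($\gamma_i(t)-\gamma_k(t)\ge\gamma_j(t)-\gamma_k(t)$) yields $W'(\gamma_i-\gamma_k)-W'(\gamma_j-\gamma_k)\ge -c(\gamma_i-\gamma_j)$ for every $k$. Summing against the masses gives
\[
\ddot y(t)=-\sum_{k=1}^N m_k\bigl[W'(\gamma_i-\gamma_k)-W'(\gamma_j-\gamma_k)\bigr]\le c\,y(t).
\]
The delicate hypothesis is $\dot y(s+)\le \dot y(s-)$ at collision times $s\in(0,T)$. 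Because trajectories preserve order, the particles sharing a common position at any instant form a consecutive block of indices. Hence if $i$ enters a collision cluster $S$ at time $s$ but $j=i-1\notin S$, then every element of $S$ has index $\ge i$, so $i$ sits at the bottom of $S$; the ordering argument then shows $\dot\gamma_i(s-)$ is the largest pre-collision slope in $S$, while property (iv) of Proposition \ref{StickyParticlesExist} makes $\dot\gamma_i(s+)$ the mass-weighted average of those slopes, giving $\dot\gamma_i(s+)\le\dot\gamma_i(s-)$. Symmetrically, a collision of $j$ without $i$ places $j$ at the top of its cluster and forces $\dot\gamma_j(s+)\ge\dot\gamma_j(s-)$. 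In every case we obtain $\dot y(s+)\le\dot y(s-)$.

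Lemma \ref{ODEineqLemma2} then produces $y(t)\le\cosh(\sqrt{c}\,t)y(0)+(1/\sqrt{c})\sinh(\sqrt{c}\,t)\dot y(0+)$ on $[0,T)$. Since $y(0)=x_i-x_j$ and absolute continuity of $v_0$ gives $\dot y(0+)=v_0(x_i)-v_0(x_j)=\int_{x_j}^{x_i}v_0'(x)\,dx\le\int_{x_j}^{x_i}|v_0'(x)|\,dx$, this is exactly \eqref{timeZeroEst} on $[0,T)$. For $t\ge T$, property (iii) of Proposition \ref{StickyParticlesExist} forces $\gamma_i(t)=\gamma_j(t)$, so the inequality is trivial because the right-hand side is nonnegative. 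The principal obstacle is verifying the slope-decrease hypothesis, and the adjacency reduction is what makes it tractable: without it, $i$ could lie in the interior of a collision cluster, where its post-collision velocity could be either larger or smaller than before.
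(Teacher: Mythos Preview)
Your proof is correct and follows essentially the same approach as the paper: reduce to adjacent indices via telescoping, set $y=\gamma_i-\gamma_{i-1}$, verify $\ddot y\le cy$ from semiconvexity, establish $\dot y(s+)\le\dot y(s-)$ at collision times by observing that adjacency forces $i$ (resp.\ $i-1$) to sit at the bottom (resp.\ top) of its cluster, and then invoke Lemma~\ref{ODEineqLemma2} on $[0,T)$ with the trivial extension for $t\ge T$. The only cosmetic difference is that the paper bounds $\dot y(0+)$ by $|v_{i}-v_{i-1}|$ first and applies absolute continuity during the telescoping, whereas you apply it immediately.
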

\begin{proof}
Without loss of generality, we may assume $x_1\le\dots\le x_N$ so that the sticky particle trajectories are ordered $\gamma_1\le \dots\le\gamma_N$. Under this assumption, it suffices to verify 
\be\label{timeZeroEstiplusone}
\gamma_{i+1}(t)-\gamma_i(t)\le \cosh(\sqrt{c}t)(x_{i+1}-x_i)+\frac{1}{\sqrt{c}}\sinh(\sqrt{c}t)|v_{i+1}-v_i|
\ee
for $t\ge 0$.  For if $j,k\in \{1,\dots, N\}$ with $k>j$, 
\begin{align}
\gamma_k(t)-\gamma_j(t)&= \sum^{k-1}_{i=j}(\gamma_{i+1}(t)-\gamma_i(t))\\
&\le \sum^{k-1}_{i=j}\left(\cosh(\sqrt{c}t)(x_{i+1}-x_i)+\frac{1}{\sqrt{c}}\sinh(\sqrt{c}t)
|v_{i+1}-v_i|\right)\\
&= \cosh(\sqrt{c}t)(x_k-x_j)+\frac{1}{\sqrt{c}}\sinh(\sqrt{c}t)\sum^{k-1}_{i=j}|v_{i+1}-v_i|\\
&\le  \cosh(\sqrt{c}t)(x_k-x_j)+\frac{1}{\sqrt{c}}\sinh(\sqrt{c}t)\sum^{k-1}_{i=j}\int^{x_{i+1}}_{x_i}|v_0'(x)|dx\\
&= \cosh(\sqrt{c}t)(x_k-x_j)+\frac{1}{\sqrt{c}}\sinh(\sqrt{c}t)\int^{x_{k}}_{x_j}|v_0'(x)|dx.
\end{align}

\par To this end, we fix $i\in\{1,\dots, N\}$ and set
$$
T:=\inf\{t\ge 0: \gamma_{i+1}(t)-\gamma_i(t)=0\}.
$$
In order to verify \eqref{timeZeroEstiplusone}, it is enough to show
\be\label{LasttimeZeroEst}
\gamma_{i+1}(t)-\gamma_i(t)\le \cosh(\sqrt{c}t)(x_{i+1}-x_i)+\frac{1}{\sqrt{c}}\sinh(\sqrt{c}t)|v_{i+1}-v_i|,\quad t\in [0,T].
\ee
We will do so by applying the previous lemma to the restriction of the function
$$
y(t):=\gamma_{i+1}(t)-\gamma_i(t), \quad t\ge 0
$$
to $[0,T)$. In particular, we note that $y(t)=0$ for $t\ge T$ whenever $T$ is finite.

\par We first claim
\be\label{DotGmamaiplusone}
\dot\gamma_{i+1}(s+)\le \dot\gamma_{i+1}(s-).
\ee
Note that if $\gamma_{i+1}$ does not have a first intersection time at $s\in (0,T)$, then $\gamma_{i+1}$ is $C^1$ near $s$ and so 
$$
\dot\gamma_{i+1}(s)=\dot\gamma_{i+1}(s+)=\dot\gamma_{i+1}(s-).
$$
Alternatively let us suppose $\gamma_{i+1}$ has a first intersection time at $s$. As a result, there are trajectories $\gamma_{i+2}, \dots, \gamma_{i+r}$  $(r\ge 2)$ such that 
$$
\gamma_{i+1}(s)=\gamma_{i+2}(s)=\dots= \gamma_{i+r}(s)
$$
and 
\be\label{AveragingForQSPS}
\dot\gamma_{i+j}(s+)=\frac{m_{i+1}\dot\gamma_{i+1}(s-)+\dots + m_{i+r}\dot\gamma_{i+r}(s-)}{m_{i+1}+\dots+m_{i+r}}
\ee
$j=1,\dots, r$. 

\par Also note that as $\gamma_{i+1}\le \gamma_{i+j}$ for $j=2,\dots, r$,
$$
\frac{\gamma_{i+1}(s+h)-\gamma_{i+1}(s)}{h}\ge \frac{\gamma_{i+j}(s+h)-\gamma_{i+j}(s)}{h}
$$
for all $h<0$ small. By Remark \ref{interchangeRemark}, we can send $h\rightarrow 0^-$ and conclude
$$
\dot\gamma_{i+1}(s-)\ge \dot\gamma_{i+j}(s-).
$$
It then follows from \eqref{AveragingForQSPS} (with $j=1$) that 
 $$
\dot \gamma_{i+1}(s+)\le\frac{m_{i+1}\dot\gamma_{i+1}(s-)+\dots + m_{i+r}\dot\gamma_{i+1}(s-)}{m_{i+1}+\dots+m_{i+r}}=\dot\gamma_{i+1}(s-),
 $$
  which is \eqref{DotGmamaiplusone}. A similar argument gives 
\be\label{DotGmamajusti}
\dot\gamma_{i}(s+)\ge \dot\gamma_{i}(s-)
\ee
for each $s\in (0,T)$. Combining \eqref{DotGmamaiplusone} and \eqref{DotGmamajusti} 
 \be\label{dotYnotgoingup}
 \dot y(s+)=\dot\gamma_{i+1}(s+)-\dot\gamma_{i}(s+)\le \dot\gamma_{i+1}(s-)-\dot\gamma_{i}(s-)= \dot y(s-)
 \ee
for all $s\in (0,T)$.

\par As $x\mapsto W'(x)+cx$ is nondecreasing, 
\begin{align*}
\ddot y(t)&=\ddot\gamma_i(t)-\ddot\gamma_j(t)\\
&=-\sum_{k=1}^Nm_k\left(W'(\gamma_i(t)-\gamma_k(t))-W'(\gamma_j(t)-\gamma_k(t))\right) \\
&\le \sum_{k=1}^Nm_k c\left(\gamma_i(t)-\gamma_j(t)\right)\\
&= c(\gamma_i(t)-\gamma_j(t))\\
&=y(t)
\end{align*}
for all but finitely many $t>0$.  Therefore, \eqref{LasttimeZeroEst} follows from Lemma \ref{ODEineqLemma2}.  
\end{proof}

\subsection{Associated trajectory map}
We are now ready to show how to design a solution of \eqref{FlowMapEqn} with $\rho_0$ given by \eqref{discreterhozero}.  For $t\ge0$, we define 
$$
X(t): \{x_1,\dots, x_N\}\rightarrow \R; x_i\mapsto \gamma_i(t).
$$
We will also write
$$
X(x_i,t)=\gamma_i(t)
$$
for $ i=1,\dots, N$ and $t\ge 0$.  The following proposition details all the important features of $X$.

\begin{prop}\label{DiscreteSolnProp}
The mapping $X$ has the following properties. 

\begin{enumerate}[(i)]

\item $X(0)=\textup{id}_\R$ and 
\be
\dot X(t)=\E_{\rho_0}\left[v_0-\displaystyle\int^t_0(W'*\rho_s)(X(s))ds\bigg| X(t)\right]
\ee
for all but finitely many $t\ge 0$. Both equalities hold on the support of $\rho_0$. 

\item $E(t)\le E(s)$, for $s\le t$. Here 
\begin{align*}
E(\tau):=\int_{\R}\frac{1}{2}\dot X(\tau+)^2d\rho_0+\int_{\R}\int_{\R}\frac{1}{2}W(X(y,\tau)-X(z,\tau))d\rho_0(y)d\rho_0(z).
\end{align*}

\item $X: [0,\infty)\rightarrow L^2(\rho_0); t\mapsto X(t)$ is locally Lipschitz continuous.

\item For $t\ge 0$ and $y,z\in\textup{supp}(\rho_0)$ with $y\le z$, 
$$
0\le X(z,t)-X(y,t)\le \cosh(\sqrt{c}t)(z-y)+\frac{1}{\sqrt{c}}\sinh(\sqrt{c}t)\int^z_y|v_0'(x)|dx.
$$

\item For each $0<s\le t$ and $y,z\in\textup{supp}(\rho_0)$
$$
\frac{|X(y,t)-X(z,t)|}{\sinh(\sqrt{c}t)}\le \frac{|X(y,s)-X(z,s)|}{\sinh(\sqrt{c}s)}.
$$

\item For each $0<s\le t$, there is a function $f_{t,s}:\R\rightarrow \R$ which satisfies the Lipschitz condition \eqref{fteessLip} 
and
$$
X(y,t)=f_{t,s}(X(y,s))
$$
for $y\in\textup{supp}(\rho_0)$.
\end{enumerate}

\end{prop}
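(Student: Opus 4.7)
The plan is to verify the six properties essentially in the order they are listed, leaning heavily on the preceding propositions since most of them package results already proved in this section.

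First, the easy items. Property $(ii)$ is exactly the energy inequality \eqref{ContNonincreaseEnergy} rewritten via the discrete measure $\rho_0=\sum m_i\delta_{x_i}$, using that $\dot X(\tau+)^2$ at $y=x_i$ equals $\dot\gamma_i(\tau+)^2$. Property $(iv)$ is Proposition \ref{StabilityProp} rewritten with $y=x_j$, $z=x_i$ and the convention $X(x_i,t)=\gamma_i(t)$; I note that the support of $\rho_0$ is exactly $\{x_1,\dots,x_N\}$, so no extension is needed. Property $(v)$ is an immediate rephrasing of Proposition \ref{PropQSPP}, and property $(vi)$ is an immediate rephrasing of Proposition \ref{measurabilityProp}.

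Next, for $(iii)$, I estimate
\[
\|X(t)-X(s)\|_{L^2(\rho_0)}^2=\sum_{i=1}^N m_i(\gamma_i(t)-\gamma_i(s))^2\le \sum_{i=1}^N m_i(t-s)\int_s^t\dot\gamma_i(\tau)^2d\tau
\]
by Cauchy--Schwarz (using that $\gamma_i$ is piecewise $C^2$, hence absolutely continuous). Interchanging sum and integral and applying the kinetic energy bound \eqref{DiscreteEnergyEst1} of Lemma \ref{EnergyEst}, I get $\|X(t)-X(s)\|_{L^2(\rho_0)}^2\le (t-s)^2 C_T$ on $[0,T]$, which is the required local Lipschitz estimate.

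The main content is $(i)$. The initial condition $X(0)=\textup{id}_\R$ on $\textup{supp}(\rho_0)$ is immediate from $\gamma_i(0)=x_i$. For the flow equation, fix a time $t>0$ which is not a collision time, and let $g\in C_b(\R)$ be arbitrary. Because $\rho_0$ is discrete, the averaging property (Corollary \ref{AveragingProp}) with $s=0$ gives
\[
\int_{\R}g(X(t))\dot X(t+)d\rho_0=\sum_{i=1}^N m_i g(\gamma_i(t))\Bigl[v_i-\int_0^t\sum_{j=1}^N m_j W'(\gamma_i(\tau)-\gamma_j(\tau))d\tau\Bigr],
\]
and the inner sum over $j$ is precisely $(W'*\rho_\tau)(X(x_i,\tau))$ because $\rho_\tau=X(\tau)_{\#}\rho_0=\sum m_j\delta_{\gamma_j(\tau)}$. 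Thus the right-hand side equals $\int g(X(t))[v_0-\int_0^t(W'*\rho_\tau)(X(\tau))d\tau]d\rho_0$. This is the integrand identity \eqref{IntegralCondExpCond} characterizing the conditional expectation. To finish, I must check the measurability clause: $\dot X(t+)$ must agree, $\rho_0$-a.e., with a Borel function of $X(t)$. This is the step I expect to be the main (though still mild) obstacle. I handle it using property $(iv)$ of Proposition \ref{StickyParticlesExist}: if $\gamma_i(t)=\gamma_j(t)$ then the averaging rule forces $\dot\gamma_i(t+)=\dot\gamma_j(t+)$, so the value of $\dot X(t+)$ at $x_i$ depends only on $\gamma_i(t)=X(x_i,t)$. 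Defining $u:\R\to\R$ to be this common post-collision velocity on $\{\gamma_1(t),\dots,\gamma_N(t)\}$ and zero elsewhere yields a Borel (in fact simple) function with $\dot X(t+)=u(X(t))$ on $\textup{supp}(\rho_0)$. Together with the integral identity, this verifies the flow equation in the sense described in the preliminaries for all but finitely many $t\ge 0$.
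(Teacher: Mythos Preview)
Your proof is correct and follows essentially the same route as the paper: parts $(ii)$, $(iv)$, $(v)$, $(vi)$ are direct citations of the preceding propositions, part $(i)$ uses Corollary~\ref{AveragingProp} for the integral identity together with Proposition~\ref{StickyParticlesExist}$(iv)$ to build the Borel function $u$ with $\dot X(t+)=u(X(t))$, and part $(iii)$ goes via Cauchy--Schwarz and the kinetic energy estimate. One minor slip in part $(iii)$: citing \eqref{DiscreteEnergyEst1} alone only yields $\|X(t)-X(s)\|_{L^2(\rho_0)}^2\le (t-s)\,C_T$, i.e.\ local $\tfrac12$-H\"older continuity, since \eqref{DiscreteEnergyEst1} bounds $\int_0^t$ but not $\int_s^t$ by something of order $t-s$; to obtain the $(t-s)^2C_T$ bound you claim (and hence local Lipschitz continuity) you need the pointwise estimate \eqref{DiscreteEnergyEst2}, which gives $\int_s^t\sum_i m_i\dot\gamma_i(\tau)^2\,d\tau\le C(\vartheta(t)-\vartheta(s))$---this is exactly what the paper invokes.
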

\begin{proof}
\underline{Part $(i)$}: As $X(x_i,0)=x_i$, 
$$
X(0)=\id
$$
on $\text{supp}(\rho_0)$. Also note that if $g:\R\rightarrow \R$ and $t\ge 0$, then Corollary \ref{AveragingProp} gives
\begin{align*}
\int_{\R}g(X(t))\dot X(t+)d\rho_0&=\sum^N_{i=1}m_ig(\gamma_i(t))\dot\gamma_i(t+)\\
&= \sum^N_{i=1}m_ig(\gamma_i(t))\left[\dot\gamma_i(0+)-\int^t_0\left(\sum^N_{j=1}m_jW'(\gamma_i(\tau)-\gamma_j(\tau))\right)d\tau \right]\\
&=\int_{\R}g(X(t))\left[v_0-\displaystyle\int^t_0(W'*\rho_\tau)(X(\tau))d\tau\right]d\rho_0.
\end{align*}
In particular,  
$$
\int_{\R}g(X(t))\dot X(t)d\rho_0=\int_{\R}g(X(t))\left[v_0-\displaystyle\int^t_0(W'*\rho_\tau)(X(\tau))d\tau\right]d\rho_0
$$
for all but finitely many $t\ge 0$. 

\par Define 
\be
\begin{cases}
v(x,t)=\dot\gamma_i(t+), \quad &x=\gamma_i(t)\\
\hspace{.48in}=0, \quad & \text{otherwise}.
\end{cases}
\ee
By parts $(iii)$ and $(iv)$ of Proposition \ref{StickyParticlesExist}, $v$ is well defined. Moreover, it is routine to check that $v:\R\times[0,\infty)\rightarrow \R$ is Borel measurable. Furthermore,
$$
\dot X(t)=v(X(t),t)
$$
on the support of $\rho_0$ for all but finitely many $t\ge 0$. It follows that $X$ satisfies the pressureless Euler flow equation \eqref{FlowMapEqn} for all but finitely many $t\ge 0$.

\par \underline{Part $(ii)$}: In view of \eqref{ContNonincreaseEnergy},
\begin{align*}
E(t)&=\int_{\R}\frac{1}{2}\dot X(t+)^2d\rho_0+\int_{\R}\int_{\R}\frac{1}{2}W(X(y,t)-X(z,t))d\rho_0(y)d\rho_0(z)\\
&=\frac{1}{2}\sum^N_{i=1}m_i\dot\gamma_i(t+)^2+\frac{1}{2}\sum^N_{i,j=1}m_im_jW(\gamma_i(t)-\gamma_j(t))\\
&\le \frac{1}{2}\sum^N_{i=1}m_i\dot\gamma_i(s+)^2+\frac{1}{2}\sum^N_{i,j=1}m_im_jW(\gamma_i(s)-\gamma_j(s))\\
&=\int_{\R}\frac{1}{2}\dot X(t+)^2d\rho_0+\int_{\R}\int_{\R}\frac{1}{2}W(X(y,t)-X(z,t))d\rho_0(y)d\rho_0(z)\\
&=E(s).
\end{align*}

\par \underline{Part $(iii)$}: By the energy estimate \eqref{DiscreteEnergyEst2}, 
\begin{align}\label{LipEst}
\int_{\R}(X(t)-X(s))^2d\rho_0&\le (t-s)\int^t_s\int_{\R}\dot X(\tau)^2d\rho_0 d\tau\\
&= (t-s)\int^t_s\sum^N_{i=1}m_i\dot\gamma_i(\tau)^2d\tau\\
&\le (t-s)(\vartheta(t)-\vartheta(s))\left(\sum^N_{i=1}m_iv_i^2+\frac{1}{2}\sum^N_{i,j=1}m_im_jW'(x_i-x_j)^2 \right)\\
&\le (t-s)(\vartheta(t)-\vartheta(s))\left(\int_{\R}v_0^2d\rho_0+\frac{1}{2}\int_{\R}\int_{\R}W'(x-y)^2d\rho_0(x)d\rho_0(y)\right).
\end{align}
Since $\vartheta$ is smooth, $X:[0,\infty)\rightarrow L^2(\rho_0)$ is locally Lipschitz continuous. 

\par \underline{Part $(iv)$, Part $(v)$ and $(vi)$}: Part $(iv)$ follows from Proposition \ref{StabilityProp}, Part $(v)$ is a corollary of Proposition \ref{PropQSPP}, and part $(vi)$ is due to Corollary \ref{measurabilityProp}. 
\end{proof}

\par Observe that as $v_0:\R\rightarrow \R$ is absolutely continuous,  
\be\label{modulusOmega}
\omega(r):=\sup\left\{\int^b_a|v_0'(x)|dx \;:\; 0\le b-a\le r\right\}
\ee
tends to $0$ as $r\rightarrow 0^+$ and is sublinear.  By part $(iv)$ of the above proposition, 
\be\label{XdiscreteUnifCont}
|X(y,t)-X(z,t)|\le \cosh(\sqrt{c}t)|y-z|+\frac{1}{\sqrt{c}}\sinh(\sqrt{c}t)\omega(|y-z|)
\ee
for $y,z$ belonging to the support of $\rho_0$. As a result, $X(t)$ is uniformly continuous on the support of $\rho_0$.  In particular, we may extend $X(t)$ to a uniformly continuous function on $\R$ which satisfies \eqref{XdiscreteUnifCont} and agrees with $X(t)$ on the support of $\rho_0$.  Without any loss of generality, we will identify $X(t)$ with this extension and assume $X(t)$ is uniformly continuous on $\R$.

\section{Existence of solutions}\label{CompactSect}
Now let $\rho_0\in {\cal P}(\R)$ with
\be
\int_{\R}x^2d\rho_0(x)<\infty.
\ee
We can select a sequence $(\rho_0^k)_{k\in \N}\subset {\cal P}(\R)$ such that each $\rho^k_0$ is a convex combination of Dirac measures, $\rho_0^k\rightarrow \rho_0$ narrowly, and
\be\label{rhoQuadraticConv}
\lim_{k\rightarrow\infty}\int_{\R}x^2d\rho^k_0(x)=\int_{\R}x^2d\rho_0(x).
\ee
We recommend, for instance, the reference \cite{Bolley} for a discussion on how to design such a sequence.  

\par In view of Proposition \ref{DiscreteSolnProp}, there is a locally Lipschitz continuous mapping $X^k:[0,\infty)\rightarrow L^2(\rho^k_0)$ which satisfies
\be
\begin{cases}
\dot X^k(t)=\E_{\rho_0^k}\left[v_0-\displaystyle\int^t_0(W'*\rho^k_s)(X^k(s))ds\bigg| X^k(t)\right], \quad a.e.\;t\ge 0\\
X^k(0)=\id
\end{cases}
\ee
for each $k\in \N$.  Here 
$$
\rho^k_t:=X^k(t)_{\#}\rho^k_0, \quad t\ge 0.
$$
In this section, we will show that there is a subsequence of $(X^k)_{k\in \N}$ which converges in an appropriate sense to a solution of the pressureless Euler flow equation \eqref{FlowMapEqn} and satisfies \eqref{xInit}.  Then we will show how to use this solution to generate a corresponding weak solution of the pressureless Euler equations.

\subsection{Compactness}
We will prove that $(X^{k})_{k\in \N}$ has a subsequence which converges in a strong sense.  The limit mapping will be our candidate for a solution of the pressureless Euler flow equation.

\begin{prop}\label{StrongCompactXkayjay}
There is a subsequence $(X^{k_j})_{j\in \N}$ and a locally Lipschitz $X: [0,\infty)\rightarrow L^2(\rho_0)$ such that 
\be\label{StrongConvLimXkayJay}
\lim_{j\rightarrow\infty}\int_{\R}h(\textup{id}_\R,X^{k_j}(t))d\rho^{k_j}_0=\int_{\R}h(\textup{id}_\R,X(t))d\rho_0
\ee
for each $t\ge 0$ and continuous $h:\R^2\rightarrow \R$ with
\be
\sup_{(x,y)\in \R^2}\frac{|h(x,y)|}{1+x^2+y^2}<\infty.
\ee
Furthermore, $X$ has the following properties.  

\begin{enumerate}[(i)]

\item For $t\ge 0$ and $y,z\in\textup{supp}(\rho_0)$ with $y\le z$, 
$$
0\le X(z,t)-X(y,t)\le \cosh(\sqrt{c}t)(z-y)+\frac{1}{\sqrt{c}}\sinh(\sqrt{c}t)\int^z_y|v_0'(x)|dx.
$$

\item For each $0<s\le t$ and $y,z\in\textup{supp}(\rho_0)$
$$
\frac{|X(y,t)-X(z,t)|}{\sinh(\sqrt{c}t)}\le \frac{|X(y,s)-X(z,s)|}{\sinh(\sqrt{c}s)}.
$$

\item For each $0<s\le t$, there is a function $f_{t,s}:\R\rightarrow \R$ which satisfies the Lipschitz condition \eqref{fteessLip} 
and
$$
X(y,t)=f_{t,s}(X(y,s))
$$
for $y\in\textup{supp}(\rho_0)$.
\end{enumerate}

\end{prop}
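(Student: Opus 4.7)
My plan is to work at the level of joint distributions on $\R^2$ so as to avoid the fact that the $X^k(t)$ live in different Hilbert spaces $L^2(\rho_0^k)$. For each $k$ and $t\ge 0$, set
$$
\pi^k_t:=(\textup{id}_{\R},X^k(t))_{\#}\rho^k_0\in \mathcal{P}(\R^2).
$$
Then \eqref{StrongConvLimXkayJay} amounts to convergence of $\pi^{k_j}_t$ to some $\pi_t$ against continuous test functions of quadratic growth, plus the claim that $\pi_t=(\textup{id}_{\R},X(\cdot,t))_{\#}\rho_0$.

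First I would establish uniform bounds. The $x$-second moment of $\pi^k_t$ equals $\int x^2 d\rho^k_0$, which is controlled by \eqref{rhoQuadraticConv}. For the $y$-second moment I would use the $L^2(\rho^k_0)$-Lipschitz estimate \eqref{LipEst} from Proposition \ref{DiscreteSolnProp}(iii): since $X^k(0)=\textup{id}_\R$, $\int X^k(t)^2 d\rho_0^k$ is bounded by $2\int x^2d\rho_0^k + 2t(\vartheta(t)-\vartheta(0))C_0$ for a constant $C_0$ arising from the energy estimate. This gives Prokhorov tightness on any bounded time interval. The same estimate \eqref{LipEst} gives uniform $W_2$-equicontinuity in time: $W_2(\pi^k_t,\pi^k_s)^2\le \int (X^k(t)-X^k(s))^2d\rho^k_0 \le C(t-s)(\vartheta(t)-\vartheta(s))$ on $[0,T]$.

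Next I would perform a Cantor diagonal extraction along a countable dense set of times to obtain a subsequence $(k_j)$ with $\pi^{k_j}_q\to\pi_q$ narrowly for every rational $q$. The uniform $W_2$-equicontinuity then extends this convergence to all $t\ge 0$, yielding a curve $t\mapsto \pi_t$. To upgrade to integration against test functions with quadratic growth I would verify uniform integrability of $1+x^2+y^2$ against $(\pi^{k_j}_t)$, using convergence of the $x$-second moment by \eqref{rhoQuadraticConv} and convergence of the $y$-second moment via the energy estimate combined with the semiconvexity-derived lower bound on the potential energy used in the proof of Lemma \ref{EnergyEst}; then I would appeal to Lemma \ref{LemmaVariantNarrowCon} with $g^k=h$.

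The key structural step is to show $\pi_t$ is supported on the graph of a function. Proposition \ref{DiscreteSolnProp}(iv) says that if $(y_1,z_1),(y_2,z_2)\in\textup{supp}(\pi^{k}_t)$ with $y_1\le y_2$, then
$$
0\le z_2-z_1\le \cosh(\sqrt{c}t)(y_2-y_1)+\tfrac{1}{\sqrt{c}}\sinh(\sqrt{c}t)\int_{y_1}^{y_2}|v_0'(x)|dx.
$$
This monotonicity-with-modulus condition is closed under Hausdorff limits of supports, so it passes to $\textup{supp}(\pi_t)$; in particular setting $y_1=y_2$ forces $z_1=z_2$, so $\pi_t$ is concentrated on a graph. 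Since the first marginal is $\rho_0$, I define $X(\cdot,t):\textup{supp}(\rho_0)\to\R$ by $\pi_t=(\textup{id}_\R,X(\cdot,t))_{\#}\rho_0$, and the inequality above is precisely Part (i). Part (ii) follows by passing Proposition \ref{DiscreteSolnProp}(v) to a co-countable set of $(y,z,s,t)$ using that $X^{k_j}$ accumulates on $X$ in the joint-distribution sense. For Part (iii), the maps $f^k_{t,s}$ from Proposition \ref{DiscreteSolnProp}(vi) are uniformly Lipschitz with constant $\sinh(\sqrt{c}t)/\sinh(\sqrt{c}s)$, so by Arzel\`a-Ascoli a further diagonal extraction gives $f^{k_j}_{t,s}\to f_{t,s}$ locally uniformly for each rational $s\le t$, and the identity $X^{k_j}(y,t)=f^{k_j}_{t,s}(X^{k_j}(y,s))$ passes to the limit. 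Local Lipschitz continuity of $X:[0,\infty)\to L^2(\rho_0)$ is obtained by taking $\liminf$ in \eqref{LipEst}.

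The main obstacle is the mismatch of underlying reference measures $\rho_0^k\ne\rho_0$, which rules out any direct functional-analytic compactness on a fixed Hilbert space. The joint-distribution perspective bypasses this, and then the decisive input is the uniform two-sided bound of Proposition \ref{DiscreteSolnProp}(iv), which is what converts a narrow limit measure into the graph of an honest function $X(\cdot,t)$ with the desired modulus of continuity.
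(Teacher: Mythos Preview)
Your approach via the joint distributions $\pi^k_t=(\textup{id}_\R,X^k(t))_{\#}\rho^k_0$ is the same as the paper's, and your direct argument that the two--point support inequality from Proposition~\ref{DiscreteSolnProp}(iv) passes to $\textup{supp}(\pi_t)$ (forcing $\pi_t$ to lie on a graph) is a clean variant of what the paper does. The paper instead first defines $X(x,t)$ as the barycenter $\int y\,d\zeta^x_t(y)$ of the disintegration of the limit measure, and then separately runs Arzel\`a--Ascoli on the equicontinuous family $(X^{k_j}(t))_j$ of functions on $\R$ (using \eqref{XdiscreteUnifCont} and the pointwise bound \eqref{XkayUpperBound}) to obtain locally uniform convergence $X^{k_j}(t)\to X(t)$, which it then identifies with the barycenter. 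Your route is slightly more direct for identifying the graph; the paper's route buys you explicit locally uniform convergence of the functions, which is used repeatedly downstream in the proof of Theorem~\ref{mainThm}, so you would want to record it as a consequence.

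One genuine imprecision: your justification for uniform integrability of $y^2$ against $(\pi^{k_j}_t)_j$ (``convergence of the $y$-second moment via the energy estimate'') does not work as stated---the energy estimate only yields a uniform $L^2$ bound on $X^k(t)$, which is not enough for uniform integrability. What is actually needed is the pointwise domination $|X^k(y,t)|\le C_T(1+|y|)$ for $t\in[0,T]$, obtained by integrating the spatial modulus \eqref{XdiscreteUnifCont} in $z$ against $\rho^k_0$ and using the $L^1$ bound on $X^k(t)$ (this is the paper's \eqref{XkayUpperBound}); combined with \eqref{rhoQuadraticConv} this gives uniform integrability of $y^2$ and hence the quadratic--growth convergence \eqref{StrongConvLimXkayJay}. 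Also, ``closed under Hausdorff limits of supports'' is slightly loose---narrow convergence only gives that every point of $\textup{supp}(\pi_t)$ is a limit of points in $\textup{supp}(\pi^{k_j}_t)$---but that is exactly the direction you need.
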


\begin{proof}
\underline{1. Weak convergence}: Define
\be
\sigma^k_t:=(\id,X^{k}(t))_{\#}\rho^k_0
\ee
for $t\ge 0$ and $k\in \N$. By \eqref{LipEst}, 
\begin{align*}
\left(\int_{\R}(X^k(t))^2d\rho^k_0\right)^{1/2}&\le\left(\int_{\R}(X^k(t)-X^k(0))^2d\rho_0^k\right)^{1/2}+\left(\int_{\R}(X^k(0))^2d\rho^k_0\right)^{1/2}\\
&\le \sqrt{t\vartheta(t)}\left(\int_{\R}v_0^2d\rho^k_0+\frac{1}{2}\int_{\R}\int_{\R}W'(x-y)^2d\rho^k_0(x)d\rho^k_0(y)\right)^{1/2}\\
&\hspace{1in}+\left(\int_{\R}x^2d\rho^k_0(x)\right)^{1/2}.
\end{align*}
And as $v_0$ and $W'$ grow at most linearly, there are constants $A, B\ge 0$
$$
\left(\int_{\R}(X^k(t))^2d\rho^k_0\right)^{1/2}\le A\sqrt{t\vartheta(t)} +B
$$
for each $t\ge 0$ and $k\in \N$. 

\par It follows that 
\be\label{SecondMomentEst}
\sup_{k\in\N}\iint_{\R^2}(x^2+y^2)d\sigma^k_t(x,y)<\infty
\ee
for each $t\ge 0.$ As a result, $(\sigma^k_t)_{k\in \N}$ is narrowly precompact.  Moreover, for Lipschitz continuous $h:\R^2\rightarrow \R$,
\begin{align*}
\iint_{\R^2}h(x,y)d\sigma^k_t(x,y)-\iint_{\R^2}h(x,y)d\sigma^k_s(x,y)&=\int_{\R}h(\id,X^k(t))d\rho^k_0-\int_{\R}h(\id,X^k(s))d\rho^k_0\\
&\le \text{Lip}(h) \int_{\R}|X^k(t)-X^k(s)| d\rho^k_0\\
&\le \text{Lip}(h) \int^t_s\left(\int_{\R}|\dot X^k(\tau)| d\rho^k_0\right)d\tau\\
&\le \text{Lip}(h) \int^t_s (A\sqrt{\tau\vartheta(\tau)} +B)d\tau.
\end{align*} 
In terms of
the metric \eqref{NarrowMetric}, we then have 
\be
\mathcal{d}(\sigma^k_t,\sigma^k_s)\le  \int^t_s (A\sqrt{\tau\vartheta(\tau)} +B)d\tau
\ee
for $k\in \N$ and $0\le s\le t$. 

\par By the Arzel\`a-Ascoli, there is a subsequence $(\sigma^{k_j})_{j\in \N}$ and a narrowly continuous path $\sigma:[0,\infty)\rightarrow {\cal P}(\R);t\mapsto \sigma_t$ such that  $\sigma^{k_j}_t\rightarrow \sigma_t$ uniformly for $t$ belonging to compact subsets of $[0,\infty)$.  We can also use this narrow convergence and \eqref{SecondMomentEst} to show  
\be
\lim_{j\rightarrow\infty}\iint_{\R^2}|y|d\sigma^{k_j}_t(x,y)=\iint_{\R^2}|y|d\sigma_t(x,y)
\ee
for each $t\ge 0$. Further, 
\be
\int_{\R}\phi d\rho_0 =\lim_{j\rightarrow\infty}\int_{\R}\phi d\rho^{k_j}_0 =\lim_{j\rightarrow\infty}\iint_{\R^2}\phi(x)d\sigma^{k_j}_t(x,y)=\iint_{\R^2}\phi(x)d\sigma_t(x,y)
\ee
for $\phi\in C_b(\R)$.  

\par By the disintegration theorem (Theorem 5.3.1 of \cite{AGS}), there is a family of probability measures $(\zeta^x_t)_{x\in \R}$ for which 
\be
\iint_{\R^2}h(x,y)d\sigma_t(x,y)=\int_{\R}\left(\int_{\R}h(x,y)d\zeta^x_t(y)\right)d\rho_0(x).
\ee
Set 
$$
X(x,t):=\int_{\R}yd\zeta^x_t(y), \quad (x,t)\in \R\times[0,\infty),
$$
and as before we will write $X(t):\R\rightarrow \R; x\mapsto X(x,t)$. Observe
\begin{align}\label{WeakConvXkayJay}
\lim_{j\rightarrow\infty}\int_{\R}\phi X^{k_j}(t)d\rho^{k_j}_0&=\lim_{j\rightarrow\infty}\int_{\R^2 }\phi(x) yd\sigma^{k_j}_t(x,y)\nonumber \\
&=\int_{\R^2 }\phi(x) yd\sigma_t(x,y)\nonumber\\
&=\int_{\R}\left(\int_{\R}\phi(x) yd\zeta^x_t(y)\right)d\rho_0(x)\nonumber\\
&=\int_{\R}\phi(x)\left(\int_{\R}yd\zeta^x_t(y)\right)d\rho_0(x)\nonumber\\
&=\int_{\R}\phi X(t)d\rho_0
\end{align}
for each $t\ge 0$.

\par \underline{2. Strong convergence}:  By \eqref{XdiscreteUnifCont},
\be
|X^k(y,t)-X^k(z,t)|\le \cosh(\sqrt{c}t)|y-z|+\frac{1}{\sqrt{c}}\sinh(\sqrt{c}t)\omega(|y-z|)
\ee
for $y,z\in \R$. As $\lim_{r\rightarrow 0^+}\omega(r)=0$, $(X^k(t))_{k\in \N}$ is uniformly equicontinuous. Moreover, 
\be
|X^k(y,t)|\le |X^k(y,t)-X^k(z,t)|+|X^k(z,t)|
\ee
and integrating over $z\in \R$ gives
\begin{align*}
|X^k(y,t)|&\le \int_{\R}|X^k(y,t)-X^k(z,t)|d\rho^k_0(z)+\int_{\R}|X^k(z,t)|d\rho^k_0(z) \\
&\le \cosh(\sqrt{c}t)\int_{\R}|y-z|d\rho^k_0(z)+\frac{1}{\sqrt{c}}\sinh(\sqrt{c}t) \int_{\R}\omega(|y-z|)d\rho^k_0(z)\\
&\quad +\int_{\R}|X^k(z,t)|d\rho^k_0(z) \\
&\le \cosh(\sqrt{c}t)\int_{\R}|y-z|d\rho^k_0(z)+\frac{1}{\sqrt{c}}\sinh(\sqrt{c}t) \int_{\R}\omega(|y-z|)d\rho^k_0(z)\\
&\quad +A\sqrt{t\vartheta(t)} +B.
\end{align*}
By \eqref{rhoQuadraticConv} and the at most sublinear growth of $\omega$, there are constants $a,b\ge 1$ such that 
\be\label{XkayUpperBound}
|X^k(y,t)|\le a e^{b t^2}(|y|+1)
\ee
for $k\in \N$, $y\in \R$ and $t\ge0$.

\par It follows that a subsequence of $(X^{k_j}(t))_{j\in \N}$ (which we will not relabel) converges locally uniformly on $\R$ to a continuous function $Y:\R\rightarrow \R$. It is easy to check
\be
\lim_{j\rightarrow\infty}\int_{\R}\phi X^{k_j}(t)d\rho^{k_j}_0=\int_{\R}\phi Yd\rho_0
\ee
for each $\phi\in C_b(\R)$. So if $(X^{k_j}(t))_{j\in \N}$ has another subsequence which converges locally uniformly on $\R$ to $Z$, then 
\be
\int_{\R}\phi Yd\rho_0=\int_{\R}\phi Zd\rho_0.
\ee
In particular, $Y=Z$ $\rho_0$ almost everywhere. Since $Y$ and $Z$ are continuous, it is routine to check that $Y=Z$ on the entire support of $\rho_0$.  

\par By \eqref{WeakConvXkayJay}, we also have that $X(t)=Y$ $\rho_0$ almost everywhere. Without loss of generality, we can redefine $X(t)=Y$ to ensure that $X^{k_j}(t)\rightarrow X(t)$ locally uniformly on $\R$. By estimate \eqref{XkayUpperBound} and Proposition \ref{LemmaVariantNarrowCon}, we also have 
\be
\lim_{j\rightarrow\infty}\int_{\R}X^{k_j}(t)^2d\rho^{k_j}_0=\int_{\R}X(t)^2d\rho_0.
\ee
In particular, 
\be
\lim_{j\rightarrow\infty}\iint_{\R^2}(x^2+y^2)d\sigma^{k_j}_t(x,y)=\iint_{\R^2}(x^2+y^2)d\sigma_t(x,y),
\ee
which when combined the narrow converge  $\sigma^{k_j}_t\rightarrow \sigma_t$ in ${\cal P}(\R^2)$ gives \eqref{StrongConvLimXkayJay}. 

\par In view of \eqref{LipEst},  we also have
\begin{align*}
\int_{\R}(X(t)-X(s))^2d\rho_0&=\lim_{j\rightarrow\infty}\int_{\R}(X^{k_j}(t)-X^{k_j}(s))^2d\rho^{k_j}_0\\
&\le (t-s)(\vartheta(t)-\vartheta(s))\lim_{j\rightarrow\infty}\left(\int_{\R}v_0^2d\rho^{k_j}_0+\frac{1}{2}\int_{\R}\int_{\R}W'(x-y)^2d\rho^{k_j}_0(x)d\rho^{k_j}_0(y)\right)\\
&\le (t-s)(\vartheta(t)-\vartheta(s))\left(\int_{\R}v_0^2d\rho_0+\frac{1}{2}\int_{\R}\int_{\R}W'(x-y)^2d\rho_0(x)d\rho_0(y)\right).
\end{align*} 
Here we used that $v_0$ and $W'$ are continuous and grow at most linearly.  Therefore, the mapping $X:[0,\infty)\rightarrow L^2(\rho_0)$ is locally Lipschitz continuous. 

\par \underline{3. Properties of the limit}: Suppose $y,z\in\text{supp}(\rho_0)$ with $y<z$.  As $\rho^{k_j}\rightarrow \rho_0$ narrowly in ${\cal P}(\R)$, 
there are sequences $y^j,z^j\in\text{supp}(\rho^{k_j}_0)$ such that $y^j\rightarrow y$ and $z^j\rightarrow z$ (Proposition 5.1.8 in \cite{AGS}).  Without any loss of generality, we may 
suppose that $y^j<z^j$ for all $j\in \N$ as this occurs for all $j$ large enough. By part $(iv)$ of Proposition \ref{DiscreteSolnProp}, 
\be
0\le X^{k_j}(z^j,t)-X^{k_j}(y^j,t)\le \cosh(\sqrt{c}t)(z^j-y^j)+\frac{1}{\sqrt{c}}\sinh(\sqrt{c}t)\int^{z^j}_{y^j}|v_0'(x)|dx.
\ee
Since $X^{k^j}(t)\rightarrow X(t)$ locally uniformly, we can send $j\rightarrow\infty$ in order to conclude property $(i)$ of this theorem. Property $(ii)$ can be proved similarly. 

\par Now let $0<s\le t$ and $y\in\text{supp}(\rho_0)$. As above, we may select $y^j\in\text{supp}(\rho^{k_j}_0)$ such that $y^j\rightarrow y$. Appealing to part $(vi)$ of Proposition \ref{DiscreteSolnProp}, there is a sequence of functions $(f^{k_j}_{t,s})_{j\in \N}$ in which each element satisfies the Lipschitz condition \eqref{fteessLip} and
\be\label{XkayJayFinalest}
X^{k_j}(y^j,t)=f^{k_j}_{t,s}(X^{k_j}(y^j,s))
\ee
for each $j\in \N$. In particular, 
\begin{align*}
|f^{k_j}_{t,s}(x)|&\le |f^{k_j}_{t,s}(x)-f^{k_j}_{t,s}(X^{k_j}(y^j,s))|+|f^{k_j}_{t,s}(X^{k_j}(y^j,s))|\\
&\le \frac{\sinh(\sqrt{c}t)}{\sinh(\sqrt{c}s)}|x-X^{k_j}(y^j,s)|+|X^{k_j}(y^j,t)|.
\end{align*}
As $X^{k^j}(t)\rightarrow X(t)$ and $X^{k^j}(s)\rightarrow X(s)$ locally uniformly, $|f^{k_j}_{t,s}(x)|$ is bounded above for $x$ belonging to compact subsets of $\R$ independently of $j\in \N$. As a result, $(f^{k_j}_{t,s})_{j\in \N}$ has a locally uniformly convergent subsequence. Therefore, we can send $j\rightarrow\infty$ along an appropriate subsequence in \eqref{XkayJayFinalest} to obtain part $(iii)$.
\end{proof}
For the remainder of this section, let $X$ denote the mapping obtained in Proposition \ref{StrongCompactXkayjay}. 
\begin{cor}\label{dotXfunctionofX}
For Lebesgue almost every $t>0$, there is a Borel function $u:\R\rightarrow \R$ for which 
\be
\dot X(t)=u(X(t))
\ee
$\rho_0$ almost everywhere. 
\end{cor}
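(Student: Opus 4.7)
The plan is to exploit part $(iii)$ of Proposition~\ref{StrongCompactXkayjay}: for $0 < s \le t$, there is a Lipschitz function $f_{t,s}:\R\to\R$ with $X(y,t) = f_{t,s}(X(y,s))$ for all $y\in\textup{supp}(\rho_0)$. I will apply this with the roles of $s$ and $t$ replaced by $t$ and $t+h$ respectively, so that on the $\rho_0$-full measure set $\textup{supp}(\rho_0)$
$$
\frac{X(t+h)-X(t)}{h} \;=\; g_h \circ X(t), \qquad \text{where } g_h := \frac{f_{t+h,t}-\id}{h}.
$$
Each $g_h$ is Lipschitz, hence Borel, so $g_h\circ X(t)$, regarded as an element of $L^2(\rho_0)$, is measurable with respect to $\sigma(X(t))$.

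First I fix $t > 0$ at which $\dot X(t)$ exists in $L^2(\rho_0)$; local Lipschitz continuity of $X:[0,\infty)\to L^2(\rho_0)$ guarantees that Lebesgue almost every $t$ has this property. Next I let $h\to 0^+$: the left-hand side of the displayed identity converges in $L^2(\rho_0)$ to $\dot X(t)$, so $g_h\circ X(t)\to \dot X(t)$ in $L^2(\rho_0)$ as well. Because the $L^2(\rho_0)$ limit of $\sigma(X(t))$-measurable functions remains $\sigma(X(t))$-measurable, $\dot X(t)$ is itself $\sigma(X(t))$-measurable. The Doob--Dynkin lemma then furnishes a Borel $u:\R\to\R$ with $\dot X(t)=u\circ X(t)$ $\rho_0$-almost everywhere, which is the desired conclusion.

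I do not anticipate a serious obstacle here; the argument is really just the observation that part $(iii)$ of Proposition~\ref{StrongCompactXkayjay} already encodes ``$\dot X(t)$ is a function of $X(t)$'' at the level of finite difference quotients, and measurability passes to the $L^2$ limit. The one point requiring mild care is that the identity in part $(iii)$ holds on $\textup{supp}(\rho_0)$ rather than on all of $\R$, but this is harmless for an equality in $L^2(\rho_0)$ since $\rho_0$ is concentrated there. If one prefers to avoid invoking Doob--Dynkin, one can instead pass to a subsequence $h_j\to 0^+$ along which $g_{h_j}\circ X(t)\to \dot X(t)$ $\rho_0$-a.e. and explicitly set $u(x):=\liminf_{j\to\infty}g_{h_j}(x)$ on the Borel set where this liminf is finite (and $0$ elsewhere); for $\rho_0$-a.e. $y$, $X(y,t)$ lies in this set and $u(X(y,t))=\dot X(y,t)$.
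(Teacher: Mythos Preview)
Your proposal is correct and follows essentially the same route as the paper: use part~$(iii)$ of Proposition~\ref{StrongCompactXkayjay} to write the difference quotients as $g_h\circ X(t)$ with $g_h=(f_{t+h,t}-\id)/h$, and then pass to the limit to conclude $\dot X(t)$ is $\sigma(X(t))$-measurable. The only cosmetic difference is that the paper fixes the discrete sequence $h=1/n$, extracts a subsequence converging $\rho_0$-a.e., and works pointwise on a Borel set $S$ of full $\rho_0$-measure, whereas you invoke directly that the $\sigma(X(t))$-measurable functions form a closed subspace of $L^2(\rho_0)$; your ``alternative'' paragraph is precisely the paper's version.
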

\begin{proof}
Let $t$ be a time such that
\be\label{dotXlimit}
\dot X(t)=\lim_{n\rightarrow \infty}n\left(X(t+1/n)-X(t)\right)
\ee
in $L^2(\rho_0)$. We recall that since $X:[0,\infty)\rightarrow L^2(\rho_0)$ is locally Lipschitz, the set of all such $t$ has full measure in $[0,\infty)$. Moreover, we may suppose that the limit \eqref{dotXlimit} holds $\rho_0$ almost everywhere in $\R$ as it does for a subsequence.  By Theorem 1.19 in \cite{Folland}, we may further assume that the limit \eqref{dotXlimit} holds everywhere on some Borel $S\subset \R$ with $\rho_0(S)=1$.  

\par In view of part $(iii)$ of Proposition \ref{StrongCompactXkayjay}, 
$$
\dot X(t)=\lim_{n\rightarrow \infty}u_n\left(X(t)\right)
$$ 
on $S$. Here
$$
u_n:=n(f_{t+1/n,t}-\id)
$$
is a Borel measurable function for each $n\in \N$.  Consequently, $u_n\left(X(t)\right)|_S$ is measurable with respect to the Borel sigma-sub-algebra generated by $X(t)|_S$
$$
{\cal F}:=\{\{y\in S: X(y,t)\in B\}: B\subset \R\;\text{Borel}\}.
$$
As $\dot X(t)|_S$ is a pointwise limit of ${\cal F}$ measurable functions, it is ${\cal F}$ measurable itself (Proposition 2.7 \cite{Folland}). It follows that there is a Borel function $u:\R \rightarrow \R$ such that 
$$
\dot X(t)|_S=u(X(t))|_S.
$$  
That is, $\dot X(t)=u(X(t))$ $\rho_0$ almost everywhere. 
\end{proof}

\begin{proof}[Proof of Theorem \ref{mainThm}] 1. \underline{Initial condition}: The limit \eqref{StrongConvLimXkayJay} taken when $t=0$ implies $X(0)=\id$ $\rho_0$ almost everywhere. 

\par 2.  \underline{Flow equation}: We next claim 
\be\label{SolnGoal}
\int^t_s\int_{\R}\dot X(\tau) h(X(\tau))d\rho_0d\tau=\int^t_s\int_{\R}\left[v_0-\displaystyle\int^\tau_0(W'*\rho_\xi)(X(\xi))d\xi\right]h(X(t))d\rho_0d\tau
\ee
for each continuous $h:\R\rightarrow\R$ which satisfies 
\be
\sup_{x\in \R}\frac{|h(x)|}{1+|x|}<\infty
\ee
and each $0\le s\le t$.  In view of Corollary \ref{dotXfunctionofX}, this would imply that $X$ is a solution of the pressureless Euler flow equation.
To this end, we set $F(y):=\int^y_0 h(x)dx$ for $y\in \R$ and note that $F$ grows at most quadratically.  By Proposition \ref{StrongCompactXkayjay},
\begin{align*}
\lim_{j\rightarrow\infty}\int^t_s\int_{\R}\dot X^{k_j}(\tau) h(X^{k_j}(\tau))d\rho^{k_j}_0d\tau&=\lim_{j\rightarrow\infty}\int_{\R}\left(\int^t_s\dot X^{k_j}(\tau) h(X^{k_j}(\tau))d\tau\right)d\rho^{k_j}_0\\
&=\lim_{j\rightarrow\infty}\int_{\R}\left(\int^t_s\frac{d}{d\tau}F(X^{k_j}(\tau))d\tau\right)d\rho^{k_j}_0\\
&=\lim_{j\rightarrow\infty}\int_{\R}\left(F(X^{k_j}(t))-F(X^{k_j}(s))\right)d\rho^{k_j}_0\\
&=\int_{\R}\left(F(X(t))-F(X(s))\right)d\rho_0.
\end{align*}
Consequently,
\be\label{WeakConvDotXkayjay}
\lim_{j\rightarrow\infty}\int^t_s\int_{\R}\dot X^{k_j}(\tau) h(X^{k_j}(\tau))d\rho^{k_j}_0d\tau=\int^t_s\int_{\R}\dot X(\tau) h(X(\tau))d\rho_0d\tau.
\ee

\par Let us fix $\tau\ge 0$ for the moment and consider the integral
\begin{align*}
&\int_{\R}\left[v_0-\displaystyle\int^\tau_0(W'*\rho^{k_j}_\xi)(X^{k_j}(\xi))d\xi\right] h(X^{k_j}(\tau))d\rho^{k_j}_0\\
&\quad = \int_{\R}v_0 h(X^{k_j}(\tau))d\rho^{k_j}_0-\int^\tau_0\left[\int_{\R}\int_{\R}W'(X^{k_j}(y,\xi)-X^{k_j}(z,\xi)) h(X^{k_j}(y,\tau)) d\rho^{k_j}_0(z)d\rho^{k_j}_0(y)\right]d\xi.
\end{align*}
In view of Proposition \ref{StrongCompactXkayjay} and the at most linear growth of $v_0$, 
\be
\lim_{j\rightarrow\infty}\int_{\R}v_0 h(X^{k_j}(\tau))d\rho^{k_j}_0=\int_{\R}v_0 h(X(\tau))d\rho_0
\ee
as $j\rightarrow\infty$. Also observe
\be
\lim_{j\rightarrow\infty}W'(X^{k_j}(y,\xi)-X^{k_j}(z,\xi)) h(X^{k_j}(y,t))=W'(X(y,\xi)-X(z,\xi)) h(X(y,\tau))
\ee
locally uniformly for $(y,z)\in \R^2$ and each fixed $\xi\in [0,\tau]$. 

\par Choosing $C$ so large that
$$
|h(x)|+|W'(x)|\le C(1+|x|)\quad (x\in \R)
$$
gives
\begin{align}\label{SimpleConvBoundWprime}
&|W'(X^{k_j}(y,\xi)-X^{k_j}(z,\xi)) h(X^{k_j}(y,\tau))|  \nonumber \\ 
&\quad\quad \le C(1+|X^{k_j}(y,\xi)|+ |X^{k_j}(z,\xi)|)\cdot C(1+|X^{k_j}(y,\tau)|)\nonumber  \\
&\quad\quad \le C(1+a e^{b \xi^2}(|y|+1)+ a e^{b \xi^2}(|z|+1))\cdot C(1+a e^{b \tau^2}(|y|+1))\nonumber  \\
&\quad\quad \le C^2(1+a e^{b \tau^2}(|y|+1)+ a e^{b \tau^2}(|z|+1))^2 \nonumber\\
&\quad\quad \le (Ca e^{b \tau^2})^2(3+|y|+|z|)^2 \nonumber \\
&\quad\quad \le (Ca e^{b \tau^2})^22(9+y^2+z^2)\nonumber \\
&\quad\quad \le 18(Ca e^{b \tau^2})^2(1+y^2+z^2)
\end{align}
for $j\in \N$ and $\xi\in [0,\tau]$. Here $a, b$ are the constants from inequality \eqref{XkayUpperBound}.

\par We can then appeal to Proposition \ref{LemmaVariantNarrowCon}  to conclude 
\begin{align}\label{FirstThingWeNeedforKineticEnergyLimit}
&\lim_{j\rightarrow\infty}\int_{\R}\int_{\R}W'(X^{k_j}(y,\xi)-X^{k_j}(z,\xi)) h(X^{k_j}(y,\tau)) d\rho^{k_j}_0(z)d\rho^{k_j}_0(y)\\
&\quad =\int_{\R}\int_{\R}W'(X(y,\xi)-X(z,\xi)) h(X(y,\tau)) d\rho_0(z)d\rho_0(y)
\end{align}
for each $\xi\in [0,\tau]$. What's more, \eqref{SimpleConvBoundWprime} implies 
\be
\left|\int_{\R}\int_{\R}W'(X^{k_j}(y,\xi)-X^{k_j}(z,\xi)) h(X^{k_j}(y,\tau)) d\rho^{k_j}_0(z)d\rho^{k_j}_0(y)\right|
\le 18(Ca e^{b \tau^2})^2\left(1+2\int_{\R}y^2d\rho^{k_j}_0(y)\right)
\ee
for $\xi\in [0,\tau]$. The limit \eqref{rhoQuadraticConv} and a standard variant of dominated convergence (Theorem 1.20 in \cite{EvaGar}) together give
\begin{align}
&\lim_{j\rightarrow\infty}\int^\tau_0\int_{\R}\int_{\R}W'(X^{k_j}(y,\xi)-X^{k_j}(z,\xi)) h(X^{k_j}(y,\tau)) d\rho^{k_j}_0(z)d\rho^{k_j}_0(y)d\xi\\
&\quad =\int^\tau_0\int_{\R}\int_{\R}W'(X(y,\xi)-X(z,\xi)) h(X(y,\tau)) d\rho_0(z)d\rho_0(y)d\xi.
\end{align}

\par As a result, 
\begin{align}\label{HlimitvzeroMinus}
&\lim_{j\rightarrow\infty}\int_{\R}\left[v_0-\displaystyle\int^\tau_0(W'*\rho^{k_j}_\xi)(X^{k_j}(\xi))d\xi\right] h(X^{k_j}(\tau))d\rho^{k_j}_0\\
&\hspace{1in} =\int_{\R}\left[v_0-\displaystyle\int^\tau_0(W'*\rho_\xi)(X(\xi))d\xi\right] h(X(\tau))d\rho_0.
\end{align}
Since
\begin{align*}
&\left|\int_{\R}\left[v_0-\displaystyle\int^\tau_0(W'*\rho^{k_j}_\xi)(X^{k_j}(\xi))d\xi\right] h(X^{k_j}(\tau))d\rho^{k_j}_0\right|\\
&\quad \le \int_{\R}|h(X^{k_j}(\tau))||v_0|d\rho^{k_j}_0+ 
\tau\cdot 18(Ca e^{b \tau^2})^2\left(1+2\int_{\R}y^2d\rho^{k_j}_0(y)\right),
\end{align*}
and the integrals $\int_{\R}|h(X^{k_j}(\tau))||v_0|d\rho^{k_j}_0,\int_{\R}y^2d\rho^{k_j}_0(y)$ are bounded and converge for each $\tau\in [s,t]$ as $j\rightarrow\infty$, we likewise have 
\begin{align*}
&\lim_{j\rightarrow\infty}\int^t_s\int_{\R}\left[v_0-\displaystyle\int^\tau_0(W'*\rho^{k_j}_\xi)(X^{k_j}(\xi))d\xi\right] h(X^{k_j}(\tau))d\rho^{k_j}_0d\tau\\
&\quad =\int^t_s\int_{\R}\left[v_0-\displaystyle\int^\tau_0(W'*\rho_\xi)(X(\tau))d\xi\right] h(X(\tau))d\rho_0d\tau.
\end{align*}
The claim \eqref{SolnGoal} then follows by sending $j\rightarrow\infty$ in 
\be\label{solnconditionXkayjay}
\int^t_s\int_{\R}\dot X^{k_j}(\tau) h(X^{k_j}(\tau))d\rho^{k_j}_0d\tau=\int^t_s\int_{\R}\left[v_0-\displaystyle\int^\tau_0(W'*\rho^{k_j}_\xi)(X^{k_j}(\xi))d\xi\right]h(X^{k_j}(\tau))d\rho^{k_j}_0d\tau.
\ee

\par  3. \underline{Properties of the solution}:  Assertions $(ii)$ and $(iii)$ of this claim follow from parts $(i)$ and $(ii)$ of Proposition \ref{StrongCompactXkayjay}, respectively.  So 
we will now focus on verifying assertion $(i)$ which states that
\be
E(\tau):=\int_{\R}\frac{1}{2}\dot X(\tau)^2d\rho_0+\int_{\R}\int_{\R}\frac{1}{2}W(X(y,\tau)-X(z,\tau))d\rho_0(y)d\rho_0(z)
\ee
is essentially nonincreasing on $[0,\infty)$.  Recall that 
\be
E^j(\tau):=\int_{\R}\frac{1}{2}\dot X^{k_j}(\tau+)^2d\rho^{k_j}_0+\int_{\R}\int_{\R}\frac{1}{2}W(X^{k_j}(y,\tau)-X^{k_j}(z,\tau))d\rho^{k_j}_0(y)d\rho^{k_j}_0(z)
\ee
is nonincreasing by part $(iii)$ of Proposition \ref{StrongCompactXkayjay}.

\par As $X^{k_j}$ solves the pressureless Euler flow equation, we can integrate by parts to find
\begin{align*}
\int^t_s\int_{\R}\dot X^{k_j}(\tau)^2d\rho^{k_j}_0d\tau&= \int^t_s\int_{\R}\dot X^{k_j}(\tau)\left[v_0-\int^\tau_0(W'*\rho^{k_j}_s)(X^{k_j}(s))ds\right]d\rho^{k_j}_0d\tau\\
& = \int_{\R}\left(\int^t_s\dot X^{k_j}(\tau)\left[v_0-\int^\tau_0(W'*\rho^{k_j}_s)(X^{k_j}(s))ds\right]d\tau \right)d\rho^{k_j}_0\\
& = \left.\int_{\R}X^{k_j}(\tau)\left[v_0-\int^\tau_0(W'*\rho^{k_j}_s)(X^{k_j}(s))ds\right] d\rho^{k_j}_0 \right|^{\tau=t}_{\tau=s}\\ 
& \hspace{1in} +\int^t_s\int_{\R} X^{k_j}(\tau)(W'*\rho^{k_j}_\tau)(X^{k_j}(\tau)) d\rho^{k_j}_0d\tau.
\end{align*}
The limit \eqref{FirstThingWeNeedforKineticEnergyLimit} with $\tau=t,s$ and $h=\id$ and the limit \eqref{HlimitvzeroMinus} with $h=\id$ give 
\begin{align}\label{KineticEnergyLimit}
\lim_{j\rightarrow\infty}\int^t_s\int_{\R}\dot X^{k_j}(\tau)^2d\rho^{k_j}_0d\tau&=\left.\int_{\R}X(\tau)\left[v_0-\int^\tau_0(W'*\rho_s)(X(s))ds\right] d\rho_0 \right|^{\tau=t}_{\tau=s}\\ 
& \hspace{1in} +\int^t_s\int_{\R} X(\tau)(W'*\rho_\tau)(X(\tau)) d\rho_0d\tau\\
&=\int^t_s\int_{\R}\dot X(\tau)^2d\rho_0d\tau
\end{align}
for each $0\le s\le t$. 

\par Next, we claim that 
\be\label{SecondPartEnergyConv}
\lim_{j\rightarrow\infty}\int_{\R}\int_{\R}W(X^{k_j}(y,\tau)-X^{k_j}(z,\tau))d\rho^{k_j}_0(y)d\rho^{k_j}_0(z)=\int_{\R}\int_{\R}W(X(y,\tau)-X(z,\tau))d\rho_0(y)d\rho_0(z)
\ee
for each $\tau\ge 0$. Note that $W$ grows at most quadratically. Indeed, 
\be
W(x)\ge W(0)+W'(0)x-\frac{c}{2}x^2
\ee
for $x\in \R$ as $W$ is semiconvex. Likewise
\be
W(0)\ge W(x)+W'(x)(0-x)-c\frac{(0-x)^2}{2},
\ee
and so 
\be
W(x)\le W(0)+xW'(x)+\frac{c}{2}x^2.
\ee
Since $W'(x)$ grows at most linearly, it must be that 
\be\label{Wquadratic}
|W(x)|\le D(1+x^2)
\ee
for some constant $D>0$. 

\par Combining \eqref{Wquadratic} and \eqref{XkayUpperBound}, it is routine to check 
\be\label{WXkayjayUpper}
|W(X^{k_j}(y,\tau)-X^{k_j}(z,\tau))|\le 12D a^2 e^{2b \tau^2}(1+y^2+z^2)
\ee
for each $j\in \N$, $y,z\in \R$, $\tau\ge 0$. Note that 
\be
\lim_{j\rightarrow\infty}W(X^{k_j}(y,\tau)-X^{k_j}(z,\tau))=W(X(y,\tau)-X(z,\tau))
\ee
locally uniformly for $y,z\in \R$ for each $\tau\ge 0$. Also observe that $\rho^{k_j}_0\times \rho^{k_j}_0\rightarrow \rho_0\times \rho_0$ narrowly in ${\cal P}(\R^2)$, and 
\be
\lim_{j\rightarrow\infty}\iint_{\R^2}(y^2+z^2)d\rho^{k_j}_0(y)d\rho^{k_j}_0(z)=\iint_{\R^2}(y^2+z^2)d\rho_0(y)d\rho_0(z),
\ee
which implies that $(y,z)\mapsto y^2+z^2$ is uniformly integrable with respect to $(\rho^{k_j}_0\times \rho^{k_j}_0)_{j\in \N}$. It follows from Lemma \ref{LemmaVariantNarrowCon} that
 \eqref{SecondPartEnergyConv} holds for each $\tau\ge 0$.

\par In view of estimate \eqref{WXkayjayUpper}, we can also apply a standard variant of dominated convergence to find 
\begin{align}\label{SecondPartEnergyConvPart2}
&\lim_{j\rightarrow\infty}\int^t_s\int_{\R}\int_{\R}W(X^{k_j}(y,\tau)-X^{k_j}(z,\tau))d\rho^{k_j}_0(y)d\rho^{k_j}_0(z)d\tau\\
&\hspace{1in} =\int^t_s\int_{\R}\int_{\R}W(X(y,\tau)-X(z,\tau))d\rho_0(y)d\rho_0(z)d\tau
\end{align}
for each $0\le s\le t$. Combining with \eqref{KineticEnergyLimit} gives 
\be\label{EjayLimit}
\lim_{j\rightarrow\infty}\int^t_sE^j(\tau)d\tau=\int^t_sE(\tau)d\tau
\ee
for each $0\le s\le t$. As $E^j:[0,\infty)\rightarrow [0,\infty)$ is uniformly bounded and nonincreasing, $E^j(t)$ also converges for each $t\ge 0$ by Helly's selection theorem (Lemma 3.3.3 in \cite{AGS}). By \eqref{EjayLimit}, $\lim_{j\rightarrow\infty}E^j(t)=E(t)$ for almost every $t\in [0,\infty)$. We then conclude that for almost every $t,s\in[0,\infty)$ with $t\ge s$
$$
E(t)=\lim_{j\rightarrow\infty}E^j(t)\le \lim_{j\rightarrow\infty}E^j(s)=E(s).
$$
\end{proof}

\subsection{Solution of the pressureless Euler equations}
We are now in position to establish the existence of a weak solution of the pressureless Euler equations \eqref{PEE} which satisfy the 
initial conditions \eqref{Init}. These types of solutions are defined as follows. 

\begin{defn}\label{WeakSolnDefn}
A narrowly continuous 
$\rho: [0,\infty)\rightarrow {\cal P}(\R); t\mapsto \rho_t$ and a Borel measurable
$v:\R\times[0,\infty)\rightarrow\R$ is a {\it weak solution pair of the pressureless Euler equations \eqref{PEE}}
which satisfies the initial conditions \eqref{Init} if the following hold. \\
$(i)$ For each $T>0$,
$$
\int^T_0\left\{\int_{\R}v^2d\rho_t+\iint_{\R^2}|W'(x-y)|d\rho_t(x)d\rho_t(y)\right\}dt<\infty.
$$ 
$(ii)$ For each $\phi\in C^\infty_c(\R\times[0,\infty))$,
$$
\int^\infty_0\int_{\R}(\partial_t\phi+v\partial_x\phi)d\rho_tdt+\int_{\R}\phi(\cdot,0)d\rho_0=0.
$$
$(iii)$ For each $\phi\in C^\infty_c(\R\times[0,\infty))$,
$$
\int^\infty_0\int_{\R}(v\partial_t\phi +v^2\partial_x\phi)d\rho_tdt+\int_{\R}\phi(\cdot,0)v_0d\rho_0=\int^\infty_0\int_{\R}\phi(W'*\rho_t)d\rho_tdt.
$$
\end{defn}

\begin{cor}\label{PEEexistCor}
There exists a weak solution pair $\rho$ and $v$ of the pressureless Euler equations \eqref{PEE} which satisfies the initial conditions \eqref{Init}.  Moreover, this solution pair has the following two properties. 
\begin{enumerate}[(i)]

\item For almost every $0\le s\le t$, 
\begin{align*}
&\int_{\R}\frac{1}{2}v(x,t)^2d\rho_t(x)+\iint_{\R^2}\frac{1}{2}W(x-y)d\rho_t(x)d\rho_t(y)\\
&\hspace{1in} \le \int_{\R}\frac{1}{2}v(x,s)^2d\rho_s(x)+\iint_{\R^2}\frac{1}{2}W(x-y)d\rho_s(x)d\rho_s(y).
\end{align*}

\item For almost every $t>0$ and $\rho_t$ almost every $x,y\in \R$,
\be
(v(x,t)-v(y,t))(x-y)\le \frac{\sqrt{c}}{\tanh(\sqrt{c}t)}(x-y)^2.
\ee
\end{enumerate}
\end{cor}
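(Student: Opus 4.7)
The plan is to take the trajectory map $X$ from Theorem \ref{mainThm}, set $\rho_t := X(t)_\#\rho_0$, and let $v:\R\times[0,\infty)\to\R$ be the Borel function supplied by Corollary \ref{dotXfunctionofX}, so that $\dot X(t)=v(X(t),t)$ holds $\rho_0$-a.e.\ for a.e.\ $t\ge 0$. Narrow continuity of $t\mapsto\rho_t$ is inherited from the $L^2(\rho_0)$-continuity of $X$ (convergence in $L^2$ implies convergence in probability, hence narrow convergence of push-forwards). The integrability requirement $(i)$ of Definition \ref{WeakSolnDefn} reduces by push-forward to $\int_0^T\int_\R \dot X(t)^2 d\rho_0\,dt<\infty$, which holds since $X$ is locally Lipschitz into $L^2(\rho_0)$, together with the at-most-linear growth of $W'$ applied against the uniform quadratic moment bound from Proposition \ref{StrongCompactXkayjay}.

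For the weak continuity equation $(ii)$, I would fix $\phi\in C_c^\infty(\R\times[0,\infty))$, push forward to write $\int_\R\phi(x,t)d\rho_t(x)=\int_\R\phi(X(y,t),t)d\rho_0(y)$, and then differentiate in $t$:
\begin{equation*}
\frac{d}{dt}\int_\R\phi(X(t),t)d\rho_0=\int_\R[\partial_t\phi(X(t),t)+\dot X(t)\partial_x\phi(X(t),t)]d\rho_0=\int_\R(\partial_t\phi+v\partial_x\phi)d\rho_t.
\end{equation*}
Integrating in $t$ over $[0,\infty)$ and using the compact support of $\phi$ yields $(ii)$.

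For the weak momentum equation $(iii)$, I would split $\int v\partial_t\phi\,d\rho_t=\int \dot X(t)\partial_t\phi(X(t),t)d\rho_0$ and $\int v^2\partial_x\phi\,d\rho_t=\int\dot X(t)[v(\cdot,t)\partial_x\phi(\cdot,t)](X(t))d\rho_0$, and apply the flow equation in its conditional-expectation form with $g=\partial_t\phi(\cdot,t)$ and $g=v(\cdot,t)\partial_x\phi(\cdot,t)$ respectively (both yield $L^2(\rho_0)$ functions of $X(t)$). Writing $Y(t):=v_0-\int_0^t(W'*\rho_s)(X(s))ds$, this gives
\begin{equation*}
\int_\R(v\partial_t\phi+v^2\partial_x\phi)d\rho_t=\int_\R Y(t)\bigl[\partial_t\phi(X(t),t)+v(X(t),t)\partial_x\phi(X(t),t)\bigr]d\rho_0=\int_\R Y(t)\frac{d}{dt}\phi(X(t),t)d\rho_0.
\end{equation*}
Then I apply Fubini and integrate by parts in $t$ pointwise in $y$: the boundary term at $t=0$ contributes $-\int_\R\phi(\cdot,0)v_0d\rho_0$, and the interior term, using $\dot Y(y,t)=-(W'*\rho_t)(X(y,t))$, yields $\int_0^\infty\int_\R\phi(W'*\rho_t)d\rho_tdt$, which is $(iii)$.

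Property $(i)$ of the corollary follows directly from Theorem \ref{mainThm}$(i)$ upon translating $E(\tau)$ to Eulerian variables via push-forward (kinetic part via $\dot X=v(X)$, potential part via $\rho_t\otimes\rho_t=(X(t),X(t))_\#(\rho_0\otimes\rho_0)$). For property $(ii)$, I would fix $y>z$ in $\textup{supp}(\rho_0)$ and differentiate the monotonicity $t\mapsto(X(y,t)-X(z,t))/\sinh(\sqrt{c}t)$ from Theorem \ref{mainThm}$(iii)$ to get
\begin{equation*}
(\dot X(y,t)-\dot X(z,t))\sinh(\sqrt{c}t)\le\sqrt{c}\cosh(\sqrt{c}t)(X(y,t)-X(z,t))
\end{equation*}
at a.e.\ $t$, then multiply by $X(y,t)-X(z,t)\ge 0$ and substitute $\dot X=v(X)$ to obtain the one-sided Lipschitz estimate, pushed forward to $\rho_t\otimes\rho_t$-a.e.\ $(x,x')$. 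The main obstacle is the pointwise-in-$y$ integration by parts in the momentum step, since $\dot X(t)$ is a priori only defined as an $L^2(\rho_0)$ element for a.e.\ $t$. I would upgrade this to absolute continuity of $t\mapsto X(y,t)$ for $\rho_0$-a.e.\ $y$ by writing $X(t)-X(0)=\int_0^t\dot X(s)ds$ in $L^2(\rho_0)$, using Fubini on $|\dot X(y,s)|$ to obtain $X(y,t)-y=\int_0^t\dot X(y,s)ds$ simultaneously at all rational $t$ off a $\rho_0$-null set, and then invoking continuity of $X(y,\cdot)$ (which holds $\rho_0$-a.e.\ by the remarks following Theorem \ref{mainThm}) to extend to all $t$.
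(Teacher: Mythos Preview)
Your overall strategy matches the paper's: define $\rho_t=X(t)_\#\rho_0$, produce a $v$ with $\dot X(t)=v(X(t),t)$, verify Definition \ref{WeakSolnDefn} by pushing forward and integrating by parts in $t$, then read off the energy and entropy inequalities from Theorem \ref{mainThm}. Your derivations of the continuity and momentum equations, of the energy monotonicity via push-forward of $E(\tau)$, and of the entropy inequality via differentiation of the sticky estimate are all essentially what the paper does (the paper differentiates the square $(X(y,t)-X(z,t))^2/\sinh^2$ rather than the ratio itself, but that is cosmetic). Your explicit treatment of the pointwise-in-$y$ integration by parts---upgrading from the $L^2(\rho_0)$ Bochner identity to absolute continuity of $t\mapsto X(y,t)$ for $\rho_0$-a.e.\ $y$ via Fubini on rational times---is in fact more careful than the paper, which simply asserts this absolute continuity when it needs it.

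The one genuine gap is in how you construct $v$. Corollary \ref{dotXfunctionofX} gives, for Lebesgue-a.e.\ $t$, a Borel $u_t:\R\to\R$ with $\dot X(t)=u_t(X(t))$; it does \emph{not} hand you a single jointly Borel function $v:\R\times[0,\infty)\to\R$, which is what Definition \ref{WeakSolnDefn} requires. Assembling the family $(u_t)$ into a jointly measurable $v$ is not automatic. The paper handles this differently: it defines a signed measure $\nu$ on $\R\times[0,\infty)$ by
\[
\nu(S)=\int_0^\infty\!\!\int_\R\Bigl[v_0-\int_0^t(W'*\rho_s)(X(s))\,ds\Bigr]\chi_S(X(t),t)\,d\rho_0\,dt,
\]
checks that $\nu\ll\mu$ where $d\mu=d\rho_t\,dt$, and takes $v$ to be the Radon--Nikodym derivative $d\nu/d\mu$. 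This immediately yields a jointly Borel $v$ and the identity $\dot X(t)=v(X(t),t)$ $\rho_0$-a.e.\ for a.e.\ $t$ in one stroke. You should either adopt this construction or supply a measurable selection argument to patch the gap.
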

\begin{proof}
\underline{1. Specifying $v$}:  Let $X$ denote the solution of the pressureless Euler flow equation \eqref{FlowMapEqn} which satisfies \eqref{xInit} as described in  Theorem \ref{mainThm} and define
$$
\nu(S):=\int^\infty_0\int_{\R}\left[v_0-\displaystyle\int^t_0(W'*\rho_s)(X(s))ds\right]\chi_{S}(X(t),t) d\rho_0dt
$$
for Borel $S\subset \R\times [0,\infty)$. This clearly defines a signed Borel measure on $\R\times [0,\infty)$, and it is not hard to check that $\nu$ is sigma finite. Let us also set 
$$
\mu(S):=\int^\infty_0\int_{\R}\chi_{S} d\rho_tdt=\int^\infty_0\int_{\R}\chi_{S}(X(t),t) d\rho_0dt
$$
for Borel $S\subset \R\times [0,\infty)$. It is clear that $\mu$ it is also sigma finite and that $\nu$ is absolutely continuous with respect to $\mu$.

\par By the Radon-Nikodym theorem, there is a Borel measurable $v:\R\times[0,\infty)\rightarrow \R$
such that 
$$
\nu(S)=\int_S vd\mu.
$$
Note in particular that 
\begin{align*}
\int^\infty_0\int_{\R}\phi d\nu&=\int^\infty_0\int_{\R}\phi(X(t),t) \left[v_0-\displaystyle\int^t_0(W'*\rho_s)(X(s))ds\right] d\rho_0dt\\
&=\int^\infty_0\int_{\R}\phi(X(t),t)\dot X(t) d\rho_0dt\\
&=\int^\infty_0\int_{\R}\phi vd\mu\\
&=\int^\infty_0\int_{\R}\phi(X(t),t) v(X(t),t)d\rho_0dt\\
\end{align*}
for each continuous $\phi:\R\times [0,\infty)\rightarrow \R$ with compact support. It follows that 
\be\label{FindingV}
\dot X(t)=v(X(t),t)=\E_{\rho_0}\left[v_0-\displaystyle\int^t_0(W'*\rho_s)(X(s))ds\bigg| X(t)\right]\quad \text{a.e.}\; t>0
\ee
$\rho_0$ almost everywhere.

\par \underline{2. Integrability}: Since $X: [0,\infty)\rightarrow L^2(\rho_0)$ is locally Lipschitz, 
\be
\int^T_0\int_{\R}v^2d\rho_tdt=\int^T_0\int_{\R}\dot X(t)^2d\rho_0dt<\infty
\ee
for each $T>0$. Lipschitz continuity also implies that $\int_{\R}|X(t)|d\rho_0$ is bounded on $[0,T]$, so 
\begin{align*}
\int^T_0\iint_{\R^2}|W'(x-y)|d\rho_t(x)d\rho_t(y)dt&=\int^T_0\iint_{\R^2}|W'(X(w,t)-X(z,t))|d\rho_0(w)d\rho_0(z)dt\\
&\le C\int^T_0\iint_{\R^2}(|X(w,t)-X(z,t)|+1)d\rho_0(w)d\rho_0(z)dt\\
&\le C\int^T_0\iint_{\R^2}(|X(w,t)|+|X(z,t)|+1)d\rho_0(w)d\rho_0(z)dt\\
&\le 2C\left(\int^T_0\int_{\R}|X(t)|d\rho_0dt+1\right)<\infty.
\end{align*}
Thus, $\rho$ and $v$ satisfy part $(i)$ of Definition \ref{WeakSolnDefn}.

\par \underline{3. Weak solution property}: Suppose $\phi\in C^\infty_c(\R\times [0,\infty))$ 
\begin{align*}
&\int^\infty_0\int_\R\left(\partial_t\phi+v\partial_x\phi\right)d\rho_tdt\\
&\quad=\int^\infty_0\int_\R\left(\partial_t\phi(X(t),t)+v(X(t),t)\partial_x\phi(X(t),t)\right)d\rho_0dt\\
&\quad=\int^\infty_0\int_\R\left(\partial_t\phi(X(t),t)+\dot X(t)\partial_x\phi(X(t),t)\right)d\rho_0dt\\
&\quad=\int^\infty_0\int_\R\frac{d}{dt}\phi(X(t),t)d\rho_0dt\\
&\quad=\int_\R\int^\infty_0\frac{d}{dt}\phi(X(t),t)dt d\rho_0\\
&\quad=-\int_\R\phi(X(0),0) d\rho_0\\
&\quad=-\int_\R\phi(\cdot,0)d\rho_0.
\end{align*}
This proves part $(ii)$ of Definition \ref{WeakSolnDefn}.   As for part $(iii)$ that definition, 

\begin{align*}
&\int^\infty_0\int_\R\left(\partial_t\phi+v\partial_x\phi\right)vd\rho_tdt\\
&\quad=\int^\infty_0\int_\R\left(\partial_t\phi(X(t),t)+v(X(t),t)\partial_x\phi(X(t),t)\right)v(X(t),t)d\rho_0dt\\
&\quad=\int^\infty_0\int_\R\left(\partial_t\phi(X(t),t)+v(X(t),t)\partial_x\phi(X(t),t)\right)\dot X(t)d\rho_0dt\\
&\quad=\int^\infty_0\int_\R\frac{d}{dt}\phi(X(t),t) \dot X(t)d\rho_0dt\\
&\quad=\int^\infty_0\int_\R\frac{d}{dt}\phi(X(t),t) \left[v_0-\displaystyle\int^t_0(W'*\rho_s)(X(s))ds\right]d\rho_0dt\\
&\quad=\int_\R\int^\infty_0\frac{d}{dt}\phi(X(t),t) \left[v_0-\displaystyle\int^t_0(W'*\rho_s)(X(s))ds\right]dt d\rho_0\\
&\quad=\int^\infty_0\int_\R\phi(X(t),t)(W'*\rho_t)(X(t)) d\rho_0 dt -\int_\R\phi(X(0),0)v_0 d\rho_0\\
&\quad=\int^\infty_0\int_\R\phi(W'*\rho_t) d\rho_t dt -\int_\R\phi(\cdot,0)v_0 d\rho_0.
\end{align*}

 \underline{4. Nonincreasing energy and entropy inequality}: Since 
 \begin{align*}
{\cal E}(\tau)&:=\int_{\R}\frac{1}{2}v(x,\tau)^2d\rho_\tau(x)+\iint_{\R^2}\frac{1}{2}W(x-y)d\rho_\tau(x)d\rho_\tau(y)\\
&=\int_{\R}\frac{1}{2}v(X(\tau),\tau)^2d\rho_0+\iint_{\R^2}\frac{1}{2}W(X(y,\tau)-X(z,\tau))d\rho_0(y)d\rho_0(z)\\
&=\int_{\R}\frac{1}{2}\dot X(\tau)^2d\rho_0+\iint_{\R^2}\frac{1}{2}W(X(y,\tau)-X(z,\tau))d\rho_0(y)d\rho_0(z)
\end{align*}
for almost every $\tau\ge 0$, ${\cal E}(t)\le {\cal E}(s)$ for almost every $t,s\in[0,\infty)$ with $s\le t$.  Here, of course, we are
employing conclusion $(i)$ of Theorem \ref{mainThm}. 

\par Recall that $[0,\infty)\ni t\mapsto X(y,t)$ is absolutely continuous on any compact interval within $[0,\infty)$ for $\rho_0$ almost every $y\in \R$. Let us denote this set of $y$ as $Q\subset \R$, and we emphasize that $Q$ is $\rho_0$ measurable and $\rho_0(Q)=1$. By conclusion $(iii)$ of Theorem \ref{mainThm},
\begin{align*}
0&\ge \frac{d}{dt}\frac{(X(y,t)-X(z,t))^2}{\sinh(\sqrt{c}t)^2}\\
&= \frac{2(X(y,t)-X(z,t))(\partial_tX(y,t)-\partial_tX(z,t))}{\sinh(\sqrt{c}t)^2}-2\frac{\sqrt{c}\cosh(\sqrt{c}t)}{\sinh(\sqrt{c}t)^3}(X(y,t)-X(z,t))^2\\
&= \frac{2}{\sinh(\sqrt{c}t)^2}\left[(X(y,t)-X(z,t))(v(X(y,t),t)-v(X(z,t),t))-\frac{\sqrt{c}}{\tanh(\sqrt{c}t)}(X(y,t)-X(z,t))^2 \right]
\end{align*} 
for Lebesgue almost every $t>0$ and $y,z\in Q$. 

\par As a result, we have proved part $(ii)$ of this assertion for $x,y$ belonging to the image of $Q$ under $X(t)$
\be
X(t)(Q)=\{X(y,t)\in \R: y\in Q\}
\ee
for almost every $t>0$. Without loss of generality, we may suppose $Q$ is a countable union of closed sets (Theorem 1.19 of \cite{Folland}).  By part $(ii)$ of Theorem \ref{mainThm}, we may as well assume $X(t)$ is continuous on $\R$. It follows that $X(t)(Q)$ is Borel measurable (Proposition A.1 in \cite{MR4007615}). As  $Q\subset X(t)^{-1}[X(t)(Q)]$,
\be
\rho_t(X(t)(Q))=\rho_0(X(t)^{-1}[X(t)(Q)])\ge \rho_0(Q)=1.
\ee
We conclude that part $(ii)$ of this assertion holds for Lebesgue almost every $t>0$ and $x,y$ belonging to a Borel set of full measure for $\rho_t$.
\end{proof}

\section{Euler-Poisson equations in 1D}\label{EPsect}
For the remainder of this paper, we will assume
\be
W(x)=|x|, \quad x\in \R.
\ee
As $W$ is not continuously differentiable,
we will also consider the closely related interaction potential
\be
W_\epsilon(x)=(x^2+\epsilon^2)^{1/2}, \quad x\in \R
\ee
for $\epsilon>0$ and small. Clearly, 
\be\label{WepsUniform}
W(x)\le W_\epsilon(x)\le W(x)+\epsilon,\quad x\in \R.
\ee
Also observe that $W_\epsilon$ is convex, even, and continuously differentiable with 
\be
|W_\epsilon'(x)|\le 1.
\ee
\par Therefore, there is a locally Lipschitz continuous $X^\epsilon:[0,\infty)\rightarrow L^2(\rho_0)$ which satisfies
\be\label{XepsilonEquation}
\begin{cases}
\dot X^\epsilon(t)=\E_{\rho_0}\left[v_0-\displaystyle\int^t_0(W_\epsilon'*\rho^\epsilon_s)(X^\epsilon(s))ds\bigg| X^\epsilon(t)\right], \quad a.e.\;t\ge 0\\
X^\epsilon(0)=\id
\end{cases}
\ee
for each $\epsilon>0$.  Here 
$$
\rho^\epsilon_t:=X^\epsilon(t)_{\#}\rho_0, \quad t\ge 0.
$$
We will argue below that there is a sequence of positive numbers $\epsilon_k\rightarrow 0$ and $(X^{\epsilon_k})_{k\in\N}$ which converges in a strong sense to 
a solution $X$ of the Euler-Poisson flow equation \eqref{FlowMapEqnEP} which satisfies the initial condition \eqref{xInit}.   We will then make a final remark on the existence of weak solution pairs to the Euler-Poisson system \eqref{EP1D}.

\subsection{A strongly convergent subsequence}
Let us begin by recalling a few facts we have already established for $X^\epsilon$.
\\
\par  {\bf Lipschitz continuity in time}. In view of Theorem \ref{mainThm}, $X^\epsilon$ satisfies
\begin{align}
\int_{\R}\frac{1}{2}(\dot X^\epsilon(t))^2d\rho_0&\le \int_{\R}\frac{1}{2}(\dot X^\epsilon(t))^2d\rho_0+\iint_{\R^2}\frac{1}{2}W_\epsilon(X^\epsilon(y,t)-X^\epsilon(z,t))d\rho_0(y)d\rho_0(z)\\
&\le \int_{\R}\frac{1}{2}v_0^2d\rho_0+\iint_{\R^2}\frac{1}{2}W_\epsilon(y-z)d\rho_0(y)d\rho_0(z)\\
&\le \int_{\R}\frac{1}{2}v_0^2d\rho_0+\iint_{\R^2}\frac{1}{2}(|y-z|+\epsilon)d\rho_0(y)d\rho_0(z)\\
&= \int_{\R}\frac{1}{2}v_0^2d\rho_0+\iint_{\R^2}\frac{1}{2}|y-z|d\rho_0(y)d\rho_0(z)+\frac{1}{2}\epsilon
\end{align}
for almost every $t\ge 0$. Therefore, $X^\epsilon:[0,\infty)\rightarrow L^2(\rho_0)$ is uniformly Lipschitz continuous. 
\\
\par {\bf Uniform spatial continuity}. Theorem \ref{mainThm} also gives
\be
0\le X^\epsilon(y,t)-X^\epsilon(z,t)\le y-z +t\int^y_z|v_0'(x)|dx
\ee
for each $y,z\in\text{supp}(\rho_0)$ with $y\ge z$ and for each $t\ge 0$.  As a result, we may was well suppose $X^\epsilon(t):\R\rightarrow \R$ is uniformly continuous and satisfies 
\be
|X^\epsilon(y,t)-X^\epsilon(z,t)|\le |y-z| +t\omega(|y-z|)
\ee
for every $y,z\in \R$ and $t\ge 0$. Here $\omega$ is the modulus of continuity defined in \eqref{modulusOmega}.
\\
\par {\bf Quantitative stickiness}.  For $0<s\le t$, there is a function $f^\epsilon_{t,s}:\R\rightarrow \R$ which satisfies 
\be\label{SimpleLipCond}
|f^\epsilon_{t,s}(x)-f^\epsilon_{t,s}(y)|\le \frac{t}{s}|x-y|,\quad x,y\in \R
\ee
such that 
\be\label{SemiGroupPropXeps}
X^\epsilon(y,t)=f^\epsilon_{t,s}(X^\epsilon(y,s))
\ee
for $y\in\text{supp}(\rho_0)$. In particular, 
\be
\frac{1}{t}|X^\epsilon(y,t)-X^\epsilon(z,t)|\le \frac{1}{s}|X^\epsilon(y,s)-X^\epsilon(z,s)|
\ee
for $y,z\in\text{supp}(\rho_0)$ and $0<s\le t$.  
\\
\par We can use this Lipschitz continuity in time, uniform spatial continuity and quantitative stickiness of $(X^\epsilon)_{\epsilon>0}$, along with the arguments we used to prove Proposition \ref{StrongCompactXkayjay} and Corollary \ref{dotXfunctionofX}, to establish the following assertion. 
 
\begin{prop}\label{XepsKayCompactness}
There is a sequence of positive numbers $\epsilon_k\rightarrow 0$ and a Lipschitz $X: [0,\infty)\rightarrow L^2(\rho_0)$ such that for each $t\ge 0$,
\be
X^{\epsilon_k}(t)\rightarrow X(t)
\ee
locally uniformly on $\R$ and
\be
\lim_{j\rightarrow\infty}\int_{\R}h(\textup{id}_\R,X^{\epsilon_k}(t))d\rho_0=\int_{\R}h(\textup{id}_\R,X(t))d\rho_0
\ee
for continuous $h:\R^2\rightarrow \R$ with
\be
\sup_{(x,y)\in \R^2}\frac{|h(x,y)|}{1+x^2+y^2}<\infty.
\ee
In addition, $X$ has the following properties.  

\begin{enumerate}[(i)]

\item For $t\ge 0$ and $y,z\in\textup{supp}(\rho_0)$ with $y\le z$, 
$$
0\le X(z,t)-X(y,t)\le z-y+t\int^z_y|v_0'(x)|dx
$$

\item For each $0<s\le t$ and $y,z\in\textup{supp}(\rho_0)$
\be
\frac{1}{t}|X(y,t)-X(z,t)|\le \frac{1}{s}|X(y,s)-X(z,s)|
\ee

\item For each $0<s\le t$, $(f^{\epsilon_k}_{t,s})_{k\in \N}$ has a subsequence which converges locally uniformly to a function $f_{t,s}:\R\rightarrow \R$ which satisfies the Lipschitz condition \eqref{SimpleLipCond} 
and
$$
X(y,t)=f_{t,s}(X(y,s))
$$
for $y\in\textup{supp}(\rho_0)$.

\item For almost every  $t>0$, there is a Borel function $u:\R\rightarrow \R$ for which
$$
\dot X(t)=u(X(t))
$$
for $\rho_0$ almost everywhere. 
\end{enumerate}
\end{prop}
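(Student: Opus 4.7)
The plan is to follow the template established in the proofs of Proposition \ref{StrongCompactXkayjay} and Corollary \ref{dotXfunctionofX}, with simplifications reflecting the fact that $\rho_0$ is fixed (so no approximation by discrete measures is needed) and that the bounds listed above are uniform in $\epsilon$ rather than just uniform in $k$. Specifically, three ingredients are provided for free: uniform-in-$\epsilon$ Lipschitz continuity of $t\mapsto X^\epsilon(t)$ in $L^2(\rho_0)$, uniform-in-$\epsilon$ modulus of continuity of $y\mapsto X^\epsilon(y,t)$ on $\R$, and quantitative stickiness via $f^\epsilon_{t,s}$.

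The first step is compactness. From $X^\epsilon(y,0)=y$ and the uniform Lipschitz bound in time, together with $\int y^2 d\rho_0<\infty$, one gets a pointwise bound $|X^\epsilon(y,t)|\le a(|y|+1)(1+t)$ (or similar) valid for all $\epsilon>0$. Combined with the uniform spatial modulus $|X^\epsilon(y,t)-X^\epsilon(z,t)|\le |y-z|+t\,\omega(|y-z|)$, Arzel\`a--Ascoli gives, for each fixed $t$, a locally uniformly convergent subsequence. Choosing a countable dense set of times in $[0,\infty)$ and diagonalizing produces a sequence $\epsilon_k\to 0$ and a function $X(y,t)$ on $\R\times\Q_+$ so that $X^{\epsilon_k}(t)\to X(t)$ locally uniformly on $\R$ along this dense set. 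The uniform Lipschitz bound in time (in $L^2(\rho_0)$) then extends the convergence to every $t\ge 0$ and shows $t\mapsto X(t)\in L^2(\rho_0)$ is Lipschitz. The integral convergence for continuous $h$ with at most quadratic growth follows from the locally uniform convergence plus uniform integrability of $(y,x)\mapsto 1+x^2+y^2$ with respect to $(\textup{id}_\R,X^{\epsilon_k}(t))_\#\rho_0$, which holds because $\int (X^{\epsilon_k}(t))^2 d\rho_0$ is uniformly bounded by the pointwise estimate above and dominated convergence.

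Properties $(i)$ and $(ii)$ are then obtained by taking $k\to\infty$ in the corresponding inequalities for $X^{\epsilon_k}$ stated in the bullets preceding the proposition, using the pointwise convergence on $\textup{supp}(\rho_0)$. For $(iii)$, the Lipschitz condition \eqref{SimpleLipCond} makes the family $(f^{\epsilon_k}_{t,s})_{k\in\N}$ uniformly equicontinuous; the bound
\[
|f^{\epsilon_k}_{t,s}(x)|\le \tfrac{t}{s}|x-X^{\epsilon_k}(y,s)|+|X^{\epsilon_k}(y,t)|
\]
for any fixed $y\in\textup{supp}(\rho_0)$ shows local uniform boundedness once we use the convergence of $X^{\epsilon_k}(y,s)$ and $X^{\epsilon_k}(y,t)$. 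Arzel\`a--Ascoli then provides a locally uniformly convergent subsequence, and \eqref{SemiGroupPropXeps} passes to the limit.

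Finally, for $(iv)$, I would copy the argument of Corollary \ref{dotXfunctionofX} verbatim: since $t\mapsto X(t)$ is Lipschitz into $L^2(\rho_0)$, at a.e.\ $t>0$ the difference quotients $n(X(t+1/n)-X(t))$ converge to $\dot X(t)$ in $L^2(\rho_0)$ and, passing to a subsequence, $\rho_0$-a.e.; on a full-measure set $S$ we may invoke $(iii)$ to write $n(X(t+1/n)-X(t))=u_n(X(t))$ with $u_n:=n(f_{t+1/n,t}-\textup{id})$ Borel, so $\dot X(t)|_S$ is measurable with respect to the $\sigma$-algebra generated by $X(t)|_S$ and hence equals $u(X(t))$ for some Borel $u$. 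The main technical obstacle I anticipate is not any single step but rather the bookkeeping of the successive subsequence extractions: one needs a single sequence $\epsilon_k\to 0$ along which the locally uniform convergence of $X^{\epsilon_k}(t)$ holds for \emph{every} $t\ge 0$ (handled by the dense-set-plus-Lipschitz extension above), while $(iii)$ permits a further $t,s$-dependent subsequence of $f^{\epsilon_k}_{t,s}$; one must check that this is consistent, e.g.\ by noting that once $X^{\epsilon_k}(y,t)$ converges for all $y$ on $\textup{supp}(\rho_0)$, the limit $f_{t,s}$ is uniquely determined on $X(s)(\textup{supp}(\rho_0))$ by \eqref{SemiGroupPropXeps}, so any convergent subsequence of $f^{\epsilon_k}_{t,s}$ has the same limit there.
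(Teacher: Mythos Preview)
Your proposal is correct and follows the paper's own approach, which is simply to invoke the arguments of Proposition \ref{StrongCompactXkayjay} and Corollary \ref{dotXfunctionofX} with the simplifications you identify (fixed $\rho_0$, bounds uniform in $\epsilon$). Your direct Arzel\`a--Ascoli plus diagonalization over a dense set of times is a legitimate streamlining of the measure-valued Arzel\`a--Ascoli route used in Proposition \ref{StrongCompactXkayjay}; the only point to watch is that the $L^2$-Lipschitz bound in time yields uniqueness of subsequential limits on $\textup{supp}(\rho_0)$ (where all the conclusions live), which suffices.
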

We also have the following immediate corollary which can be proved the same way that we justified \eqref{WeakConvDotXkayjay}. 
\begin{cor}\label{WeakconvDerXepsKay}
Suppose $g:\R\rightarrow \R$ is continuous and
\be
\sup_{x\in \R}\frac{|g(x)|}{1+|x|}<\infty.
\ee
Then
\be
\lim_{k\rightarrow\infty}\int^t_s\int_{\R}\dot X^{\epsilon_k}(\tau) g(X^{\epsilon_k}(\tau))d\rho_0d\tau=\int^t_s\int_{\R}\dot X(\tau) g(X(\tau))d\rho_0d\tau
\ee
for $0\le s\le t$. 
\end{cor}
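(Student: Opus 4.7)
The plan is to mimic the proof of \eqref{WeakConvDotXkayjay}. Set $G(y) := \int_0^y g(x)\,dx$, which is $C^1$ with $G' = g$; since $g$ has at most linear growth, $G$ has at most quadratic growth, say $|G(y)| \le C(1+y^2)$ for some constant $C$.

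The first step is to establish the identity
\begin{equation*}
\int_s^t \int_\R \dot X^\epsilon(\tau)\, g(X^\epsilon(\tau))\, d\rho_0\, d\tau = \int_\R \bigl[G(X^\epsilon(t)) - G(X^\epsilon(s))\bigr]\, d\rho_0
\end{equation*}
for each $\epsilon > 0$, together with its analogue for the limit curve $X$. Both $X^\epsilon$ and $X$ are Lipschitz from $[0,\infty)$ into $L^2(\rho_0)$, and combined with the uniform second-moment bounds $\sup_{\tau\in[0,T]}\int_\R X^\epsilon(\tau)^2\,d\rho_0<\infty$ (available from the discussion preceding Proposition \ref{XepsKayCompactness}) and the quadratic growth of $G$, this makes the real-valued map $t\mapsto \int_\R G(X^\epsilon(t))\,d\rho_0$ Lipschitz. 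The standard chain rule for Lipschitz curves in $L^2$ composed with a $C^1$ functional yields
\begin{equation*}
\frac{d}{dt}\int_\R G(X^\epsilon(t))\,d\rho_0 \;=\; \int_\R g(X^\epsilon(t))\,\dot X^\epsilon(t)\,d\rho_0 \qquad \text{a.e. } t,
\end{equation*}
and integrating over $[s,t]$ with Fubini gives the displayed identity. The same reasoning applies to $X$.

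The second step is to pass to the limit along $\epsilon_k\to 0$. Proposition \ref{XepsKayCompactness}, applied to the continuous test function $h(x,y):=G(y)$, which satisfies $|h(x,y)|\le C(1+x^2+y^2)$, yields $\lim_{k\to\infty}\int_\R G(X^{\epsilon_k}(\tau))\,d\rho_0 = \int_\R G(X(\tau))\,d\rho_0$ for each $\tau\ge 0$, in particular at $\tau=s$ and $\tau=t$. Chaining the identity of Step 1 at $\epsilon=\epsilon_k$, this convergence at the two endpoints, and the identity of Step 1 for $X$ produces the claim. The main obstacle is the chain rule used in Step 1: because $g$ has linear rather than bounded growth, $G$ is genuinely quadratic, and one must verify that $g(X^\epsilon(\tau))\dot X^\epsilon(\tau)$ is jointly integrable against $d\rho_0\,d\tau$ on compact time intervals and that differentiation under the integral sign is valid. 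Both are routine consequences of the uniform-in-$\epsilon$ Lipschitz and second-moment bounds for $X^\epsilon:[0,T]\to L^2(\rho_0)$ already recorded in the discussion preceding Proposition \ref{XepsKayCompactness}, so once that chain-rule identity is in hand, the convergence step reduces to a direct invocation of the compactness statement.
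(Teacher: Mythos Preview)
Your proof is correct and follows essentially the same approach the paper indicates: define the antiderivative $G$ of $g$, rewrite the time-space integral as $\int_\R[G(X^{\epsilon_k}(t))-G(X^{\epsilon_k}(s))]d\rho_0$ via the chain rule, and pass to the limit using the strong convergence of Proposition~\ref{XepsKayCompactness} applied to the at-most-quadratic test function $h(x,y)=G(y)$. The only cosmetic difference is that the paper's model argument for \eqref{WeakConvDotXkayjay} invokes Fubini and the pointwise identity $\frac{d}{d\tau}G(X(y,\tau))=g(X(y,\tau))\partial_tX(y,\tau)$ along individual trajectories, whereas you phrase the same step as the chain rule for the $C^1$ functional $u\mapsto\int_\R G(u)\,d\rho_0$ along the Lipschitz curve $t\mapsto X^{\epsilon_k}(t)$ in $L^2(\rho_0)$; both routes yield the same identity.
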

This is as far as we can go with the convergence arguments we used to establish Theorem \ref{mainThm}.  We will need to identify another mechanism
which will allow us to pass to the limit in the term with $W_\epsilon'$ in equation \eqref{XepsilonEquation}. This will be the topic of the following subsection.

\subsection{A convergence lemma}
Let us recall the definition of $\sgn$
\be
\sgn(x)=
\begin{cases}
1,\quad &x>0\\
0,\quad &x=0\\
-1,\quad &x<0.
\end{cases}
\ee
We will also fix a sequence $(\mu^k)_{k\in \N}\subset{\cal P}(\R)$ which converges narrowly to $\mu\in {\cal P}(\R)$ and additionally satisfies
\be\label{myookayConvStrong}
\lim_{k\rightarrow\infty}\int_{\R}x^2d\mu^k(x)=\int_{\R}x^2d\mu(x).
\ee
The central assertion of this subsection is as follows.
\begin{lem}\label{SubdifferentialConProp}
Suppose $g:\R\rightarrow \R$ is continuous and
\be\label{geeGrowAtMostLinearly}
\sup_{x\in \R}\frac{|g(x)|}{1+|x|}<\infty.
\ee
Then
\be
\lim_{k\rightarrow\infty}\iint_{\R^2}W'_{\epsilon_k}(x-y)g(x)d\mu^k(x)d\mu^k(y)
=\iint_{\R^2}\textup{\sgn}(x-y)g(x)d\mu(x)d\mu(y).
\ee
\end{lem}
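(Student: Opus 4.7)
My plan is to exploit the oddness of both $W'_{\epsilon_k}$ and $\sgn$ to rewrite each integral in a symmetric form, after which the integrand becomes continuous even in the limit, and then to invoke Lemma \ref{LemmaVariantNarrowCon}.

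Since $W'_{\epsilon_k}$ and $\sgn$ are odd, interchanging $x\leftrightarrow y$ and averaging yields
\[
\iint_{\R^2} W'_{\epsilon_k}(x-y)g(x)d\mu^k(x)d\mu^k(y)=\tfrac{1}{2}\iint_{\R^2} W'_{\epsilon_k}(x-y)(g(x)-g(y))d\mu^k(x)d\mu^k(y),
\]
and analogously for the integral against $\sgn$ and $\mu\otimes\mu$. Setting
\[
H_k(x,y):=\tfrac{1}{2}W'_{\epsilon_k}(x-y)(g(x)-g(y)),\qquad H(x,y):=\tfrac{1}{2}\sgn(x-y)(g(x)-g(y)),
\]
the decisive observation is that although $\sgn$ has a jump at $0$, the factor $g(x)-g(y)$ vanishes on the diagonal, so $H$ is \emph{continuous} on all of $\R^2$. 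Moreover I claim $H_k\to H$ locally uniformly: the elementary bound $|W'_{\epsilon_k}(z)-\sgn(z)|\le \epsilon_k^2/z^2$ for $z\neq 0$ gives uniform convergence on $\{|x-y|\ge\delta\}$ for every fixed $\delta>0$, while uniform continuity of $g$ on compacts together with $|W'_{\epsilon_k}|,|\sgn|\le 1$ makes $|H_k-H|\le|g(x)-g(y)|$ small on $\{|x-y|<\delta\}$ once $\delta$ is chosen small.

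For the domination hypothesis of Lemma \ref{LemmaVariantNarrowCon}, the linear growth \eqref{geeGrowAtMostLinearly} yields $|H_k(x,y)|\le h(x,y):=C(1+|x|+|y|)$, whose sublevel sets are compact. The uniform integrability of $h$ with respect to $(\mu^k\otimes\mu^k)$ follows from the bound $\sup_k\int x^2d\mu^k<\infty$ implied by \eqref{myookayConvStrong} via Markov's inequality: $\iint_{|x|+|y|\ge R}(|x|+|y|)d\mu^k\otimes d\mu^k\le R^{-1}\iint(|x|+|y|)^2 d\mu^k\otimes d\mu^k\le 4R^{-1}\int x^2 d\mu^k$. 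Since $\mu^k\to\mu$ narrowly implies $\mu^k\otimes\mu^k\to\mu\otimes\mu$ narrowly, Lemma \ref{LemmaVariantNarrowCon} applied to $H_k$ and $\mu^k\otimes\mu^k$ produces $\iint H_k d(\mu^k\otimes\mu^k)\to\iint H d(\mu\otimes\mu)$, which is the claimed identity. The only conceptual obstacle is spotting the symmetrization that converts the discontinuous integrand $\sgn(x-y)g(x)$ into the continuous $\tfrac{1}{2}\sgn(x-y)(g(x)-g(y))$; once that is in hand, the remainder is a routine application of the convergence lemma already recorded in the preliminaries.
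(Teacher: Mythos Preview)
Your argument is correct, and it takes a genuinely different route from the paper's. The paper does \emph{not} symmetrize; instead it uses a variational characterization: it first proves Lemma~\ref{SubdiffLemma}, which says that $\xi=\sgn*\mu$ is the unique $L^2(\mu)$ function satisfying the subdifferential-type inequality
\[
\iint_{\R^2}\tfrac12|x-y+(g(x)-g(y))|\,d\mu\,d\mu \;\ge\; \iint_{\R^2}\tfrac12|x-y|\,d\mu\,d\mu+\int_{\R}\xi g\,d\mu
\]
for all admissible $g$. Then, using $|W'_{\epsilon_k}|\le 1$ and the $L^2$-compactness afforded by \eqref{myookayConvStrong}, it extracts a weak limit $\xi$ of $W'_{\epsilon_k}*\mu^k$ along a subsequence, passes to the limit in the analogous inequality for $W_{\epsilon_k}$, and invokes Lemma~\ref{SubdiffLemma} to identify $\xi=\sgn*\mu$; uniqueness of the limit upgrades subsequential to full convergence.

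Your symmetrization is more elementary: once you notice that $\tfrac12\sgn(x-y)(g(x)-g(y))$ is continuous on all of $\R^2$, the result reduces to a direct application of Lemma~\ref{LemmaVariantNarrowCon}, and the auxiliary Lemma~\ref{SubdiffLemma} becomes unnecessary. Your approach would equally streamline Corollary~\ref{SubdifferentialConProp}, since $\tfrac12 W'_{\epsilon_k}(x-y)(g^k(x)-g^k(y))$ still converges locally uniformly under the hypotheses there. The paper's convex-analytic route, on the other hand, is more robust in principle---it would adapt to other convex, nonsmooth interaction potentials $W$ where an explicit pointwise limit of $W'_\epsilon$ may be less transparent---but for the absolute value specifically your argument is shorter and avoids the weak-compactness step.
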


\par We will first verify an elementary observation, which is ultimately due to the convexity of the absolute value function. In particular, we will employ
\be\label{sgnSubdiff}
|y|\ge |x|+\sgn(x)(y-x)
\ee
for each $x,y\in \R$.

\begin{lem}\label{SubdiffLemma}
The following are equivalent for $\xi\in L^2(\mu)$. \\
(i) For $\mu$ almost every $x\in \R$,
$$
\xi(x)=\textup{\sgn}*\mu(x).
$$
(ii) For each continuous $g:\R\rightarrow \R$ which satisfies \eqref{geeGrowAtMostLinearly},
$$
\iint_{\R^2}\frac{1}{2}|x-y+(g(x)-g(y))|d\mu(x)d\mu(y)\ge \iint_{\R^2}\frac{1}{2}|x-y|d\mu(x)d\mu(y)+\int_{\R}\xi gd\mu.
$$
\end{lem}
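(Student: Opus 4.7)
The plan is to prove the two directions separately, with both hinging on the subdifferential inequality \eqref{sgnSubdiff}.

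For $(i) \Rightarrow (ii)$, I would apply \eqref{sgnSubdiff} pointwise with $x$ replaced by $x-y$ and $y$ replaced by $x-y+(g(x)-g(y))$, obtaining
\begin{equation}
|x-y+(g(x)-g(y))| \ge |x-y| + \text{sgn}(x-y)(g(x)-g(y))
\end{equation}
for every $x,y\in\R$. I would then integrate both sides against $\tfrac{1}{2}d\mu(x)d\mu(y)$. The linear growth of $g$ together with the finite second moment of $\mu$ guarantees integrability of each term. The remaining task is to recognize the cross term as $\int \xi g\,d\mu$. By the antisymmetry $\text{sgn}(x-y)=-\text{sgn}(y-x)$ and a swap of dummy variables,
\begin{equation}
\iint_{\R^2}\text{sgn}(x-y)(g(x)-g(y))d\mu(x)d\mu(y)=2\iint_{\R^2}\text{sgn}(x-y)g(x)d\mu(x)d\mu(y)=2\int_{\R}(\text{sgn}*\mu)\,g\,d\mu,
\end{equation}
which by hypothesis $(i)$ equals $2\int_\R \xi g\,d\mu$. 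Halving gives $(ii)$.

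For $(ii) \Rightarrow (i)$, the strategy is to linearize $(ii)$ around $\epsilon=0$. Given any continuous $g$ with at most linear growth, I apply $(ii)$ with $g$ replaced by $\epsilon g$, rearrange, and divide by $\epsilon>0$:
\begin{equation}
\iint_{\R^2}\frac{1}{2}\cdot\frac{|x-y+\epsilon(g(x)-g(y))|-|x-y|}{\epsilon}d\mu(x)d\mu(y)\ge \int_{\R}\xi g\,d\mu.
\end{equation}
For each fixed pair $(x,y)$ the integrand converges to $\tfrac{1}{2}\text{sgn}(x-y)(g(x)-g(y))$ as $\epsilon\to 0^+$ (including the diagonal $x=y$, where both sides are $0$). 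The triangle inequality gives the domination
\begin{equation}
\left|\frac{|x-y+\epsilon(g(x)-g(y))|-|x-y|}{\epsilon}\right|\le |g(x)-g(y)|\le C(2+|x|+|y|),
\end{equation}
which lies in $L^1(\mu\otimes\mu)$ since $\mu$ has finite second moment. Dominated convergence then yields
\begin{equation}
\int_{\R}(\text{sgn}*\mu)\,g\,d\mu \ge \int_{\R}\xi g\,d\mu,
\end{equation}
after using the same antisymmetry identity as above. Replacing $g$ by $-g$ gives the opposite inequality, so equality holds for every continuous $g$ of at most linear growth. In particular it holds for all bounded continuous test functions, and since $\text{sgn}*\mu$ is bounded and $\xi\in L^2(\mu)$, a standard density argument forces $\xi=\text{sgn}*\mu$ $\mu$-almost everywhere.

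The main obstacle I anticipate is purely bookkeeping: verifying integrability uniformly in $\epsilon$ and justifying the passage to the limit in the difference quotient. Both are handled by the finite second-moment assumption on $\mu$ together with the elementary bound $||a+b|-|a||\le |b|$; neither step requires more than routine dominated convergence, so no substantive obstacle should arise.
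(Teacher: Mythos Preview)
Your proposal is correct and follows essentially the same route as the paper's proof: both directions rely on the subgradient inequality \eqref{sgnSubdiff}, the antisymmetry of $\sgn$ to rewrite the cross term as $2\int(\sgn*\mu)g\,d\mu$, and for $(ii)\Rightarrow(i)$ the linearization $g\mapsto \epsilon g$ followed by dominated convergence with the bound $|g(x)-g(y)|$. The only cosmetic difference is that you spell out the integrability bookkeeping and the final density step a bit more explicitly than the paper does.
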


\begin{proof}
Suppose $g:\R\rightarrow \R$ is a continuous function which grows at most linearly as $|x|\rightarrow\infty$. 

\par $(i)\Longrightarrow (ii)$ Employing \eqref{sgnSubdiff} and noting that $\sgn$ is odd gives
\begin{align*}
&\iint_{\R^2}\frac{1}{2}|x-y+(g(x)-g(y))|d\mu(x)d\mu(y)\\
&\quad\quad \ge \iint_{\R^2}\frac{1}{2}|x-y|d\mu(x)d\mu(y)+\frac{1}{2}\iint_{\R^2}\sgn(x-y)(g(x)-g(y))d\mu(x)d\mu(y)\\
&\quad\quad=\iint_{\R^2}\frac{1}{2}|x-y|d\mu(x)d\mu(y)+\frac{1}{2}\iint_{\R^2}\sgn(x-y)g(x)d\mu(x)d\mu(y)\\
&\hspace{1in} -\frac{1}{2}\iint_{\R^2}\sgn(x-y)g(y)d\mu(x)d\mu(y)\\
&\quad\quad= \iint_{\R^2}\frac{1}{2}|x-y|d\mu(x)d\mu(y)+\frac{1}{2}\iint_{\R^2}\sgn(x-y)g(x)d\mu(x)d\mu(y)\\
&\hspace{1in} +\frac{1}{2}\iint_{\R^2}\sgn(y-x)g(y)d\mu(x)d\mu(y)\\
&\quad\quad= \iint_{\R^2}\frac{1}{2}|x-y|d\mu(x)d\mu(y)+\int_{\R}\left(\int_{\R}\sgn(x-y)d\mu(y)\right)g(y)d\mu(x)\\
&\quad\quad= \iint_{\R^2}\frac{1}{2}|x-y|d\mu(x)d\mu(y)+\int_{\R}(\sgn*\mu) gd\mu.
\end{align*}

\par $(ii)\Longrightarrow (i)$ By assumption,
\be\label{PhiConvexIneq}
\frac{1}{2}\int_{\R}\frac{|x-y+t(g(x)-g(y))|-|x-y|}{t}d\mu(x)d\mu(y)\ge \int_{\R}g \xi d\mu.
\ee
Also notice 
$$
\left|\frac{|x-y+t(g(x)-g(y))|-|x-y|}{t}\right|\le |g(x)-g(y)|
$$
and 
$$
\lim_{t\rightarrow 0^+}\frac{|x-y+t(g(x)-g(y))|-|x-y|}{t}=\sgn(x-y)(g(x)-g(y)).
$$
for $x,y\in \R$. By dominated convergence, we can send $t\rightarrow 0^+$ in \eqref{PhiConvexIneq} to find 
$$
\int_{\R}g \xi d\mu\le \frac{1}{2}\int_{\R}\int_{\R}\sgn(x-y)(g(x)-g(y))d\mu(x)d\mu(y)=\int_{\R}(\sgn*\mu) gd\mu.
$$
Replacing $g$ with $-g$ gives
$$
\int_{\R}g (\xi -\sgn*\mu)d\mu=0.
$$
As $g$ is arbitrary, $\xi - \sgn*\mu$ vanishes $\mu$ almost everywhere. 
\end{proof}

\begin{proof}[Proof of Lemma \ref{SubdifferentialConProp}]
Using the same method to prove $(i)\Longrightarrow (ii)$ in Lemma \ref{SubdiffLemma}, we find 
\begin{align*}
&\iint_{\R^2}\frac{1}{2}W_{\epsilon_k}(x-y+(g(x)-g(y)))d\mu^k(x)d\mu^k(y)\\
&\hspace{1in}\ge \iint_{\R^2}\frac{1}{2}W_{\epsilon_k}(x-y)d\mu^k(x)d\mu^k(y)+\int_{\R}(W_{\epsilon_k}'*\mu^k) gd\mu^k
\end{align*}
for each continuous and at most linearly growing $g:\R\rightarrow \R$ . And by \eqref{WepsUniform},  
\begin{align}\label{approximateSubdiffLim}
&\frac{1}{2}\epsilon_k+\iint_{\R^2}\frac{1}{2}|x-y+(g(x)-g(y))|d\mu^k(x)d\mu^k(y)\\
&\hspace{1in}\ge \iint_{\R^2}\frac{1}{2}|x-y|d\mu^k(x)d\mu^k(y)+\int_{\R}(W_{\epsilon_k}'*\mu^k) gd\mu^k.
\end{align}

\par Since $|W'_{\epsilon_k}|\le 1$,
$$
|(W_{\epsilon_k}'*\mu^k)(z)|\le 1, \quad z\in \R. 
$$ 
Combining this fact with \eqref{myookayConvStrong} provides a subsequence $(W_{\epsilon_{k_j}}'*\mu^{k_j})_{j\in \N}$  and $\xi\in L^2(\mu)$ such that 
$$
\lim_{j\rightarrow\infty}\int_{\R}(W_{\epsilon_{k_j}}'*\mu^{k_j}) gd\mu^{k_j}=\int_{\R}\xi gd\mu
$$
for each continuous and at most linearly growing $g:\R\rightarrow \R$ (Theorem 5.4.4 of \cite{AGS}).  Sending $k=k_j\rightarrow\infty$ in \eqref{approximateSubdiffLim} gives
$$
\iint_{\R^2}\frac{1}{2}|x-y+(g(x)-g(y))|d\mu(x)d\mu(y)\ge \iint_{\R^2}\frac{1}{2}|x-y|d\mu(x)d\mu(y)+\int_{\R}\xi gd\mu.
$$
Lemma \ref{SubdiffLemma} implies 
$$
\xi=\sgn*\mu. 
$$
Since this limit is independent of the subsequence, 
\begin{align*}
\lim_{k\rightarrow\infty}\iint_{\R^2}W'_{\epsilon_k}(x-y)g(x)d\mu^k(x)d\mu^k(y)&=\lim_{k\rightarrow\infty}\int_{\R}(W_{\epsilon_k}'*\mu^k) gd\mu^{k}\\
&=\int_{\R}(\sgn*\mu) gd\mu\\
&=\iint_{\R^2}\sgn(x-y)g(x)d\mu(x)d\mu(y)
\end{align*}
for each continuous $g:\R\rightarrow \R$ which satisfies \eqref{SubdifferentialConProp}. 
\end{proof}

We will actually need a minor refinement of Lemma \ref{SubdiffLemma} in our proof of Theorem \ref{secondThm}. 
\begin{cor}\label{SubdifferentialConProp}
Suppose $(g^k)_{k\in \N}$ is a sequence of continuous functions on $\R$ which satisfies 
\be\label{uniformLinearGrowthGeeKay}
\sup_{x\in \R}\frac{|g^k(x)|}{1+|x|}\le C
\ee
for some $C$ and which converges locally uniformly to $g: \R\rightarrow \R$. Then 
\be
\lim_{k\rightarrow\infty}\iint_{\R^2}W'_{\epsilon_k}(x-y)g^k(x)d\mu^k(x)d\mu^k(y)=\iint_{\R^2}\textup{\sgn}(x-y)g(x)d\mu(x)d\mu(y).
\ee
\end{cor}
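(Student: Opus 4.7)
The plan is to reduce this corollary to the preceding lemma (the case of a single fixed $g$) by a simple splitting argument, absorbing the $k$-dependence of $g^k$ into an error term that can be controlled by local uniform convergence plus a tail estimate from the second-moment bound.

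Concretely, I would write
\begin{align*}
\iint_{\R^2}W'_{\epsilon_k}(x-y)g^k(x)\,d\mu^k(x)d\mu^k(y)
 &= \iint_{\R^2}W'_{\epsilon_k}(x-y)g(x)\,d\mu^k(x)d\mu^k(y) \\
 &\quad + \iint_{\R^2}W'_{\epsilon_k}(x-y)\bigl(g^k(x)-g(x)\bigr)d\mu^k(x)d\mu^k(y).
\end{align*}
The function $g$ inherits the linear growth bound $|g(x)|\le C(1+|x|)$ from \eqref{uniformLinearGrowthGeeKay} via pointwise convergence, so the preceding lemma applies to the first summand and gives convergence to $\iint \sgn(x-y)g(x)d\mu(x)d\mu(y)$. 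Thus the entire task reduces to showing that the error term tends to zero.

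Using $|W'_{\epsilon_k}|\le 1$, the error is bounded in absolute value by $\int_{\R}|g^k-g|\,d\mu^k$, and I would control this by the standard local-uniform-plus-tail split. For $R\ge 1$,
\begin{equation*}
\int_{\R}|g^k-g|\,d\mu^k \;\le\; \sup_{|x|\le R}|g^k(x)-g(x)| \;+\; \int_{|x|>R}\bigl(|g^k|+|g|\bigr)d\mu^k.
\end{equation*}
The first piece vanishes as $k\to\infty$ by the hypothesized local uniform convergence. For the second piece, the uniform linear bound \eqref{uniformLinearGrowthGeeKay} (inherited by $g$) and the elementary inequality $1+|x|\le 2x^2/R$ on $\{|x|>R\}$ yield
\begin{equation*}
\int_{|x|>R}(|g^k|+|g|)\,d\mu^k \;\le\; 2C\int_{|x|>R}(1+|x|)\,d\mu^k \;\le\; \frac{4C}{R}\int_{\R}x^2 d\mu^k.
\end{equation*}
Since \eqref{myookayConvStrong} implies $\sup_k\int x^2 d\mu^k<\infty$, this tail can be made arbitrarily small uniformly in $k$ by taking $R$ large. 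Choosing $R$ first and then sending $k\to\infty$ shows $\int|g^k-g|\,d\mu^k\to 0$, finishing the proof.

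There is no genuine obstacle here; the only point requiring a moment of care is uniform integrability of $1+|x|$ against $(\mu^k)$, which is handed to us by the second-moment convergence assumption \eqref{myookayConvStrong}. Without that hypothesis, the linearly growing $g^k-g$ could place too much mass at infinity to be absorbed, but under \eqref{myookayConvStrong} the tail estimate above closes the argument cleanly.
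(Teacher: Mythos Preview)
Your argument is correct and follows essentially the same route as the paper: split $g^k = g + (g^k - g)$, apply the preceding lemma to the $g$ piece, and control the remainder via $|W'_{\epsilon_k}|\le 1$ together with a local-uniform-plus-tail decomposition driven by uniform integrability of $1+|x|$ under $(\mu^k)$. The only cosmetic difference is that the paper phrases the tail control abstractly via a compact set $K_\delta$, whereas you give the explicit bound $(4C/R)\sup_k\int x^2\,d\mu^k$; both are the same idea.
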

\begin{proof}
As $g^k(x)\rightarrow g(x)$ for each $x\in \R$, we also have 
$$
\sup_{x\in \R}\frac{|g(x)|}{1+|x|}\le C.
$$
 Fix $\delta>0$ and choose a compact interval $K_\delta\subset \R$ such that 
\be\label{ChoiceofConstantMyooKay}
\int_{\R\setminus K_\delta}(1+|x|)d\mu^k(x)\le \frac{\delta}{2C} 
\ee
for $k\in \N$; such an interval exists as $1+|x|$ is uniformly integrable with respect to $(\mu^k)_{k\in \N}$ by assumption \eqref{myookayConvStrong}.   In view of Lemma \ref{SubdiffLemma}, 
\begin{align*}
\iint_{\R^2}W'_{\epsilon_k}(x-y)g^k(x)d\mu^k(x)d\mu^k(y)&=\int_{\R}(W'_{\epsilon_k}*\mu^k)g^kd\mu^k \\
&=\int_{\R}(W'_{\epsilon_k}*\mu^k)gd\mu^k +\int_{\R}(W'_{\epsilon_k}*\mu^k)(g^k-g)d\mu^k\\
&=\int_{\R}(\textup{\sgn}*\mu) gd\mu + o(1)+\int_{\R}(W'_{\epsilon_k}*\mu^k)(g^k-g)d\mu^k\\
&=\int_{\R}(\textup{\sgn}*\mu) gd\mu + o(1)+\int_{K_\delta}(W'_{\epsilon_k}*\mu^k)(g^k-g)d\mu^k\\
&\hspace{1in}+\int_{\R\setminus K_\delta}(W'_{\epsilon_k}*\mu^k)(g^k-g)d\mu^k\\
&=\iint_{\R^2}\textup{\sgn}(x-y)g(x)d\mu(x)d\mu(y) + o(1)\\
&\quad +\int_{K_\delta}(W'_{\epsilon_k}*\mu^k)(g^k-g)d\mu^k+\int_{\R\setminus K_\delta}(W'_{\epsilon_k}*\mu^k)(g^k-g)d\mu^k
\end{align*}
as $k\rightarrow\infty$.

\par Observe  
$$
\left|\int_{K_\delta}(W'_{\epsilon_k}*\mu^k)(g^k-g)d\mu^k\right|\le \max_{K_\delta}|g^k-g|.
$$
And by \eqref{ChoiceofConstantMyooKay},
$$
\left|\int_{\R\setminus K_\delta}(W'_{\epsilon_k}*\mu^k)(g^k-g)d\mu^k\right|\le 2C\int_{\R\setminus K_\delta}(1+|x|)d\mu^k(x)\le \delta.
$$
As a result, 
$$
\limsup_{k\rightarrow\infty}\left|\iint_{\R^2}W'_{\epsilon_k}(x-y)g^k(x)d\mu^k(x)d\mu^k(y)-\iint_{\R^2}\textup{\sgn}(x-y)g(x)d\mu(x)d\mu(y)\right|\le \delta. 
$$
The claim follows as $\delta>0$ was arbitrarily chosen. 
\end{proof}

\subsection{Solution of the flow equation}
This subsection is dedicated to the proof of Theorem \ref{secondThm}. Here, we will show that the mapping $X$ obtained in Proposition \ref{XepsKayCompactness} is 
a solution flow equation \eqref{FlowMapEqnEP} which has all of the required properties. First note that since $X^{\epsilon_k}(0)=\id$ and $X^{\epsilon_k}(0)\rightarrow X(0)$
in $L^2(\rho_0)$ as $k\rightarrow\infty$, then $X(0)=\id$.  Next we claim that $X$ satisfies flow equation \eqref{FlowMapEqnEP}. It suffices to let $0\le s\le t$, fix a continuous $h:\R\rightarrow \R$ which satisfies 
$$
\sup_{x\in \R}\frac{|h(x)|}{1+|x|}\le C,
$$
and show
\be\label{SolnGoalEP}
\int^t_s\int_{\R}\dot X(\tau) h(X(\tau))d\rho_0d\tau=\int^t_s\int_{\R}\left[v_0-\displaystyle\int^\tau_0(\sgn*\rho_\xi)(X(\xi))d\xi\right]h(X(t))d\rho_0d\tau.
\ee
Once we establish this identity, parts $(i), (ii)$, and $(iii)$ of  Theorem \ref{secondThm} would follow by minor variations of the arguments we gave in our proof of 
Theorem \ref{mainThm}.

\par To this end, we recall that for each $k\in N$, 
\be\label{solnconditionXepskay}
\int^t_s\int_{\R}\dot X^{\epsilon_k}(\tau) h(X^{\epsilon_k}(\tau))d\rho_0d\tau=\int^t_s\int_{\R}\left[v_0-\displaystyle\int^\tau_0(W_{\epsilon_k}'*\rho^{\epsilon_k}_\xi)(X^{\epsilon_k}(\xi))d\xi\right]h(X^{\epsilon_k}(\tau))d\rho_0d\tau.
\ee
Moreover, Proposition \ref{XepsKayCompactness} implies 
\be\label{FinalRhoNarrowConv}
\rho_t=X(t)_{\#}\rho_0=\lim_{k\rightarrow\infty}X^{\epsilon_k}(t)_{\#}\rho_0=\lim_{k\rightarrow\infty}\rho^{\epsilon_k}_t
\ee
narrowly and 
\be\label{FinalRhoStrongConv}
\lim_{k\rightarrow\infty}\int_{\R}x^2d\rho^{\epsilon_k}_t(x)=\lim_{k\rightarrow\infty}\int_{\R}(X^{\epsilon_k}(t))^2d\rho_0=\int_{\R}(X(t))^2d\rho_0=\int_{\R}x^2d\rho_t(x)
\ee
for each $t\ge 0$.

\par Proposition \ref{XepsKayCompactness} also can be used to show 
\be
\lim_{k\rightarrow\infty}\int^t_s\int_{\R} v_0 h(X^{\epsilon_k}(\tau))d\rho_0d\tau=\int^t_s\int_{\R}v_0 h(X(t))d\rho_0d\tau.
\ee
Furthermore, 
\be
\lim_{k\rightarrow\infty}\int^t_s\int_{\R}\dot X^{\epsilon_k}(\tau) h(X^{\epsilon_k}(\tau))d\rho_0d\tau=\int^t_s\int_{\R}\dot X(\tau) h(X(\tau))d\rho_0d\tau
\ee
as noted in Corollary \ref{WeakconvDerXepsKay}. As a result, we are left to justify the limit 
\begin{align}\label{FinalSolutionLimitXepsKay}
&\lim_{k\rightarrow\infty}\int^t_s\int_{\R}\left(\int^\tau_0(W_{\epsilon_k}'*\rho^{\epsilon_k}_\xi)(X^{\epsilon_k}(\xi))d\xi \right)h(X^{\epsilon_k}(\tau))d\rho_0d\tau\\
&\hspace{1in}=
\int^t_s\int_{\R}\left(\int^\tau_0(\sgn*\rho_\xi)(X(\xi))d\xi \right)h(X(\tau))d\rho_0d\tau.
\end{align}
Then we would be able to send $k\rightarrow \infty$ in \eqref{solnconditionXepskay} to conclude \eqref{SolnGoalEP}.

\par So we will now focus on establishing \eqref{FinalSolutionLimitXepsKay}. Observe 
\begin{align}
&\int^t_s\int_{\R}\left(\int^\tau_0(W_{\epsilon_k}'*\rho^{\epsilon_k}_\xi)(X^{\epsilon_k}(\xi))d\xi \right)h(X^{\epsilon_k}(\tau))d\rho_0d\tau\\
&\hspace{1in}=\int^t_s\int^\tau_0\left[\int_{\R}(W_{\epsilon_k}'*\rho^{\epsilon_k}_\xi)(X^{\epsilon_k}(\xi)) h(X^{\epsilon_k}(\tau))d\rho_0\right]d\xi d\tau.
\end{align}
Since $W'_{\epsilon_k}$ is uniformly bounded and $h$ grows at most linearly, we just need to show 
\be\label{FinalSolutionLimitXepsKayToo}
\lim_{k\rightarrow\infty}\int_{\R}(W_{\epsilon_k}'*\rho^{\epsilon_k}_\xi)(X^{\epsilon_k}(\xi)) h(X^{\epsilon_k}(\tau))d\rho_0=
\int_{\R}(\sgn*\rho_\xi)(X(\xi)) h(X(\tau))d\rho_0
\ee
for each $\xi,\tau> 0$ with $\xi\le \tau$. For if \eqref{FinalSolutionLimitXepsKayToo} holds, \eqref{FinalSolutionLimitXepsKay} would follow from a simple application of the dominated convergence theorem.

\par In view of \eqref{SemiGroupPropXeps},  
\begin{align}
\int_{\R}(W_{\epsilon_k}'*\rho^{\epsilon_k}_\xi)(X^{\epsilon_k}(\xi)) h(X^{\epsilon_k}(\tau))d\rho_0&=\int_{\R}(W_{\epsilon_k}'*\rho^{\epsilon_k}_\xi)(X^{\epsilon_k}(\xi))\; h\circ f_{\tau,\xi}^{\epsilon_k}(X^{\epsilon_k}(\xi))d\rho_0\\
&=\int_{\R}(W_{\epsilon_k}'*\rho^{\epsilon_k}_\xi)\; h\circ f_{\tau,\xi}^{\epsilon_k} d\rho^{\epsilon_k}_\xi\\
&=\iint_{\R^2}W_{\epsilon_k}'(x-y)\; h\circ f_{\tau,\xi}^{\epsilon_k}(y) d\rho^{\epsilon_k}_\xi(x)d\rho^{\epsilon_k}_\xi(y).
\end{align}
By part $(iii)$ of Proposition \ref{XepsKayCompactness}, $f_{\tau,\xi}^{\epsilon_k}\rightarrow f_{\tau,\xi}$ locally uniformly on $\R$  (up to a subsequence that we will not relabel) and
$$
X(\tau)=f_{\tau,\xi}(X(\xi))
$$
$\rho_0$ almost everywhere. It follows that $h\circ f_{\tau,\xi}^{\epsilon_k}$ converges locally uniformly to $h\circ f_{\tau,\xi}$.  We also have the limits
\eqref{FinalRhoNarrowConv} and \eqref{FinalRhoStrongConv} for each $t=\xi$. We can then apply Corollary \ref{SubdifferentialConProp} once 
we know $h\circ f_{\tau,\xi}^{\epsilon_k}(y)$ grows at most linearly in $|y|$ in a uniform way.

\par Fix $z_0\in\text{supp}(\rho_0)$ and observe 
\begin{align}
|h\circ f_{\tau,\xi}^{\epsilon_k}(y)|&\le C(1+|f_{\tau,\xi}^{\epsilon_k}(y)|)\\
&\le C\left(1+|f_{\tau,\xi}^{\epsilon_k}(y)-f_{\tau,\xi}^{\epsilon_k}(X^{\epsilon_k}(z_0,\xi))|+|f_{\tau,\xi}^{\epsilon_k}(X^{\epsilon_k}(z_0,\xi))|\right)\\
&\le C\left(1+\frac{\tau}{\xi}|y-X^{\epsilon_k}(z_0,\xi)|+|X^{\epsilon_k}(z_0,\tau)|\right)
\end{align}
for all $y\in\R$. Since $(X^{\epsilon_k}(z_0,\xi), X^{\epsilon_k}(z_0,\tau))\rightarrow (X(z_0,\xi), X(z_0,\tau))$ as $k\rightarrow\infty$, it must be that
\be
\sup_{k\in \N}\left\{\sup_{y\in \R}\frac{|h\circ f_{\tau,\xi}^{\epsilon_k}(y)|}{1+|y|}\right\}<\infty.
\ee
Corollary \ref{SubdifferentialConProp} then gives
\begin{align}
\lim_{k\rightarrow\infty}\int_{\R}(W_{\epsilon_k}'*\rho^{\epsilon_k}_\xi)(X^{\epsilon_k}(\xi)) h(X^{\epsilon_k}(\tau))d\rho_0&=\lim_{k\rightarrow\infty}\iint_{\R^2}W_{\epsilon_k}'(x-y)\; h\circ f_{\tau,\xi}^{\epsilon_k}(y) d\rho^{\epsilon_k}_\xi(x)d\rho^{\epsilon_k}_\xi(y)\\
&=\iint_{\R^2}\sgn(x-y)\; h\circ f_{\tau,\xi}(y) d\rho_\xi(x)d\rho_\xi(y)\\
&=\int_{\R}(\sgn*\rho_\xi)(X(\xi))\; h\circ f_{\tau,\xi}(X(\xi))d\rho_0\\
&=\int_{\R}(\sgn*\rho_\xi)(X(\xi))\; h(X(\tau))d\rho_0.
\end{align}
We conclude \eqref{FinalSolutionLimitXepsKayToo} and in turn that $X$ is a solution of the flow equation \eqref{FlowMapEqnEP}.

\subsection{Solving the Euler-Poisson equations }
Weak solution pairs of the Euler-Poisson system \eqref{EP1D} which satisfy given initial conditions \eqref{Init} are defined as follows. 
\begin{defn}\label{EPWeakSolnDefn}
A narrowly continuous $\rho: [0,\infty)\rightarrow {\cal P}(\R); t\mapsto \rho_t$ and a Borel measurable
$v:\R\times[0,\infty)\rightarrow\R$ is a {\it weak solution pair of the Euler-Poisson equations \eqref{EP1D}}
which satisfies the initial conditions \eqref{Init} if the following hold. \\
$(i)$ For each $T>0$,
$$
\int^T_0\int_{\R}v^2d\rho_t dt<\infty.
$$ 
$(ii)$ For each $\phi\in C^\infty_c(\R\times[0,\infty))$,
$$
\int^\infty_0\int_{\R}(\partial_t\phi+v\partial_x\phi)d\rho_tdt+\int_{\R}\phi(\cdot,0)d\rho_0=0.
$$
$(iii)$ For each $\phi\in C^\infty_c(\R\times[0,\infty))$,
$$
\int^\infty_0\int_{\R}(v\partial_t\phi +v^2\partial_x\phi)d\rho_tdt+\int_{\R}\phi(\cdot,0)v_0d\rho_0=\int^\infty_0\int_{\R}\phi(\sgn*\rho_t)d\rho_tdt.
$$
\end{defn}

Employing the same method used to prove Corollary \ref{PEEexistCor} from Theorem \ref{mainThm}, we have the subsequent corollary to Theorem \ref{secondThm}.

\begin{cor}
There exists a weak solution pair $\rho$ and $v$ of the Euler-Poisson equations \eqref{EP1D} which satisfies the initial conditions \eqref{Init}.  Moreover, this solution pair additionally has the following features.  
\begin{enumerate}[(i)]

\item For almost every $t,s\ge 0$ with $0\le s\le t$, 
\begin{align*}
&\int_{\R}\frac{1}{2}v(x,t)^2d\rho_t(x)+\iint_{\R^2}\frac{1}{2}|x-y|d\rho_t(x)d\rho_t(y)\\
&\hspace{1in} \le \int_{\R}\frac{1}{2}v(x,s)^2d\rho_s(x)+\iint_{\R^2}\frac{1}{2}|x-y|d\rho_s(x)d\rho_s(y).
\end{align*}

\item For almost every $t>0$ and $\rho_t$ almost every $x,y\in \R$,
\be
(v(x,t)-v(y,t))(x-y)\le \frac{1}{t}(x-y)^2.
\ee
\end{enumerate}
\end{cor}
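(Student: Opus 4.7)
The plan is to imitate the proof of Corollary \ref{PEEexistCor} essentially verbatim, with the trajectory map $X:[0,\infty)\to L^2(\rho_0)$ supplied by Theorem \ref{secondThm} in place of the one from Theorem \ref{mainThm}, and with $\sgn$ replacing $W'$ throughout. Since the entire machinery used for the pressureless Euler case carries over once the integrand $v_0-\int_0^t(W'*\rho_s)(X(s))\,ds$ is replaced by $v_0-\int_0^t(\sgn*\rho_s)(X(s))\,ds$, almost every step reduces to a bookkeeping exercise; the substantive work is only in the energy and entropy statements.

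First I would extract $v$. Define the signed Borel measure
\[
\nu(S):=\int_0^\infty\!\!\int_\R\Bigl[v_0-\int_0^t(\sgn*\rho_s)(X(s))\,ds\Bigr]\chi_S(X(t),t)\,d\rho_0\,dt
\]
and $\mu(S):=\int_0^\infty\!\int_\R\chi_S(X(t),t)\,d\rho_0\,dt$ on $\R\times[0,\infty)$. Since $\sgn$ is bounded by $1$, $\nu$ is finite on sets where $\mu$ is, and $\nu\ll\mu$, so the Radon--Nikodym theorem supplies a Borel $v:\R\times[0,\infty)\to\R$ with $\nu=v\mu$. Testing against continuous $\phi$ of compact support and invoking the Euler--Poisson flow equation \eqref{FlowMapEqnEP} then yields $\dot X(t)=v(X(t),t)$ $\rho_0$-a.e.\ for a.e.\ $t>0$. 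Integrability condition $(i)$ of Definition \ref{EPWeakSolnDefn} then follows at once from $\int_0^T\int v^2\,d\rho_t\,dt=\int_0^T\int\dot X(t)^2\,d\rho_0\,dt<\infty$, using the Lipschitz continuity of $X$ in $L^2(\rho_0)$.

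Conditions $(ii)$ and $(iii)$ of Definition \ref{EPWeakSolnDefn} follow by the identical chain-rule plus integration-by-parts calculation as in Corollary \ref{PEEexistCor}: for $\phi\in C_c^\infty(\R\times[0,\infty))$, write $\partial_t\phi+v\partial_x\phi$ along the trajectory $X(t)$ as $\tfrac{d}{dt}\phi(X(t),t)$, use Fubini, and telescope. For $(iii)$, multiply by $\dot X(t)$, invoke \eqref{FlowMapEqnEP} to exchange $\dot X$ with the representation $v_0-\int_0^t(\sgn*\rho_s)(X(s))\,ds$, and integrate by parts in $t$ to produce the term $\int_0^\infty\!\int_\R\phi(\sgn*\rho_t)\,d\rho_t\,dt$. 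Conclusion $(i)$ of the corollary is then the image under $X(t)$ of conclusion $(i)$ of Theorem \ref{secondThm}, using the push-forward identity $W(x)=|x|$.

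For conclusion $(ii)$, the entropy inequality, let $Q\subset\R$ be the $\rho_0$-full-measure Borel set of $y$ for which $t\mapsto X(y,t)$ is locally absolutely continuous and $\dot X(y,t)=v(X(y,t),t)$ a.e. By conclusion $(iii)$ of Theorem \ref{secondThm},
\[
0\ge\frac{d}{dt}\frac{(X(y,t)-X(z,t))^2}{t^2}=\frac{2}{t^2}\Bigl[(X(y,t)-X(z,t))(v(X(y,t),t)-v(X(z,t),t))-\tfrac{1}{t}(X(y,t)-X(z,t))^2\Bigr]
\]
for a.e.\ $t>0$ and all $y,z\in Q$, which is the desired inequality for $x=X(y,t), y=X(z,t)\in X(t)(Q)$. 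The main obstacle, as in the pressureless case, is verifying that this inequality then holds for $\rho_t$-a.e.\ pair $(x,y)$: one needs to know $X(t)(Q)$ is Borel and carries full $\rho_t$ mass. Taking $Q$ to be an $F_\sigma$ (possible by Theorem 1.19 of \cite{Folland}) and invoking the continuity of $X(t)$ on $\R$ from Proposition \ref{XepsKayCompactness}$(i)$ makes $X(t)(Q)$ a countable union of compacta, hence Borel; then $\rho_t(X(t)(Q))\ge\rho_0(X(t)^{-1}(X(t)(Q)))\ge\rho_0(Q)=1$, exactly as in the final step of Corollary \ref{PEEexistCor}.
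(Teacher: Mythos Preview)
Your proposal is correct and follows precisely the approach the paper takes: the paper's own proof consists of the single sentence ``Employing the same method used to prove Corollary \ref{PEEexistCor} from Theorem \ref{mainThm}, we have the subsequent corollary to Theorem \ref{secondThm},'' and you have carried out exactly that transcription, replacing $W'$ by $\sgn$ and the trajectory map of Theorem \ref{mainThm} by that of Theorem \ref{secondThm}. One cosmetic point: in your final line you write $\rho_t(X(t)(Q))\ge\rho_0(X(t)^{-1}(X(t)(Q)))$, but by the definition of push-forward this is an equality; the inequality is only in the second step $\rho_0(X(t)^{-1}(X(t)(Q)))\ge\rho_0(Q)$.
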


\bibliography{PEbib}{}

\bibliographystyle{plain}

\typeout{get arXiv to do 4 passes: Label(s) may have changed. Rerun}

\end{document}